\renewcommand\a{\alpha}
\renewcommand\b{\beta}
\newcommand\g{\gamma}
\renewcommand\d{\delta}
\newcommand\la{\lambda}
\newcommand\s{\sigma}
\newcommand\vf{\varphi}
\renewcommand\t{\tau}
\newcommand\w{\omega}
\newcommand\vL{\varLambda}
\newcommand\vG{\varGamma}
\newcommand\ve{\varepsilon}
\newcommand{\QQ}{\mathbb Q}
\newcommand{\ZZ}{\mathbb Z}
\newcommand\BP{\mathbf P}
\newcommand\Bp{\mathbf p}
\newcommand\Bm{\mathbf m}
\newcommand\bx{\mathbf{x}}
\newcommand\bX{\mathbf{X}}
\newcommand\CA{\mathcal{A}}
\newcommand\CB{\mathcal{B}}
\newcommand\CL{\mathcal{L}}
\newcommand\CK{\mathcal{K}}
\newcommand\CF{\mathcal{F}}
\newcommand\CT{ \mathcal{T}}
\newcommand\FS{\mathfrak S}
\newcommand\Fg{\mathfrak g}
\newcommand\Fgl{\mathfrak{gl}}
\newcommand\wt{\widetilde}
\newcommand\ol{\overline}
\newcommand{\lan}{\langle}
\newcommand{\ran}{\rangle}
\newcommand{\ra}{\rightarrow }
\newcommand\Hom{\operatorname{Hom}}
\newcommand\End{\operatorname{End}}
\newcommand\Res{\operatorname{Res}}
\newcommand\ch{\operatorname{ch}}
\newcommand{\rad}{\operatorname{rad}}
\newcommand{\cmod}{\operatorname{-mod}}
\newcommand{\LR}{\operatorname{LR}}
\newcommand{\isom}{\,\raise2pt\hbox{$\underrightarrow{\sim}$}\,}
\newcounter{ichi}
\newcommand{\roi}{\roman{ichi}}
\newcounter{ni}
\newcommand{\roii}{\roman{ni}}
\newcounter{san}
\newcommand{\roiii}{\roman{san}}
\newcounter{yon}
\newcommand{\roiv}{\roman{yon}}
\newcounter{go}
\newcounter{roku}
\newcounter{nana}
\newcommand{\Sc}{\mathscr{S}}
\newcommand{\He}{\mathscr{H}}
\newcommand{\bo}[1]{
\begin{array}{|c|}
\hline 
#1
\\
\hline
\end{array}
}
\newtheorem{thm}{Theorem}[section]
\newtheorem{lem}[thm]{Lemma}
\newtheorem{cor}[thm]{Corollary}
\newtheorem{prop}[thm]{Proposition}
\def \para{\refstepcounter{thm} \par\medskip\noindent
                \textbf{\thethm .} }
\def \remark{\refstepcounter{thm} \par\medskip\noindent
                \textbf{Remark \thethm .} }
\def \remarks{\refstepcounter{thm} \par\medskip\noindent
                \textbf{Remarks \thethm .} }
\def \example{\refstepcounter{thm} \par\medskip\noindent
                \textbf{Example \thethm .} }
\numberwithin{equation}{thm}
\begin{document}
\setlength{\baselineskip}{4.9mm}
\setlength{\abovedisplayskip}{4.5mm}
\setlength{\belowdisplayskip}{4.5mm}
\renewcommand{\theenumi}{\roman{enumi}}
\renewcommand{\labelenumi}{(\theenumi)}
\renewcommand{\thefootnote}{\fnsymbol{footnote}}
\renewcommand{\thefootnote}{\fnsymbol{footnote}}
\parindent=20pt
\newcommand{\dis}{\displaystyle}

\medskip
\begin{center}
{\large \bf On  Weyl modules of cyclotomic $q$-Schur algebras} 
\\
\vspace{1cm}
Kentaro Wada\footnote{This research was supported by GCOE \lq Fostering top leaders in mathematics', Kyoto University.}

\address{Research Institute for Mathematical Sciences, Kyoto University, Kyoto 606-8502, JAPAN}
\email{wada@kurims.kyoto-u.ac.jp} 

\end{center}

\title{}
\maketitle 
\markboth{Kentaro Wada}
{On Weyl modules of cyclotomic $q$-Schur algebras}




\begin{abstract}
We study on  Weyl modules of cyclotomic $q$-Schur algebras. 
In particular, 
we give the character formula of the Weyl modules 
by using the Kostka numbers 
and 
some numbers which are computed by a generalization of Littlewood-Richardson rule. 
We also study corresponding symmetric functions. 
Finally, 
we give some simple applications to modular representations of cyclotomic $q$-Schur algebras. 
\end{abstract}


\setcounter{section}{-1}

\section{Introduction} 
Let
$_R \He_{n,r}$ be the Ariki-Koike algebra over a commutative ring $R$ 
with parameters $q,Q_1,\cdots,Q_r \in R$ 
associated to the complex reflection group 
$\FS_{n} \ltimes (\ZZ/r \ZZ)^n$, 
and 
let 
$_R \Sc_{n,r}$ be the cyclotomic $q$-Schur algebra 
associated to $_R \He_{n,r}$ 
introduced by \cite{DJM98}. 
Put $\CA=\ZZ[q,q^{-1}, Q_1,\cdots, Q_r]$,  
where $q,Q_1,\cdots, Q_r$ are indeterminate,    
and 
$\CK =\QQ(q,Q_1,\cdots,Q_r)$ 
is the quotient field of $\CA$ 
(In this introduction, we omit the subscript $\CK$ when we consider an algebra over $\CK$). 

In the case where $r=1$, 
$_R \He_{n,1}$ is the Iwahori-Hecke algebra associated to the symmetric group $\FS_n$, 
and 
$_R \Sc_{n,1}$ is the $q$-Schur algebra associated to $_R \He_{n,1}$.  
In this case, 
the $q$-Schur algebra $_R \Sc_{n,1}$ comes from the Schur-Weyl duality between 
$_R \He_{n,1}$ and the quantum group $_R U_q(\Fgl_m)$ as follows. 
Let $\Fgl_m$ be the general linear Lie algebra,  
and $U_q(\Fgl_m)$ be the corresponding quantum group over $\CK$. 
We consider the vector representation $V$ of $U_q(\Fgl_m)$, 
then $U_q(\Fgl_m)$ acts on the tensor space $V^{\otimes n}$ via coproduct of $U_q(\Fgl_m)$. 
$\He_{n,1}$ also acts on the tensor space $V^{\otimes n}$ 
by a $q$-analogue of the permutations for the ingredient of the tensor product. 
Then the Schur-Weyl duality between $U_q(\Fgl_m)$ and $\He_{n,1}$ holds via this tensor space $V^{\otimes n}$ 
by \cite{J}. 
Moreover, this Schur-Weyl duality also holds even over  $\CA$ (see \cite{Du}). 
Hence, we can specialize to any ring $R$ with a parameter $q \in R^\times$. 
Then 
the $q$-Schur algebra $_R \Sc_{n,1}$ coincides with the image of  $_R U_q(\Fgl_m) \ra \End(V^{\otimes n})$ 
which comes from the action of $_RU_q(\Fgl_m)$ on $V^{\otimes n}$.

On the other hand, 
in the case where $r \geq 2$, 
it is also known the Schur-Weyl duality by \cite{SakS}. 
Let $\Fg=\Fgl_{m_1} \oplus \cdots \oplus \Fgl_{m_r}$ 
be a Levi subalgebra of $\Fgl_m$, 
and 
$U_q(\Fg)$ be the corresponding quantum group over $\CK$. 
$U_q(\Fg)$ acts on $V^{\otimes n}$ by the restriction of the action of $U_q(\Fgl_m)$.  
We can also define the action of $\He_{n,r}$ on $V^{\otimes n}$ 
which is a generalization of the action of $\He_{n,1}$. 
Then 
$U_q(\Fg)$ and $\He_{n,r}$ satisfy the Schur-Weyl duality via the tensor space $V^{\otimes n}$ 
by \cite{SakS}. 
Unfortunately, 
this Schur-Weyl duality does not hold over $\CA$ 
since the action of $\He_{n,r}$ on $V^{\otimes n}$ 
is not defined over $\CA$. 
However, 
we can replace $\He_{n,r}$ with the modified Ariki-Koike algebra $_R \He_{n,r}^0$ over $R$ with parameters $q,Q_1,\cdots,Q_r$ 
such that $Q_i - Q_j$ ($i \not=j$) is invertible in $R$ 
which was introduced by \cite{Sho}. 
Then, 
the Schur-Weyl duality 
between $_R U_q(\Fg)$ and $_R \He_{n,r}^0$ holds via the tensor space $V^{\otimes n}$ (see \cite{SS}). 
Let $_R \Sc_{n,r}^0$ be the image of  $_R U_q(\Fg) \ra \End(V^{\otimes n})$ 
which comes from the action of $_RU_q(\Fg)$ on $V^{\otimes n}$. 
Then 
some relations between 
$_R \Sc_{n,r}^0$ and $_R \Sc_{n,r}$ 
are studied in \cite{SS} and \cite{Saw}. 
In particular, 
$_R \Sc_{n,r}^0$ is realized as a subquotient algebra of $_R \Sc_{n,r}$.  
Then, some decomposition numbers of $_R \Sc_{n,r}$ coincide with the decomposition numbers of $_R \Sc_{n,r}^0$ 
(which are also decomposition number for $_R U_q(\Fg)$) when $R$ is a field. 
In \cite{SW}, 
we obtained a certain generalization of  
these results (see also Remark \ref{remark product formula}). 
Motivated by this generalization together with the Schur-Weyl duality 
between $_R U_q(\Fg)$ and $_R \He_{n,r}^0$, 
the author gave a presentation of $\Sc_{n,r}$ (also $_\CA \Sc_{n,r}$) 
by generators and fundamental relations in \cite{W}. 
By using this presentation, 
we can define a (not surjective) homomorphism 
$\Phi_\Fg : U_q(\Fg) \ra \Sc_{n,r}$. 
We also have $\Phi_{\Fg}|_{_\CA U_q(\Fg)} : \, _\CA U_q(\Fg) \ra \, _\CA \Sc_{n,r}$ by restriction. 
Thus we can specialize it to any commutative ring $R$ and parameters $q,Q_1.\cdots,Q_r \in R$.  
In this paper, 
we study $_R \Sc_{n,r}$-modules by restricting the action to $_R U_q(\Fg)$ when $R$ is a field. 

First, 
we consider over $\CK$. 
In this case, 
$\Sc_{n,r}$ is semi-simple, 
and 
finite dimensional $U_q(\Fg)$-module is also semi-simple. 
Put $\vL_{n,r}^+=\{\la=(\la^{(1)}, \cdots,\la^{(r)}) \,|\, \la^{(k)}$: partition, $\sum_{k=1}^r |\la^{(k)}|=n\}$, 
the set of $r$-partitions of size $n$. 
Let   
$W(\la)$ 
be the Weyl module of $\Sc_{n,r}$ corresponding to $\la \in \vL_{n,r}^+$. 
It is well known that 
$\{W(\la)\,|\, \la \in \vL_{n,r}^+\}$ 
gives a complete set of non-isomorphic simple $\Sc_{n,r}$-modules. 
By investigating the appearing weights, 
we see that 
$\{W(\la^{(1)}) \boxtimes \cdots \boxtimes W(\la^{(r)}) \,|\, \la \in \vL_{n,r}^+\}$ 
gives a complete set of non-isomorphic simple $U_q(\Fg)$-modules 
which appear as $U_q(\Fg)$-submodules of  $\Sc_{n,r}$-modules through the homomorphism $\Phi_{\Fg}$, 
where 
$W(\la^{(k)})$ is the Weyl module of $U_q(\Fgl_{m_k})$ with the highest weight $\la^{(k)}$.
Then we can consider the irreducible decomposition 
of the Weyl module $W(\la)$ of $\Sc_{n,r}$ as $U_q(\Fg)$-modules 
through the homomorphism $\Phi_{\Fg}$ as follows:  
\[
\tag{0.1}  
W(\la) \cong \bigoplus_{\mu \in \vL_{n,r}^+} \Big( W(\mu^{(1)}) \boxtimes \cdots \boxtimes W(\mu^{(r)}) \Big)^{\oplus \b_{\la\mu}} 
\text{ as $U_q(\Fg)$-modules} .
\] 
In order to compute the multiplicity $\b_{\la\mu}$ in this decomposition, 
we describe the $U_q(\Fg)$-crystal structure on $W(\la)$ 
by using a generalization of \lq\lq admissible reading" for $U_q(\Fgl_m)$-crystal given in \cite{KN} 
(Theorem \ref{Theorem b_la_mu}). 
As a consequence, 
we can compute the multiplicity $\b_{\la\mu}$ by the combinatorial way 
which can be regarded as a generalization of the Littlewood-Richardson rule 
(Corollary \ref{Corollary b_la_mu}. See also Remark \ref{Remark LR rule}). 

Thanks to the decomposition (0.1), 
we obtain the character formula of $W(\la)$
by using  Kostka numbers and multiplicities $\b_{\la\mu}$ ($\la,\mu \in \vL_{n,r}^+$) 
(Note that the weight space as the $\Sc_{n,r}$-module coincides with the weight space as the $U_q(\Fg)$-module 
from the homomorphism $\Phi_{\Fg}$).  
We also describe the  character of $W(\la)$ 
as a linear combination of the products of Schur polynomials with coefficients $\b_{\la\mu}$ ($\la,\mu \in \vL_{n,r}^+$). 
Moreover, 
we see that 
the set of characters of the Weyl modules for all $r$-partitions 
gives a new basis of the ring of symmetric polynomials 
(Theorem \ref{Theorem character of Weyl}). 
Then we also study on some properties for such symmetric functions.

As an application of  the decomposition (0.1), 
we have a certain factorization of decomposition matrix of $_R \Sc_{n,r}$ when $R$ is a field 
(Theorem \ref{Theorem factorization decom matrix}), 
and we give an alternative proof of the product formula for decomposition numbers of $_R \Sc_{n,r}$ 
given by \cite{Saw} (Corollary \ref{Corollary prod formula}, See also Remark \ref{remark product formula}.). 
For special parameters ($Q_1=\cdots=Q_r=0$ or $q=1$, $Q_1=\cdots =Q_r$), 
we can compute the decomposition matrix of $_R \Sc_{n,r}$ 
from the  factorization of decomposition matrix (Corollary \ref{Corollary Q_1=Q_2= cdots =Q_r=0}).

Finally, 
we realize the Ariki-Koike algebra $_R \He_{n,r}$ 
as a subalgebra of $_R \Sc_{n,r}$ by using the generators of $_R \Sc_{n,r}$ 
(Proposition \ref{Proposition AK in Sc}), 
and we give an alternative proof for the classification of simple $\He_{n,r}$-modules 
for special parameters ($Q_1=\cdots=Q_r=0$ or $q=1$, $Q_1=\cdots =Q_r$) 
which has already obtained by \cite{AM} and \cite{M} (Corollary \ref{Corollary simple AK}). 
\\

\textbf{ Acknowledgments :}
The author is grateful to Professors 
S. Ariki, H. Miyachi and T. Shoji for 
many valuable discussions and comments.



\section{Review on cyclotomic $q$-Schur algebras} 
In this section, we recall the definition of the cyclotomic $q$-Schur algebra $\Sc_{n,r}$ introduced by \cite{DJM98}, 
and 
we review  presentations of $\Sc_{n,r}$ by generators and fundamental relations given by \cite{W}.

\para 
\label{Definition Ariki-Koike}
Let $R$ be a commutative ring,  
and 
we take parameters 
$q,Q_1,\cdots, Q_r \in R$ 
such that 
$q$ is invertible in $R$. 
The Ariki-Koike algebra $_R \He_{n,r}$ associated to 
the complex reflection group  
$\FS_n \ltimes (\ZZ/r\ZZ)^n$ 
is the associative algebra with  1 over $R$ 
generated by 
$T_0,T_1,\cdots,T_{n-1}$ 
with the following defining relations: 
\begin{align*}
&(T_0-Q_1)(T_0-Q_2)\cdots (T_0-Q_r)=0, \\
&(T_i-q)(T_i+q^{-1})=0 &&(1 \leq i \leq n-1),\\
&T_0 T_1 T_0 T_1=T_1 T_0 T_1 T_0, \\
&T_i T_{i+1} T_i = T_{i+1} T_i T_{i+1} &&(1\leq i \leq n-2),\\
&T_i T_j=T_j T_i &&(|i-j|\geq 2). 
\end{align*}

The subalgebra of 
$_R \He_{n,r}$ 
generated by 
$T_1,\cdots,T_{n-1}$ 
is isomorphic to 
the Iwahori-Hecke algebra 
$_R \He_n$ 
of the symmetric group  $\FS_n$ of degree $n$.  
For 
$w \in \FS_n$, 
we denote by 
$\ell(w)$ the length of $w$, 
and denote by 
$T_w$  the standard basis of $_R \He_n$ corresponding to $w$.

\para 
Let $\Bm =(m_1,\cdots, m_r) \in \ZZ_{>0}^r$ 
be an $r$-tuple of positive integers. 
Put
\[ \vL_{n,r} (\Bm) =\left\{ \mu=(\mu^{(1)},\cdots,\mu^{(r)}) \Biggm| 
	\begin{matrix}
		\mu^{(k)}=(\mu_1^{(k)},\cdots,\mu_{m_k}^{(k)}) \in \ZZ_{\geq 0}^{m_k} \\
	    \sum_{k=1}^r \sum_{i=1}^{m_k} \mu_i^{(k)}=n 
	\end{matrix}	
	\right\}. 
\]
We denote by $|\mu^{(k)}|= \sum_{i=1}^{m_k} \mu_i^{(k)}$ (resp. $|\mu|=\sum_{k=1}^r |\mu^{(k)}|$) 
the size of $\mu^{(k)}$ (resp. the size of $\mu$), 
and call an element of $\vL_{n,r}(\Bm)$ an $r$-composition of size $n$. 
We define the map 
$\zeta : \vL_{n,r}(\Bm) \ra \ZZ_{\geq 0}^r$ 
by 
$\zeta(\mu) = (|\mu^{(1)}|,|\mu^{(2)}|, \cdots, |\mu^{(r)}|)$ 
for 
$\mu \in \vL_{n,r}(\Bm)$. 
We also define the partial order \lq\lq $\succeq$" on $\ZZ_{\geq 0}^r$ 
by 
$(a_1,\cdots,a_r) \succ (a_1', \cdots, a'_r)$ 
if 
$\sum_{j=1}^k a_j \geq \sum_{j=1}^k a'_j$ 
for any $k=1,\cdots,r$.
Put 
$\vL_{n,r}^+ (\Bm) = \{  \la \in \vL_{n,r}(\Bm)  \,|\, \la_1^{(k)} \geq \la_2^{(k)} \geq \cdots \geq \la_{m_k}^{(k)} 
	\text{ for any } k=1,\cdots,r \}$. 
We also denote by 
$\vL_{n,r}^+$ 
the set of $r$-partitions of size $n$. 
Then 
we have 
$\vL_{n,r}^+(\Bm)= \vL_{n,r}^+$ 
when $m_k \geq n$ for any $k=1, \cdots, r$.

\para 
\label{Definition M^mu}
For $i=1,\cdots,n$, 
put 
$L_1=T_0$ 
and  
$L_i=T_{i-1} L_{i-1} T_{i-1}$. 
For $\mu \in \vL_{n,r}(\Bm)$, 
put 
\[
m_\mu = \Big( \sum_{w \in \FS_\mu} q ^{\ell (w)} T_w \Big) \Big( \prod_{k=1}^r \prod_{i=1}^{a_k}(L_i - Q_k) \Big), 
\quad 
M^\mu = m_\mu \cdot \,_R \He_{n,r}, 
\] 
where 
$\FS_\mu$ is the Young subgroup of $\FS_n$ with respect to $\mu$, 
and 
$a_k=\sum_{j=1}^{k-1}|\mu^{(j)}|$ 
with 
$a_1=0$.  
The cyclotomic $q$-Schur algebra 
$_R \Sc_{n,r}$ 
associated to 
$_R \He_{n,r}$ 
is defined by 
\[ _R \Sc_{n,r} = \,_R \Sc_{n,r} (\vL_{n,r}(\Bm)) = \End_{_R \He_{n,r}} \Big( \bigoplus_{\mu \in \vL_{n,r}(\Bm) } M^\mu \Big).\]

Put 
$\vG(\Bm) = \{(i,k) \,|\, 1 \leq i \leq m_k, \, 1 \leq k \leq r\}$. 
For $\mu \in \vL_{n,r}(\Bm)$ and $(i,k) \in \vG(\Bm)$, 
we define 
$\s_{(i,k)}^\mu \in \, _R \Sc_{n,r}$ by 
\[ 
\s_{(i,k)}^\mu ( m_\nu \cdot h) = \d_{\mu,\nu} \big( m_\mu (L_{N+1} + L_{N+2} + \cdots + L_{N+\mu_i^{(k)}}) \big) \cdot h 
\quad 
(\nu \in \vL_{n,r}(\Bm), \, h \in \, _R \He_{n,r}), 
\] 
where 
$N=\sum_{l=1}^{k-1} | \mu^{(l)}| + \sum_{j=1}^{i-1} \mu_j^{(k)}$, 
and 
we set $\s_{(i,k)}^\mu =0$ if $\mu_i^{(k)}=0$. 
For $(i,k) \in \vG(\Bm)$, 
put 
$\s_{(i,k)} = \sum_{\mu \in \vL_{n,r}(\Bm)} \s_{(i,k)}^\mu$, 
then 
$\s_{(i,k)} $ 
is a Jucys-Murphy element 
of $_R \Sc_{n,r}$ 
(See \cite{Mat08} for properties of Jucys-Murphy elements). 

\para 
Let  
$\CA = \ZZ [ q,q^{-1}, Q_1,\cdots, Q_r]$, 
where 
$q,Q_1,\cdots,Q_r$ are indeterminate  over $\ZZ$, 
and 
$\CK = \QQ (q, Q_1,\cdots,Q_r)$ 
be the quotient field of $\CA$. 
In order to describe  presentations of $_\CK \Sc_{n,r}$ (resp. $_\CA \Sc_{n,r}$), 
we prepare some notation. 
Put 
$m= \sum_{k=1}^r m_k$. 
Let 
$P=\bigoplus_{i=1}^m \ZZ \ve_i$ 
be the weight lattice of $\Fgl_m$, 
and 
$P^{\vee}=\bigoplus_{i=1}^m \ZZ h_i$  
be the dual weight lattice 
with the natural pairing 
$\lan \, , \, \ran : P \times P^{\vee} \ra \ZZ$ 
such that  
$\lan \ve_i, h_j \ran = \d_{ij}$.
Set $\a_i=\ve_i - \ve_{i+1}$ for $i=1,\cdots,m-1$, 
then 
$\Pi=\{\a_i\,|\, 1\leq i \leq m-1\}$ 
is the set of simple roots, 
and 
$Q=\bigoplus_{i=1}^{m-1} \ZZ\, \a_i$ 
is the root lattice of $\Fgl_m$. 
Put 
$Q^+ = \bigoplus_{i=1}^{m-1} \ZZ_{\geq 0}\, \a_i$. 
We define a partial order 
\lq\lq \,$ \geq$ "
on $P$, 
so called dominance order, 
by 
$\la \geq \mu $ if $\la - \mu \in Q^+$.

We  identify the set 
$\vG (\Bm) $  
with the set  
$\{1,\cdots, m\}$
by the bijection $\g : \vG(\Bm) \ra \{1,\cdots, m\}$ 
given by  
$\g((i,k)) =\sum_{j=1}^{k-1} m_j + i$. 
Put 
$\vG'(\Bm) = \vG \setminus \{(m_r,r)\}$. 
Under this identification, 
we have 
$P=\bigoplus_{i=1}^m \ZZ \ve_i = \bigoplus_{(i,k) \in \vG(\Bm)} \ZZ \ve_{(i,k)}$ 
and 
$Q=\bigoplus_{i=1}^{m-1} \ZZ\, \a_i = \bigoplus_{(i,k) \in \vG'(\Bm)} \ZZ \, \a_{(i,k)}$.  
Then 
we  regard 
$\vL_{n,r}(\Bm)$ 
as a subset of $P$ 
by the injective map 
$\la \mapsto \sum_{(i,k) \in \vG(\Bm)} \la_i^{(k)} \ve_{(i,k)}$. 
For convenience, 
we consider  $(m_k +1,k) = (1, k+1)$ for $(m_k,k) \in \vG'(\Bm)$ 
(resp. $(1-1,k)=(m_{k-1}, k-1)$ for  $(1, k) \in \vG(\Bm) \setminus \{(1,1)\}$).    

Now we have the following  two  presentations of cyclotomic $q$-Schur algebras. 

\begin{thm}[{\cite[Theorem 7.16]{W}}]\
\label{Theorem presentation}
Assume that $m_k \geq n$ for any $k=1,\cdots, r$, 
we have the following presentations of $_\CK \Sc_{n,r}$ and $_\CA \Sc_{n,r}$. 

$(\mathrm{\roi})$    
$_\CK \Sc_{n,r}$ is isomorphic to the algebra over $\CK$ defined by the generators 
$e_{(i,k)}, f_{(i,k)} \\ \big( (i,k) \in \vG'(\Bm) \big)$ and $K_{(i,k)}^{\pm}$ $\big( (i,k) \in \vG(\Bm) \big)$   
with the following defining relations : 
\begin{align}
& K_{(i,k)} K_{(j,l)} = K_{(j,l)} K_{(i,k)}, 
	\quad
	K_{(i,k)} K_{(i,k)}^- = K_{(i,k)}^- K_{(i,k)} =1 
\\
& 	K_{(i,k)} e_{(j,l)} K_{(i,k)}^- = q^{\lan \a_{(j,l)} , h_{(i,k)} \ran} e_{(j,l)}, 
\\
& 	K_{(i,k)} f_{(j,l)} K_{(i,k)}^- = q^{- \lan \a_{(j,l)} , h_{(i,k)} \ran} f_{(j,l)}, 
\\
& e_{(i,k)} f_{(j,l)} - f_{(j,l)} e_{(i,k)} = \d_{(i,k), (j,l)} \eta_{(i,k)}, 
\label{eta relation}
\\
\notag 
& 
\hspace{-3.5em}
\text{where } 
\eta_{(i,k)} = 
	\begin{cases} 
		\dis \frac{K_{(i,k)} K_{(i+1,k)}^- - K_{(i,k)}^- K_{(i+1,k)} }{q-q^{-1}} 
			& \text{if } i \not=m_k, 
		\\[5mm]
		\dis - Q_{k+1} \frac{K_{(m_k ,k)} K_{(1,k+1)}^- - K_{(m_k, k)}^- K_{(1,k+1)} }{q-q^{-1}} 
		\\ \quad 
		+ K_{(m_k,k )} K_{(1, k+1)}^- (q^{-1} g_{(m_k,k)}(f,e) - q g_{(1,k+1)}(f,e) )
			& \text{if } i= m_k,
	\end{cases}
\\
& e_{(i \pm 1, k)} e_{(i,k)}^2 - (q + q^{-1}) e_{(i,k)} e_{(i \pm 1, k)} e_{(i,k)} + e_{(i,k)}^2 e_{(i \pm 1, k)} =0, 
\\
& \notag 
	e_{(i,k)} e_{(j,l)} = e_{(j,l)} e_{(i,k)} \quad ( | \g((i,k)) - \g((j,l))| \geq 2), 
\\
& f_{(i \pm 1, k)} f_{(i,k)}^2 - (q + q^{-1}) f_{(i,k)} f_{(i \pm 1, k)} f_{(i,k)} + f_{(i,k)}^2 f_{(i \pm 1, k)} =0, 
\\
& \notag 
	f_{(i,k)} f_{(j,l)} = f_{(j,l)} f_{(i,k)} \quad ( | \g((i,k)) - \g((j,l))| \geq 2), 
\\
& 
\prod_{(i,k) \in \vG(\Bm)} K_{(i,k)} = q^n,
\\
&
(K_{(i,k)} -1 )(K_{(i,k)}-q) (K_{(i,k)} - q^2)\cdots (K_{(i,k)} - q^n)=0. 
\end{align}

The elements 
$g_{(m_k,k)}(f,e)$, 
$g_{(1,k+1)}(f,e)$ 
in  \eqref{eta relation}  
coincide with the Jucys-Murphy elements $\s_{(m_k,k)}$, $\s_{(1,k+1)}$ respectively,  
which 
are described by generators 
$e_{(i,k)}, f_{(i,k)}$ $\big( (i,k) \in \vG'(\Bm) \big)$ and $K_{(i,k)}^{\pm}$ $\big( (i,k) \in \vG(\Bm) \big)$ 
(see \cite[7.11]{W}). 

Moreover, 
$_\CA \Sc_{n,r}$ is isomorphic to the $\CA$-subalgebra of $_\CK \Sc_{n,r}$ 
generated by 
$e_{(i,k)}^l/[l]!$, $f_{(i,k)}^l/[l]!$ $\big( (i,k) \in \vG'(\Bm), l \geq1 \big)$, 
$K_{(i,k)}^\pm$, 
$\dis \left[ \begin{matrix} K_{(i,k)} ; 0 \\ t \end{matrix} \right] = \prod_{s=1}^t \frac{K_{(i,k)} q^{-s +1} - K_{(i,k)}^{-1} q^{s-1}}{q^s - q^{-s}}$  
$\big( (i,k) \in \vG(\Bm), t \geq 1 \big)$, 
where 
$[l] = \frac{q^l -q^{-l}}{q-q^{-1}}$ 
and   
$[l]!=[l][l-1] \cdots [1]$.
\\

$(\mathrm{\roii})$ 
$_\CK \Sc_{n,r}$ is isomorphic to the algebra over $\CK$ defined by the generators 
$E_{(i,k)}$, $F_{(i,k)}$ $((i,k) \in \vG'(\Bm))$, $1_\la$ $(\la \in \vL_{n,r}(\Bm))$ 
with the following defining relations:  
\begin{align}
&1_\la 1_\mu = \d_{\la,\mu} 1_\la, \quad \sum_{\la \in \vL_{n,r}(\Bm)} 1_\la =1, \label{S-1}\\
&E_{(i,k)} 1_\la = 
	\begin{cases} 
		1_{\la + \a_{(i,k)}} E_{(i,k)} & \text{ if }\la + \a_{(i,k)} \in \vL_{n,r}(\Bm), \label{S-2}\\
		0 & \text{otherwise},
	\end{cases}	\\
&F_{(i,k)} 1_\la = 
	\begin{cases} 
		1_{\la - \a_{(i,k)}} F_{(i,k)} & \text{ if }\la - \a_{(i,k)} \in \vL_{n,r}(\Bm), \label{S-3}\\
		0 & \text{otherwise},
	\end{cases}	\\
&1_{\la} E_{(i,k)}  = 
	\begin{cases} 
		E_{(i,k)} 1_{\la - \a_{(i,k)}} & \text{ if }\la - \a_{(i,k)} \in \vL_{n,r}(\Bm), \label{S-4}\\
		0 & \text{otherwise},
	\end{cases}	\\
&1_\la F_{(i,k)} = 
	\begin{cases} 
		F_{(i,k)} 1_{\la + \a_{(i,k)}} & \text{ if }\la + \a_{(i,k)} \in \vL_{n,r}(\Bm), \label{S-5}\\
		0 & \text{otherwise},
	\end{cases}	\\
&
\label{S-6}
E_{(i,k)}F_{(j,l)} - F_{(j,l)}E_{(i,k)} 
		=\d_{(i,k),(j,l)} \sum_{\la \in \vL_{n,r}} \eta_{(i,k)}^\la, 
\\
\notag
&
\hspace{-3.5em}
\text{where }
\eta_{(i,k)}^\la = 
\begin{cases} 
	[\la_i^{(k)} - \la_{i+1}^{(k)}] 1_\la 
		&\text{if } i\not= m_k, 
	\\[3mm]
	\Big( -Q_{k+1} [ \la_{m_k}^{(k)} - \la_1^{(k+1)}] 
	\\ \quad 
		+ q^{\la_{m_k}^{(k)} - \la_1^{(k+1)}} 
			(q^{-1} g_{(m_k,k)}^\la(F,E) - q g_{(1,k+1)}^\la(F,E) ) \Big) 1_\la 
		&\text{if } i=m_k,
\end{cases}
\\
&E_{(i \pm 1,k)}(E_{(i,k)})^2 - (q+q^{-1}) E_{(i,k)} E_{(i \pm 1,k)} E_{(i,k)} 
	+ (E_{(i,k)})^2 E_{(i \pm 1,k)}=0 , \label{S-7}\\
& E_{(i,k)} E_{(j,l)}= E_{(j,l)} E_{(i,k)} \qquad (|\g((i,k))- \g((j,l))| \geq 2), \notag \\
&F_{(i \pm 1,k)}(F_{(i,k)})^2 - (q+q^{-1}) F_{(i,k)} F_{(i \pm 1,k)} F_{(i,k)} 
	+ (F_{(i,k)})^2 F_{(i \pm 1,k)}=0,  \label{S-8}\\
& F_{(i,k)} F_{(j,l)}= F_{(j,l)} F_{(i,k)} \qquad  (|\g((i,k))- \g((j,l))| \geq 2), \notag 
\end{align}
The elements 
$g^\la_{(m_k,k)}(F,E)$, 
$g^\la_{(1,k+1)}(F,E)$ 
in  \eqref{S-6}
coincide with $\s_{(m_k,k)}^\la$, $\s_{(1,k+1)}^\la$ respectively, 
which  
are described by generators 
$E_{(i,k)}, F_{(i,k)}$ $\big( (i,k) \in \vG'(\Bm) \big)$ 
(see \cite[7.1-7.4]{W}). 

Moreover, 
$_\CA \Sc_{n,r}$ is isomorphic to the $\CA$-subalgebra of $_\CK \Sc_{n,r}$ 
generated by 
$E_{(i,k)}^l/[l]!$, 
$F_{(i,k)}^l/[l]!$ 
$\big( (i,k) \in \vG'(\Bm),  \, l \geq 1\big)$, 
$1_\la$ $(\la \in \vL_{n,r}(\Bm) )$. 
\end{thm}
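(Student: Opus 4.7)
The plan is to prove presentation (i) first and derive presentation (ii) from it by passing to idempotents. For (i), I would follow the standard three-step route: construct a homomorphism from the abstract algebra $\wt{\Sc}$ (defined by the listed generators and relations) into $\Sc_{n,r}$, verify surjectivity, and then establish injectivity by a dimension count.

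The construction of $\Phi : \wt{\Sc} \to \Sc_{n,r} = \End_{\He_{n,r}}\bigl(\bigoplus_\mu M^\mu\bigr)$ proceeds by specifying explicit endomorphisms. I would send $K_{(i,k)}$ to the scalar operator $q^{\mu_i^{(k)}}$ on $M^\mu$, and send $e_{(i,k)}$ (resp. $f_{(i,k)}$) to the raising (resp. lowering) operators of Dipper--James--Mathas type, which use the known multiplication formulas for $m_\mu$ to produce a canonical generator of $\Hom_{\He_{n,r}}(M^{\mu}, M^{\mu \pm \a_{(i,k)}})$ whenever the target index lies in $\vL_{n,r}(\Bm)$. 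Verifying the defining relations splits into three cases. The Cartan commutations with the $K$'s follow from the weight-space decomposition; the Serre relations and the commutation of $e$'s, $f$'s indexed by non-adjacent nodes within a single block $k$ reduce to the ordinary quantum $\Fgl_{m_k}$ relations already known for $r=1$; and the modified Cartan relation \eqref{eta relation} is the new point. For $i \neq m_k$, the commutator $e_{(i,k)} f_{(i,k)} - f_{(i,k)} e_{(i,k)}$ reduces to the standard $\Fgl$-type expression. The hard case is the boundary $i = m_k$, which I would handle by computing the commutator directly on $m_\mu$ using the expansion $L_j = T_{j-1}\cdots T_1 T_0 T_1 \cdots T_{j-1}$ together with the cyclotomic factor $\prod_k \prod_{i=1}^{a_k}(L_i - Q_k)$ built into $m_\mu$: the factor $(L_{a_{k+1}+1} - Q_{k+1})$ is precisely what produces the $-Q_{k+1}$ term, and the remaining Jucys--Murphy contribution matches $q^{-1}g_{(m_k,k)}(f,e) - q g_{(1,k+1)}(f,e)$ by comparison with the known action of $\s_{(m_k,k)}$ and $\s_{(1,k+1)}$ as reviewed in paragraph \ref{Definition M^mu}.

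To prove surjectivity I would exhibit the Dipper--James--Mathas generating set of $\Sc_{n,r}$ as expressions in the $e,f,K$: the idempotents $1_\la$ are obtained from the $K$'s by Lagrange interpolation using the polynomial identity $(K_{(i,k)}-1)(K_{(i,k)}-q)\cdots(K_{(i,k)}-q^n)=0$, and the transition endomorphisms between distinct $M^\mu$'s are built up by iterated products of $e$'s and $f$'s. For injectivity I would argue by a PBW-type reduction: the relations of $\wt{\Sc}$ suffice to rewrite every monomial as an ordered product $f^\bullet K^\bullet e^\bullet$, and these ordered monomials are in bijection with a known basis of $\Sc_{n,r}$ (e.g.\ the cellular basis of \cite{DJM98}), so a dimension count over $\CK$ closes the argument. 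The $\CA$-form claim then reduces to checking that the divided powers $e^l/[l]!,\ f^l/[l]!$ together with the quantum binomials $\bigl[\begin{smallmatrix} K_{(i,k)} ; 0 \\ t \end{smallmatrix}\bigr]$ hit the cellular $\CA$-basis integrally. Finally, presentation (ii) follows by defining $1_\la$ to be the Lagrange-interpolation idempotent onto the joint $K$-eigenspace of weight $\la$, setting $E_{(i,k)} = \sum_\la 1_{\la + \a_{(i,k)}} e_{(i,k)} 1_\la$ and $F_{(i,k)}$ analogously, and checking that \eqref{S-1}--\eqref{S-8} follow mechanically from (i), while conversely $K_{(i,k)} = \sum_\la q^{\la_i^{(k)}} 1_\la$ recovers (i) from (ii). The single hardest step throughout is the boundary commutator calculation producing \eqref{eta relation}; everything else is either a localised quantum-$\Fgl$ manipulation or a formal rewriting.
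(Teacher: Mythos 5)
This theorem is not proved in the paper at all: it is quoted verbatim from \cite[Theorem 7.16]{W}, and the only thing the present paper adds is Remark \ref{Remark weight cut}, namely that the argument of \cite{W} (given there for $m_k=n$) goes through unchanged under the weaker hypothesis $m_k\geq n$. So there is no internal proof to compare against; what can be assessed is whether your outline is a viable reconstruction of the argument in \cite{W}.

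Your overall architecture --- build a homomorphism from the abstractly presented algebra into $\End_{\He_{n,r}}(\bigoplus_\mu M^\mu)$, check the relations on the $m_\mu$'s, prove surjectivity, close with a dimension count over $\CK$, then obtain (ii) from (i) by Lagrange interpolation on the $K$'s --- is indeed the shape of the argument in \cite{W}, and your identification of the boundary commutator \eqref{eta relation} as the hard point is correct. Two steps, however, are genuinely underpowered as written. First, you treat $g_{(m_k,k)}(f,e)$ as something to be ``matched by comparison with the known action of $\s_{(m_k,k)}$''; but the content of \cite[7.1--7.11]{W} is precisely that the Jucys--Murphy endomorphism $\s_{(m_k,k)}$ \emph{can} be written as an explicit polynomial in the generators $e,f,K$ of the abstract algebra, and without that expression the relation \eqref{eta relation} is not even a well-posed relation among the stated generators. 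This is not a verification but a construction, and it is the technical core of the theorem. Second, your injectivity argument via a PBW-type straightening into $f^\bullet K^\bullet e^\bullet$ monomials is doubtful: the commutator $[e_{(m_k,k)},f_{(m_k,k)}]$ does not produce a Cartan element but the complicated expression $\eta_{(m_k,k)}$ involving $g(f,e)$, which is itself a polynomial in the $e$'s and $f$'s, so the standard rewriting procedure does not terminate in any obvious way and you have not shown that the ordered monomials span, let alone biject with the cellular basis. The argument in \cite{W} avoids this by instead establishing a triangular decomposition and a highest-weight classification of the simple modules of the abstract algebra, and then using semisimplicity over $\CK$ to bound its dimension by $\dim{}_\CK\Sc_{n,r}$; you would need either to supply that representation-theoretic count or to actually prove the spanning and independence claims for your ordered monomials.
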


\remark 
\label{Remark weight cut}
In \cite{W}, 
we treated only the case where $m_k =n$ for any $k=1,\cdots,r$. 
We obtain Theorem \ref{Theorem presentation} for the general case in the same way under the condition $m_k \geq n$ for any $k=1,\cdots, r$. 
However, in the case where $m_k < n$ for some $k$, 
we do not have the presentation of $\Sc_{n,r}(\vL_{n,r}(\Bm))$ as in the above theorem. 
In such a case, 
we have the following realization of $\Sc_{n,r}(\vL_{n,r}(\Bm))$. 
First, we take $\wt{\Bm}= (\wt{m}_1,\cdots, \wt{m}_r) \in \ZZ_{>0}^r$ 
such that 
$\wt{m}_k \geq n$ and $\wt{m}_k \geq m_k$ for any $k=1,\cdots, r$. 
Then, we can regard $\vL_{n,r}(\Bm)$ as a subset of $\vL_{n,r}(\wt{\Bm})$ in the natural way. 
We have the presentation of $\Sc_{n,r}(\vL_{n,r}(\wt{\Bm}))$ by the theorem, 
and 
we have 
$\Sc_{n,r}(\vL_{n,r}(\Bm)) = 1_{\Bm} \Sc_{n,r}(\vL_{n,r}(\wt{\Bm})) 1_{\Bm}$, 
where $1_{\Bm} = \sum_{\la \in \vL_{n,r}(\Bm)} 1_\la \in \Sc_{n,r}(\vL_{n,r}(\wt{\Bm}))$.

\para 
\label{Definition Weyl}
\textit{Weyl modules} (see \cite{W} and \cite{DJM98} for more details).
Let 
$_\CA \Sc_{n,r}^+$ (resp. $_\CA \Sc_{n,r}^-$) 
be the subalgebra of $_\CA \Sc_{n,r}$ 
generated by 
$E_{(i,k)}^{l}/[l]!$ 
(resp. $F_{(i,k)}^{l}/[l]!$) 
for $(i,k) \in \vG'(\Bm)$ and $l \geq 1$. 
Let  
$_\CA \Sc_{n,r}^0$ 
be the subalgebra of $_\CA \Sc_{n,r}$ 
generated by 
$1_\la$ for $\la \in \vL_{n,r}(\Bm)$. 
Then 
$_\CA \Sc_{n,r}$ has the triangular decomposition 
$_\CA \Sc_{n,r} = \, _\CA \Sc_{n,r}^- \, _\CA \Sc_{n,r}^0 \, _\CA \Sc_{n,r}^+$. 
We denote by 
$_\CA \Sc_{n,r}^{\geq 0}$ 
the subalgebra of $_\CA \Sc_{n,r}$ 
generated by 
$_\CA \Sc_{n,r}^+$ and $_\CA \Sc_{n,r}^0$. 

Let $R$ be an  arbitrary commutative ring, 
and we take parameters 
$\wt{q}, \wt{Q}_1,\cdots,\wt{Q}_r \in R$ 
such that 
$\wt{q}$ is invertible in $R$. 
We consider 
the specialized cyclotomic $q$-Schur algebra 
$_R \Sc_{n,r} = R \otimes_{\CA}\,_\CA \Sc_{n,r}$ 
through the ring homomorphism 
$\CA \ra R$, 
$q \mapsto \wt{q}$, $Q_k \mapsto \wt{Q}_k$ ($1 \leq k \leq r$). 
Then $_R \Sc_{n,r}$ also has the triangular decomposition 
$_R \Sc_{n,r} = \, _R \Sc_{n,r}^- \, _R \Sc_{n,r}^0 \, _R \Sc_{n,r}^+$ 
which comes from the triangular decomposition of $_\CA \Sc_{n,r}$.

For $\la \in \vL_{n,r}^+(\Bm)$, 
we define the 1-dimensional $_R \Sc_{n,r}^{\geq 0}$-module 
$\theta_\la = R v_\la$ 
by 
$E_{(i,k)} \cdot v_\la =0$ $\big( (i,k) \in \vG'(\Bm) \big)$ 
and 
$1_\mu \cdot v_\la = \d_{\la,\mu} v_\la $ $(\mu \in \vL_{n,r}(\Bm))$. 
Then the  Weyl module $_R W(\la)$ of $_R \Sc_{n,r}$ 
is defined as the induced module $_R \Sc_{n,r} \otimes_{_R \Sc_{n,r}^{\geq 0}} \theta_\la$ of $\theta_\la$ 
for $\la \in \vL_{n,r}^+(\Bm)$.

When $R$ is a field, 
it is known that 
$_R W(\la)$ has the unique simple top $_R L(\la)$, 
and that 
$\{\,_R L(\la) \,|\, \la \in \vL_{n,r}^+(\Bm)\}$ 
gives a complete set of non-isomorphic (left) simple $_R \Sc_{n,r}$-modules. 
Moreover, 
it is known that 
$_\CK \Sc_{n,r}$ 
is semi-simple,  
and that 
$\{_\CK W(\la) \,|\, \la \in \vL_{n,r}^+(\Bm)\}$ 
gives a complete set of non-isomorphic (left) simple $_\CK \Sc_{n,r}$-modules.

\para 
By \eqref{S-1}, 
the identity element $1$ of $_R \Sc_{n,r}$ 
decomposes to a sum of  pairwise orthogonal idempotents 
indexed by $\vL_{n,r}(\Bm)$,  
namely we have  $1 = \sum_{\la \in \vL_{n,r}(\Bm)} 1_\la $. 
Thanks to this decomposition, 
for $_R \Sc_{n,r}$-module $M$, 
we have the decomposition 
$M = \bigoplus_{\la \in \vL_{n,r}(\Bm)} 1_\la M$ 
as $R$-modules.  
By the isomorphism between the first presentation and the second presentation of $\Sc_{n,r}$ in Theorem \ref{Theorem presentation} 
(see \cite[Proposition 7.12]{W} for this isomorphism),   
we see that 
$K_{(i,k)}$ acts on $1_\la M$ 
by multiplying the scalar $q^{\la_i^{(k)}}$, 
namely we have 
$1_\la M = \{ m \in M \,|\, K_{(i,k)} \cdot m = q^{\la_i^{(k)}} m \text{ for } (i,k) \in \vG(\Bm)\}$.  
We call 
$1_\la M$ the weight space of weight $\la$ 
(or $\la$-weight space simply),  
and denote  by $M_\la$. 



\section{$U_q(\Fg)$-crystal structure on Weyl module $W(\la)$ of $\Sc_{n,r}$} 
\label{section crystal}
In the section \ref{section crystal} -- section \ref{section character}, 
we consider only the cyclotomic $q$-Schur algebra $_\CK \Sc_{n,r}$ over $\CK$. 
Hence, we omit the subscript $\CK$.  
Moreover, 
we assume that 
$m_k \geq n$ for any $k=1,\cdots,r$ in this section.

\para 
\label{Definition Phi_Fg}
Let $\Fg = \Fgl_{m_1} \oplus \cdots \oplus \Fgl_{m_r}$ 
be the Levi subalgebra of $\Fgl_m$, 
and 
$U_q(\Fg) \cong U_q(\Fgl_{m_1}) \otimes \cdots \otimes U_q(\Fgl_{m_r})$ 
be the quantum group over $\CK$ corresponding to $\Fg$. 
Put $\vG'_{\Fg}(\Bm) = \vG(\Bm) \setminus \{(m_k,k) \,|\, 1 \leq k \leq r\}$. 
Let 
$e_{(i,k)}, f_{(i,k)}$ ($(i,k) \in \vG'_{\Fg}(\Bm)$), 
$K_{(i,k)}^{\pm}$ ($(i,k) \in \vG(\Bm)$) 
be the generators of $U_q(\Fg)$, 
where 
$e_{(i,k)}, f_{(i,k)}, K_{(j,k)}^\pm$ ($1 \leq i \leq m_k-1$, $1 \leq j \leq m_k$) 
is the usual Chevalley  generators of $U_q(\Fgl_{m_k})$.

By the presentation of $\Sc_{n,r}$ (Theorem \ref{Theorem presentation}), 
we can define the algebra homomorphism 
$\Phi_\Fg : U_q(\Fg) \ra \Sc_{n,r}$ 
sending generators of $U_q(\Fg)$ to the corresponding generators of $\Sc_{n,r}$ denoted by the same symbol. 
Note that 
$\Phi_\Fg$ is not surjective without the case where $r=1$. 
We have the following lemma which describes the image of $\Phi_\Fg$.

\begin{lem}\
\label{Lemma image Phi_g}
\begin{enumerate}
\item 
$\dis \Phi_\Fg(U_q(\Fg)) \cong \bigoplus_{\eta = (n_1,\cdots,n_r) \atop n_1+ \cdots + n_r=n} 
	\Sc_{n_1,1}^\eta(\vL_{n_1,1}(m_1)) \otimes \cdots \otimes \Sc_{n_r,1}^\eta(\vL_{n_r,1}(m_r))$, 
\\
where $\Sc_{n_k,1}^\eta(\vL_{n_k,1}(m_k))$ is the $q$-Schur algebra associated to the symmetric group $\FS_{n_k}$ of degree $n_k$.  

\item 
Let 
$_\CA U_q(\Fg)$  be the $\CA$-form of $U_q(\Fg)$ by taking the divided powers. 
Then we have 
\[ 
	\Phi_\Fg(\,_\CA U_q(\Fg)) \cong \bigoplus_{\eta = (n_1,\cdots,n_r) \atop n_1+ \cdots + n_r=n} 
	\, _\CA \Sc_{n_1,1}^\eta(\vL_{n_1,1}(m_1)) \otimes \cdots \otimes \, _\CA \Sc_{n_r,1}^\eta(\vL_{n_r,1}(m_r))
\]
\end{enumerate}
\end{lem}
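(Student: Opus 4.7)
The plan is to exploit the block decomposition of $\Sc_{n,r}$ coming from the map $\zeta : \vL_{n,r}(\Bm) \to \ZZ_{\geq 0}^r$ that records the sizes of each component. For each $\eta = (n_1, \ldots, n_r)$ with $n_1 + \cdots + n_r = n$, set $1_\eta = \sum_{\la\,:\,\zeta(\la) = \eta} 1_\la$. Because the generators $e_{(i,k)}, f_{(i,k)}$ of $U_q(\Fg)$ range only over $(i,k) \in \vG'_{\Fg}(\Bm)$ (in particular $i < m_k$), their images shift a weight $\la$ by $\pm(\ve_{(i,k)} - \ve_{(i+1,k)})$, leaving $\zeta(\la)$ unchanged. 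Hence each $1_\eta$ is central in $\Phi_\Fg(U_q(\Fg))$, and we have $\Phi_\Fg(U_q(\Fg)) = \bigoplus_\eta 1_\eta \Phi_\Fg(U_q(\Fg))$.

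For each fixed $\eta$, the next step is to identify $1_\eta \Phi_\Fg(U_q(\Fg))$ with $\Sc_{n_1,1}(\vL_{n_1,1}(m_1)) \otimes \cdots \otimes \Sc_{n_r,1}(\vL_{n_r,1}(m_r))$. The set $\{\la \in \vL_{n,r}(\Bm) : \zeta(\la) = \eta\}$ is in natural bijection with $\prod_k \vL_{n_k,1}(m_k)$ via $\la \mapsto (\la^{(1)}, \ldots, \la^{(r)})$. Under the presentation of Theorem \ref{Theorem presentation}, the images $\Phi_\Fg(E_{(i,k)}), \Phi_\Fg(F_{(i,k)})$ for fixed $k$ and $i < m_k$ satisfy exactly the quantum Serre relations of $U_q(\Fgl_{m_k})$; moreover the relation \eqref{S-6} simplifies, since $i \neq m_k$, to the simple $\Fsl_2$-type formula $\eta_{(i,k)}^\la = [\la_i^{(k)} - \la_{i+1}^{(k)}] 1_\la$, and generators from different Levi factors commute thanks to the assumption $|\g((i,k)) - \g((j,l))| \geq 2$ whenever $k \neq l$ with $i < m_k$ and $j < m_l$. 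This yields a surjective algebra homomorphism $\bigotimes_k U_q(\Fgl_{m_k}) \twoheadrightarrow 1_\eta \Phi_\Fg(U_q(\Fg))$, which factors through the tensor product of $q$-Schur algebras once one observes that on $1_\eta \Phi_\Fg(U_q(\Fg))$ the weights $\la_i^{(k)}$ are bounded by $n_k$ rather than $n$, giving the sharpened polynomial identity on $K_{(i,k)}$ characteristic of $\Sc_{n_k,1}$.

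The main obstacle is to verify that the resulting map $\bigotimes_k \Sc_{n_k,1}(\vL_{n_k,1}(m_k)) \to 1_\eta \Phi_\Fg(U_q(\Fg))$ is injective, i.e.\ that no further relations are imposed. I would handle this module-theoretically: both sides act naturally on $\bigoplus_{\zeta(\mu) = \eta} M^\mu$, where the summands factor compatibly as tensor products of permutation modules for the individual symmetric groups $\FS_{n_k}$; thus faithfulness reduces to the classical faithfulness of each $q$-Schur algebra acting on $\bigoplus_{\mu^{(k)}} M^{\mu^{(k)}}$, which is the defining property in the case $r=1$. Part (ii) follows by the same argument applied to $\CA$-forms: the divided powers $e_{(i,k)}^l/[l]!$ and $f_{(i,k)}^l/[l]!$ of $_\CA U_q(\Fg)$ map, by the integral part of Theorem \ref{Theorem presentation}, into $_\CA \Sc_{n,r}$, and by Du's integral presentation of $q$-Schur algebras they generate exactly $\bigoplus_\eta \bigotimes_k \,_\CA \Sc_{n_k,1}(\vL_{n_k,1}(m_k))$ block-by-block, with the same faithfulness argument working over $\CA$ by the freeness of the $\CA$-forms.
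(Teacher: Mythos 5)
Your overall strategy --- decomposing $\Phi_\Fg(U_q(\Fg))$ into blocks by the central idempotents $1_\eta=\sum_{\zeta(\la)=\eta}1_\la$ and identifying each block with $\bigotimes_k\Sc_{n_k,1}(\vL_{n_k,1}(m_k))$ via the presentation --- is essentially the same decomposition the paper uses, and your verification of the relations (the specialization of \eqref{S-6} when $i\neq m_k$, the commutativity of generators from distinct Levi factors since $|\g((i,k))-\g((j,l))|\geq 2$, and the sharpened polynomial relations on the $K_{(i,k)}$ restricted to the block $1_\eta$) is correct. The divergence, and the problem, is in the injectivity step. You assert that the summands of $\bigoplus_{\zeta(\mu)=\eta}M^\mu$ ``factor compatibly as tensor products of permutation modules for the individual symmetric groups $\FS_{n_k}$''. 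This is false: for $n=2$, $r=2$, $\mu=((1),(1))$ one has $m_\mu=T_0-Q_2$, and $M^\mu=m_\mu\He_{2,2}$ has rank $4$, whereas $M^{\mu^{(1)}}\otimes M^{\mu^{(2)}}$ over $\He_1\otimes\He_1$ has rank $1$. In general $M^\mu=m_\mu\cdot\He_{n,r}$ is a right ideal of the full Ariki--Koike algebra and is strictly larger than $\bigotimes_k M^{\mu^{(k)}}$, so the faithfulness of $\bigotimes_k\Sc_{n_k,1}$ on $\bigotimes_k\bigoplus_{\mu^{(k)}}M^{\mu^{(k)}}$ does not transfer in the way you claim.

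The gap is repairable, but it needs an observation you did not make: an element of $\Hom_{\He_{n,r}}(M^\nu,M^\mu)$ is determined by the image of the cyclic generator $m_\nu$, and by the formulas \eqref{generator in Sc 2}--\eqref{generator in Sc 3} (with $i\neq m_k$, so $h^\mu_{-(i,k)}=1$) the operators in $\Phi_\Fg(U_q(\Fg))$ send $m_\nu$ into $m_\mu\cdot\He_\eta$, where $\He_\eta\cong\He_{n_1}\otimes\cdots\otimes\He_{n_r}$ is the parabolic subalgebra of $\He_n$ determined by $\eta$. One must then identify $\bigoplus_{\zeta(\mu)=\eta}m_\mu\He_\eta$ with $\bigotimes_k\bigoplus_{\mu^{(k)}}M^{\mu^{(k)}}$ compatibly with the two actions, which in particular requires checking that left multiplication by $\prod_{k}\prod_{i=1}^{a_k}(L_i-Q_k)$ is injective on $\big(\sum_{w\in\FS_\mu}q^{\ell(w)}T_w\big)\He_\eta$. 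The paper sidesteps all of this by working over $\CK$: it writes down explicit homomorphisms $\vf:U_q(\Fg)\to\bigoplus_\eta\bigotimes_k\Sc_{n_k,1}^\eta$ and $\psi$ in the opposite direction satisfying $\psi\circ\vf=\Phi_\Fg$ (whence $\psi$ is surjective), and then deduces injectivity of $\psi$ from semisimplicity of both sides by matching the simple $U_q(\Fg)$-modules occurring on each; part (ii) is obtained simply by restricting to the $\CA$-forms rather than by a separate integral argument. As written, your proof of injectivity does not go through, either over $\CK$ or over $\CA$.
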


\begin{proof}
Let 
$e_i^{\eta_k}$, $f_i^{\eta_k}$ $(1 \leq i \leq m_k -1)$, 
$K_i^{\eta_k \pm}$ ($1 \leq i \leq m_k$) 
be the generators of $\Sc_{n_k,1}^{\eta} ( \vL_{n_k,1} (m_k))$ 
in  Theorem \ref{Theorem presentation} (\roi). 
Then, 
we define the homomorphism of algebras 
\[\vf : U_q(\Fg) \ra \bigoplus_{\eta = (n_1,\cdots,n_r) \atop n_1+ \cdots + n_r=n} 
	\Sc_{n_1,1}^\eta(\vL_{n_1,1}(m_1)) \otimes \cdots \otimes \Sc_{n_r,1}^\eta(\vL_{n_r,1}(m_r))
\] 
by 
\begin{align*}
&
\vf(e_{(i,k)}) = \sum_{\eta =(n_1, \cdots, n_r) \atop n_1+ \cdots +n_r=n} 
	\underbrace{1 \otimes \cdots \otimes 1}_{k-1} \otimes e_i^{\eta_k} \otimes 1  \otimes \cdots \otimes 1, 
\\
&
\vf(f_{(i,k)}) = \sum_{\eta =(n_1, \cdots, n_r) \atop n_1+ \cdots +n_r=n} 
	\underbrace{1 \otimes \cdots \otimes 1}_{k-1} \otimes f_i^{\eta_k} \otimes 1  \otimes \cdots \otimes 1, 
\\
&
\vf(K_{(i,k)}^\pm) = \sum_{\eta =(n_1, \cdots, n_r) \atop n_1+ \cdots +n_r=n} 
	\underbrace{1 \otimes \cdots \otimes 1}_{k-1} \otimes K_i^{\eta_k \pm} \otimes 1  \otimes \cdots \otimes 1
\end{align*}
for generators 
$e_{(i,k)}$, $f_{(i,k)}$ ($(i,k) \in \vG'_{\Fg}(\Bm)$), $K_{(i,k)}^\pm$ ($(i,k) \in \vG(\Bm)$) 
of $U_q(\Fg)$. 
(We can easily check that $\vf$ is well-defined by Theorem \ref{Theorem presentation} (\roi).)

We also define  the homomorphism of algebras  
\[
 \psi : \bigoplus_{\eta = (n_1,\cdots,n_r) \atop n_1+ \cdots + n_r=n} 
	\Sc_{n_1,1}^\eta(\vL_{n_1,1}(m_1)) \otimes \cdots \otimes \Sc_{n_r,1}^\eta(\vL_{n_r,1}(m_r)) 
	\ra 
	\Phi_\Fg(U_q(\Fg))
\]
by 
\begin{align*}
&
\psi(\underbrace{1 \otimes \cdots \otimes 1}_{k-1} \otimes e_i^{\eta_k} \otimes 1  \otimes \cdots \otimes 1) 
=(\sum_{\mu \in \vL_{n,r}(\Bm) \atop \zeta(\mu)=\eta} 1_\mu) \cdot 
		\Phi(e_{(i,k)}) \cdot (\sum_{\mu \in \vL_{n,r}(\Bm) \atop \zeta(\mu)=\eta} 1_\mu), 
\\
&
\psi(\underbrace{1 \otimes \cdots \otimes 1}_{k-1} \otimes f_i^{\eta_k} \otimes 1  \otimes \cdots \otimes 1) 
=(\sum_{\mu \in \vL_{n,r}(\Bm) \atop \zeta(\mu)=\eta} 1_\mu) \cdot 
		\Phi(f_{(i,k)}) \cdot (\sum_{\mu \in \vL_{n,r}(\Bm) \atop \zeta(\mu)=\eta} 1_\mu),
\\
&
\psi(\underbrace{1 \otimes \cdots \otimes 1}_{k-1} \otimes K_i^{\eta_k \pm } \otimes 1  \otimes \cdots \otimes 1) 
=(\sum_{\mu \in \vL_{n,r}(\Bm) \atop \zeta(\mu)=\eta} 1_\mu) \cdot  
		\Phi(K_{(i,k)}) \cdot (\sum_{\mu \in \vL_{n,r}(\Bm) \atop \zeta(\mu)=\eta} 1_\mu)
\end{align*}
for each generators of  
$\bigoplus_{\eta = (n_1,\cdots,n_r) \atop n_1+ \cdots + n_r=n} 
	\Sc_{n_1,1}^\eta(\vL_{n_1,1}(m_1)) \otimes \cdots \otimes \Sc_{n_r,1}^\eta(\vL_{n_r,1}(m_r))$. 
(We can check the well-definedness by direct calculations.)
It is clear that 
$\psi \circ \vf = \Phi_{\Fg}$. 
Thus,  
$\psi$ is surjective. 
Moreover, 
by comparing the  simple modules appearing 
in 
$\bigoplus_{\eta = (n_1,\cdots,n_r) \atop n_1+ \cdots + n_r=n} 
	\Sc_{n_1,1}^\eta(\vL_{n_1,1}(m_1)) \otimes \cdots \otimes \Sc_{n_r,1}^\eta(\vL_{n_r,1}(m_r))$
and 
in $\Phi_\Fg(U_q(\Fg))$ 
as $U_q(\Fg)$-modules 
through $\vf$ and $\Phi_{\Fg}$ respectively, 
we see that 
$\psi$ is an isomorphism. 
(Note that both $\bigoplus_{\eta = (n_1,\cdots,n_r) \atop n_1+ \cdots + n_r=n} 
	\Sc_{n_1,1}^\eta(\vL_{n_1,1}(m_1)) \otimes \cdots \otimes \Sc_{n_r,1}^\eta(\vL_{n_r,1}(m_r))$ 
	and $U_q(\Fg)$ 
	are semi-simple.) 
(\roii)  
follows from (\roi) 
by restricting $\Phi_{\Fg}$ to $_\CA U_q(\Fg)$. 
\end{proof}

\para 
For an $\Sc_{n,r}$-module $M$, 
we regard $M$ as a $U_q(\Fg)$-module through the homomorphism $\Phi_\Fg$. 
Then,  
by Lemma \ref{Lemma image Phi_g} (or by investigating  weights directly), 
we see that 
a simple $U_q(\Fg)$-module appearing in $M$ as a composition factor 
is the form $W(\la^{(1)}) \boxtimes \cdots \boxtimes W(\la^{(r)})$ for some $\la \in \vL_{n,r}^+(\Bm)$, 
where 
$W(\la^{(k)})$ is the highest weight $U_q(\Fgl_{m_k})$-module of highest weight $\la^{(k)}$. 
Hence, the Weyl module $W(\la)$ of $\Sc_{n,r}$ decomposes as follows: 
\begin{align} 
\label{def b_la_mu}
W(\la) \cong   \bigoplus_{\mu \in \vL_{n,r}^+(\Bm)} \Big(W(\mu^{(1)}) \boxtimes \cdots \boxtimes W(\mu^{(r)})\Big)^{\oplus \b_{\la\mu}} 
\quad \text{ as $U_q(\Fg)$-modules}.
\end{align}

\para 
In order to compute the multiplicity $\b_{\la\mu}$ in \eqref{def b_la_mu},  
we will describe the $U_q(\Fg)$-crystal structure on $W(\la)$. 
For such a purpose, 
we prepare some notation of combinatorics. 

For $\mu \in \vL_{n,r}(\Bm)$, 
the diagram $[\mu]$ of $\mu$ is 
the set 
$\{(i,j,k) \in \ZZ^3 \,|\, 1 \leq i \leq m_k, \, 1 \leq j \leq \mu_i^{(k)}, \, 1 \leq k \leq r\}$. 
For $\la \in \vL_{n,r}^+(\Bm)$ and $\mu \in \vL_{n,r}(\Bm)$, 
a tableau of shape $\la$ with weight $\mu$ 
is a map 
$T : [\la] \ra \{(a,c) \in \ZZ \times \ZZ \,|\, a \geq 1, 1 \leq c \leq r\}$ 
such that 
$\mu_i^{(k)} = \sharp \{x \in [\la] \,|\, T(x) = (i,k)\}$. 
We define the order on $\ZZ \times \ZZ$ 
by 
$(a,c) \geq (a',c')$ if either $c > c'$, or $c=c'$ and $a \geq a'$.
For a tableau $T$ of shape $\la$ with weight $\mu$, 
we say that $T$ is  semi-standard if $T$ satisfies the following conditions: 
\begin{enumerate}
\item If $T((i,j,k)) =(a,c)$, then $k \leq c$,
\item $T((i,j,k)) \leq T((i,j+1,k))$ if $(i,j+1,k) \in [\la]$, 
\item $T((i,j,k)) < T((i+1,j,k))$ if $(i+1,j,k) \in [\la]$. 
\end{enumerate}

For $\la \in \vL_{n,r}^+(\Bm)$, $\mu \in \vL_{n,r}(\Bm)$, 
we denote by 
$\CT_0(\la,\mu)$ 
the set of semi-standard tableaux of shape $\la$ with weight $\mu$. 
Put $\CT_0(\la) = \bigcup_{\mu \in \vL_{n,r}(\Bm)} \CT_0(\la,\mu)$. 
We identify a semi-standard tableau 
with a Young tableau 
as the following example. 
\\
For $\la =((3,2),(3,1),(1,1))$,  $\mu=((2,1),(2,2),(3,1))$ 
\[
T= 
\left( 
\,\,
\begin{array}{|c|c|c|}
\hline 
(1,1)&(1,1)&(1,2)
\\
\hline
(2,1)&(1,3)
\\
\cline{1-2}
\end{array}
\,,\,
\begin{array}{|c|c|c|}
\hline 
(1,2)&(2,2)&(1,3)
\\
\hline
(2,2)
\\
\cline{1-1}
\end{array}
\,,\,
\begin{array}{|c|}
\hline 
(1,3)
\\
\hline
(2,3)
\\
\cline{1-1}
\end{array}
\,\,
\right) 
\in \CT_0(\la,\mu), 
\]
where 
$T((1,1,1))=(1,1)$, $T((1,2,1))=(1,1)$, $\cdots$, $T(( 2,1,3))=(2,3)$. 

By \cite{DJM98}, 
it is known that 
there exists a bijection 
between 
$\CT_0(\la,\mu)$ and 
a basis of $W(\la)_\mu$. 
Hence, 
we will describe a $U_q(\Fg)$-crystal structure on $\CT_0(\la)$ 
which is isomorphic to the $U_q(\Fg)$-crystal on $W(\la)$.

\para 
By \eqref{def b_la_mu}, 
for $\la \in \vL_{n,r}^+(\Bm)$, $\mu \in \vL_{n,r}(\Bm)$, 
we have 
\begin{align}
\label{number semi-standard}
\sharp \CT_0(\la,\mu) 
&= \dim W(\la)_\mu 
\\
\notag 
&= \sum_{\nu \in \vL_{n,r}^+(\Bm)} \b_{\la\nu} \cdot \dim \Big( W(\nu^{(1)}) \boxtimes \cdots \boxtimes W(\nu^{(r)}) \Big)_\mu 
\\
\notag 
&= \sum_{\nu \in \vL_{n,r}^+(\Bm)} \b_{\la\nu} \prod_{k=1}^r \dim W(\nu^{(k)})_{\mu^{(k)}} 
\\
\notag 
&= \sum_{\nu \in \vL_{n,r}^+(\Bm)} \b_{\la\nu} \prod_{k=1}^r \sharp \CT_0(\nu^{(k)}, \mu^{(k)})  
\\
\notag 
&= \sum_{\nu \in \vL_{n,r}^+(\Bm)} \b_{\la\nu} \prod_{k=1}^r K_{\nu^{(k)} \mu^{(k)}}, 
\end{align}
where 
$K_{\nu^{(k)} \mu^{(k)}}$ is the Kostka number. 
We have the following properties of $\b_{\la\mu}$.

\begin{lem}\ 
\label{Lemma properties b_la_mu}
\begin{enumerate}
\item 
For $\la \in \vL_{n,r}^+(\Bm)$, 
$\b_{\la\la}=1$. 

\item 
For $\la,\mu \in \vL_{n,r}^+(\Bm)$, 
if $\b_{\la\mu} \not=0$, we have $\la \geq \mu$. 

\item 
For $\la,\mu \in \vL_{n,r}^+(\Bm)$,  
if $\la \not= \mu$ and $\zeta (\la) = \zeta (\mu)$, 
we have 
$\b_{\la \mu} =0$. 

\item 
For 
$\la,\mu \in \vL_{n,r}^+(\Bm)$ such that $\zeta(\la) \not= \zeta (\mu)$, 
if 
$\CT_0(\la,\nu) = \emptyset $ 
for any $\nu \in \vL_{n,r}^+(\Bm)$ such that $\zeta( \nu) = \zeta (\mu)$ and $\nu > \mu$, 
then 
we have 
$\b_{\la\mu} = \sharp \CT_0(\la,\mu)$. 
\end{enumerate}
\end{lem}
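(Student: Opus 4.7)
The plan is to prove (ii) first by a highest-weight argument, derive (i) and (iii) together via the cyclic structure of $W(\la)$, and obtain (iv) by inverting the triangular relation \eqref{number semi-standard}.

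For (ii), I would use that $W(\la)=\Sc_{n,r}^{-}\cdot v_\la$ is cyclically generated, and $\Sc_{n,r}^{-}$ is generated by the $F_{(i,k)}$ (for $(i,k)\in\vG'(\Bm)$), each of which lowers the weight by $\a_{(i,k)}$. Hence every weight $\mu$ of $W(\la)$ satisfies $\la-\mu\in Q^+$. If $\b_{\la\mu}\neq 0$ then $\mu$ is the highest weight of some $U_q(\Fg)$-irreducible summand, in particular a weight of $W(\la)$, so $\la\geq\mu$.

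For (i) and (iii), the key step will be to show that whenever $\zeta(\mu)=\zeta(\la)$ one has $W(\la)_\mu\subseteq U_q(\Fg)\cdot v_\la$. The point is that $W(\la)_\mu$ is spanned by monomials $F_{i_1}\cdots F_{i_N}v_\la$ with $\sum_j \a_{i_j}=\la-\mu$; if $a_k$ denotes the number of cross-component factors $F_{(m_k,k)}$ ($1\le k\le r-1$), the cross-component contribution to the $\zeta$-coordinate equals $\sum_{k=1}^{r-1}a_k(\ve_k-\ve_{k+1})\in\ZZ^r$, which vanishes iff $a_1=\cdots=a_{r-1}=0$ by linear independence of the $\ve_k-\ve_{k+1}$. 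So only within-component $F_{(i,k)}$ occur, and these all lie in $U_q(\Fg)$. Since $v_\la$ is killed by every within-component $E_{(j,l)}$ (as $v_\la$ is killed by every $E_{(j,l)}$ with $(j,l)\in\vG'(\Bm)$) and has weight $\la$, the submodule $U_q(\Fg)\cdot v_\la$ is the irreducible $U_q(\Fg)$-module $W(\la^{(1)})\boxtimes\cdots\boxtimes W(\la^{(r)})$. Any $U_q(\Fg)$-highest weight vector of weight $\mu$ with $\zeta(\mu)=\zeta(\la)$ in $W(\la)$ then lies inside this irreducible summand, so must have $\mu=\la$ and lies in the one-dimensional top weight space $\CK v_\la$. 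This yields $\b_{\la\mu}=0$ for $\mu\neq\la$ (part (iii)) and $\b_{\la\la}=1$ (part (i)).

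For (iv), in the identity \eqref{number semi-standard} a nonzero summand requires $|\nu^{(k)}|=|\mu^{(k)}|$ and $\nu^{(k)}\geq\mu^{(k)}$ for each $k$, hence $\zeta(\nu)=\zeta(\mu)$ and $\nu\geq\mu$ in $P$. Isolating $\nu=\mu$, which contributes $\b_{\la\mu}$ since $K_{\mu^{(k)}\mu^{(k)}}=1$, rewrites \eqref{number semi-standard} as $\sharp\CT_0(\la,\mu)=\b_{\la\mu}+\sum_{\nu>\mu,\,\zeta(\nu)=\zeta(\mu)}\b_{\la\nu}\prod_k K_{\nu^{(k)}\mu^{(k)}}$. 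For each such $\nu$, applying \eqref{number semi-standard} at $\nu$ and using non-negativity yields $\b_{\la\nu}\leq\sharp\CT_0(\la,\nu)=0$ by hypothesis, so the residual sum vanishes and $\b_{\la\mu}=\sharp\CT_0(\la,\mu)$. The hard part is (iii): once $W(\la)_\mu$ is trapped inside the irreducible $U_q(\Fg)\cdot v_\la$ via the linear-independence observation on the $\ve_k-\ve_{k+1}$, the remaining assertions follow essentially formally.
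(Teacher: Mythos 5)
Your proposal is correct; parts (i), (ii) and (iv) run essentially as in the paper, while part (iii) takes a genuinely different route. For (ii) you use the same observation that $W(\la)=\Sc_{n,r}^-\cdot v_\la$ forces every weight of $W(\la)$ to lie below $\la$; for (iv) you isolate the $\nu=\mu$ term of \eqref{number semi-standard} exactly as the paper does, justifying $\b_{\la\nu}=0$ when $\CT_0(\la,\nu)=\emptyset$ by non-negativity of the summands rather than by $\b_{\la\nu}\not=0\Rightarrow\dim W(\la)_\nu\not=0$ --- an equivalent observation. The real divergence is (iii). The paper argues combinatorially: when $\zeta(\la)=\zeta(\mu)$ the constraint $k\leq c$ on entries forces each component of a semi-standard tableau to be filled with entries from its own component, so $\sharp\CT_0(\la,\mu)=\prod_{k}K_{\la^{(k)}\mu^{(k)}}$; the $\nu=\la$ term of \eqref{number semi-standard} then already exhausts the left-hand side, and non-negativity of the remaining terms kills $\b_{\la\mu}$. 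You instead argue representation-theoretically: since the simple roots $\a_{(i,k)}$ with $i<m_k$ are annihilated by the componentwise size map while the $\a_{(m_k,k)}$ are sent to linearly independent vectors of $\ZZ^r$, the unique expression of $\la-\mu$ as a non-negative combination of simple roots contains no cross-component root when $\zeta(\la)=\zeta(\mu)$, which traps $W(\la)_\mu$ inside $U_q(\Fg)\cdot v_\la\cong W(\la^{(1)})\boxtimes\cdots\boxtimes W(\la^{(r)})$; an irreducible highest weight module admits no $U_q(\Fg)$-singular vector of weight $\mu\not=\la$, giving (iii), and the one-dimensionality of $W(\la)_\la$ gives (i). Both arguments are sound. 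Yours buys a structural explanation of the vanishing (it is essentially the same mechanism the paper deploys later for $x_{\la\mu}$ in Lemma \ref{Lemma properties x_la_mu}(iii)), at the price of invoking irreducibility of finite-dimensional highest weight $U_q(\Fg)$-modules over $\CK$; the paper's version is a pure tableau count resting only on \eqref{number semi-standard}.
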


\begin{proof}
(\roi)
From the definition of $W(\la)$, 
we have 
$W(\la) = \Sc_{n,r}^- \cdot v_\la$, 
where we denote 
$1 \otimes v_\la \in \Sc_{n,r} \otimes_{\Sc_{n,r}^{\geq 0}} \theta_\la $ by $v_\la$ simply. 
Thus, 
we have that 
$W(\la)_\la = \CK v_\la$, 
and that 
$v_\la$ is a highest weight vector of highest weight $\la$ in 
$U_q(\Fg)$-module $W(\la)$. 
This implies that 
$\b_{\la\la}=1$. 

(\roii) 
$\b_{\la \mu} \not=0 \Rightarrow W(\la)_\mu \not=0 \Rightarrow \la \geq \mu$. 

(\roiii) 
Assume that 
$\la \not=\mu$ and $\zeta(\la)=\zeta(\mu)$. 
By \eqref{number semi-standard}, 
we have 
\begin{align}
\label{number semi-standard same}
\sharp \CT_0(\la,\mu) 
= 
&\b_{\la\la} \prod_{k=1}^r \sharp \CT_0(\la^{(k)}, \mu^{(k)}) 
+ \b_{\la\mu} \prod_{k=1}^r \sharp \CT_0(\mu^{(k)}, \mu^{(k)}) 
\\
\notag 
&\quad + \sum_{\nu \in \vL_{n,r}^+(\Bm) \atop \nu \not=\la,\mu} \b_{\la\nu} \prod_{k=1}^r \sharp \CT_0(\nu^{(k)}, \mu^{(k)}). 
\end{align}
This 
implies that 
$\b_{\la\mu}=0$ 
since  
$\sharp \CT_0(\mu^{(k)},\mu^{(k)})=1$,  
and 
$\sharp \CT_0(\la,\mu) = \prod_{k=1}^r  \sharp \CT_0(\la^{(k)}, \mu^{(k)})$ 
if $\zeta(\la)=\zeta(\mu)$.   

(\roiv) 
Note that $\prod_{k=1}^r \sharp \CT_0(\nu^{(k)}, \mu^{(k)}) =0$ if $\zeta(\nu) \not= \zeta (\mu)$ or $\nu \not \geq \mu$, 
and that 
$\prod_{k=1}^r \sharp \CT_0(\nu^{(k)}, \mu^{(k)}) = \CT_0(\nu,\mu)$ 
if $\zeta(\nu)= \zeta(\mu)$.  
Then \eqref{number semi-standard} combining with the assumption of (\roiv) 
implies 
$\sharp \CT_0(\la,\mu) = \b_{\la\mu} \sharp \CT_0(\mu,\mu) = \b_{\la\mu}$ 
since 
$\b_{\la\nu}=0$ if $\CT_0(\la,\nu)=\emptyset$. 
\end{proof}

\para 
\label{def crystal T(la) start}
For $\la \in \vL_{n,r}(\Bm)$, 
we define the total order \lq\lq $ \succeq $" on the diagram $[\la]$ 
by 
$(i,j,k) \succ (i',j',k')$ 
if  $k > k'$,  $k=k'$ and $ j > j'$  or if  $ k=k', j=j' $ and  $i < i'$.   
For an example, we have 
\[ (5,4,2) \succ (2,3,2) \succ (5,3,2) \succ (6,4,1). \]

\para 
\label{Definition equiv. on CT_0(la)}
We define the equivalence relation \lq\lq $\sim $" on $\CT_0(\la)$ 
by 
$T \sim T'$ if 
$\{x \in [\la] \,|\, T(x) = (i,k) \text{ for some } i=1,\cdots, m_k\} = \{ y \in [\la] \,|\, T'(y) = (j,k) \text{ for some } j=1,\cdots, m_k\}$ 
for any $k=1,\cdots, r$. 
By the definition,  
for $T \in \CT_0(\la,\mu)$ and $T' \in \CT_0(\la,\nu)$, 
we have 
\begin{align}
\label{zeta invariant}
\zeta(\mu)= \zeta(\nu) \text{ if } T \sim T'. 
\end{align}

\example 
Put 
\begin{align*} 
&
T_1=
\left( \, 
\begin{array}{|c|c|}
\hline 
(1,1)&(1,1)
\\
\hline
(1,2)&(2,2)
\\
\cline{1-2}
\end{array}
\, , \,
\begin{array}{|c|c|}
\hline 
(1,2)&(2,2)
\\
\hline
(3,2)
\\
\cline{1-1}
\end{array}
\,\right), 
\,\,
T_2=
\left( \, 
\begin{array}{|c|c|}
\hline 
(1,1)&(2,1)
\\
\hline
(1,2)&(3,2)
\\
\cline{1-2}
\end{array}
\, , \,
\begin{array}{|c|c|}
\hline 
(2,2)&(2,2)
\\
\hline
(4,2)
\\
\cline{1-1}
\end{array}
\,\right), 
\\[3mm]
&
T_3=
\left( \, 
\begin{array}{|c|c|}
\hline 
(1,1)&(1,2)
\\
\hline
(2,1)&(3,2)
\\
\cline{1-2}
\end{array}
\, , \,
\begin{array}{|c|c|}
\hline 
(2,2)&(2,2)
\\
\hline
(4,2)
\\
\cline{1-1}
\end{array}
\,\right), 
\,\, 
T_4=
\left( \, 
\begin{array}{|c|c|}
\hline 
(1,1)&(2,2)
\\
\hline
(3,1)&(3,2)
\\
\cline{1-2}
\end{array}
\, , \,
\begin{array}{|c|c|}
\hline 
(1,2)&(1,2)
\\
\hline
(2,2)
\\
\cline{1-1}
\end{array}
\,\right).
\end{align*}
Then, 
we have 
$T_1 \sim T_2$, $T_2 \not\sim T_3$ and $T_3 \sim T_4$.

\para 
\label{Definition reading}
Let $V_{m_k}$ be the vector representation of $U_q(\Fgl_{m_k})$ 
with a natural basis $\{ v_1, v_2, \cdots, v_{m_k}\}$. 
Let $\CA_0$ be the localization of $\QQ(Q_1,\cdots,Q_r) [q]$ at $q=0$. 
Put 
$\CL_{m_k} = \bigoplus_{j=1}^{m_k} \CA_0 \cdot v_j$,  
$\bo{j} = v_j + q \CL_{m_k} \in \CL_{m_k}/ q \CL_{m_k}$ 
and 
$\CB_{m_k} = \big\{ \bo{j} \,|\, 1 \leq j \leq m_k \big\} $. 
Then 
$(\CL_{m_k}, \CB_{m_k})$ gives the crystal basis of $V_{m_k}$.  
We denote by 
$\CB_{m_1}^{\otimes n_1} \boxtimes \cdots \boxtimes \CB_{m_r}^{\otimes n_r}$ 
the $U_q(\Fg)$-crystal corresponding to 
$U_q(\Fg)$-module 
$V_{m_1}^{\otimes n_1} \boxtimes \cdots \boxtimes V_{m_r}^{\otimes n_r}$.

Let 
$\CT_0(\la) = \bigcup_{t} \CT_0(\la)[t]$  
be the decomposition to equivalence classes 
with respect to the equivalence relation \lq\lq $\sim$".  
For an each equivalence class $\CT_0(\la)[t]$, 
put 
$(n_1,\cdots,n_r) = \zeta (\mu)$ 
for some $\mu$ such that $\CT_0(\la,\mu) \cap \CT_0(\la)[t] \not= \emptyset$ 
(note \eqref{zeta invariant}), 
and 
we define the map 
$\Psi_t^\la : \CT_0(\la)[t] \ra \CB_{m_k}^{\otimes n_1} \boxtimes \cdots \boxtimes \CB_{m_k}^{\otimes n_r}$ 
as 
\begin{align*} 
&\Psi_t^\la (T) =\big( \bo{i_1^{(1)}} \otimes \cdots \otimes \bo{i_{n_1}^{(1)}} \big)
	\boxtimes \cdots \boxtimes  \big( \bo{i_1^{(r)}} \otimes \cdots \otimes \bo{i_{n_r}^{(r)}} \big) 
\end{align*}
satisfying  the following three conditions: 
\begin{enumerate} 
\item 
$\{ x \in [\la] \,|\, T(x)= (i,k) \text{ for some } i=1,\cdots, m_k\} = \{ x_1^{(k)}, x_2^{(k)}, \cdots, x_{n_k}^{(k)} \}$ 
\\
for  $k=1,\cdots,r$. 

\item 
$x_1^{(k)} \succ x_{2}^{(k)} \succ \cdots \succ x_{n_k}^{(k)}$ 
for $k=1,\cdots, r$. 

\item 
$T(x_j^{(k)}) = (i_j^{(k)},k)$ for $1 \leq j \leq n_k$, $1 \leq k \leq r$. 
\end{enumerate}
Namely, 
$\big( \bo{i_1^{(k)}} \otimes \cdots \otimes \bo{i_{n_k}^{(k)}} \big)$ 
in $\Psi_t^\la (T)$ 
is obtained by reading 
the first coordinate of $T(x)$ 
for 
$x \in  [\la]$ 
such that 
$T(x)=(i,k)$ for some $i=1,\cdots,m_k$ 
in the order \lq\lq $ \succeq$" on $[\la]$.

\example 
For 
\[
T= 
\left( 
\,\,
\begin{array}{|c|c|c|}
\hline 
(1,1)&(1,1)&(1,2)
\\
\hline
(2,1)&(1,3)
\\
\cline{1-2}
\end{array}
\,,\,
\begin{array}{|c|c|c|}
\hline 
(1,2)&(2,2)&(1,3)
\\
\hline
(2,2)
\\
\cline{1-1}
\end{array}
\,,\,
\begin{array}{|c|}
\hline 
(1,3)
\\
\hline
(2,3)
\\
\cline{1-1}
\end{array}
\,\,
\right) 
\in \CT_0(\la)[t], 
\]
we have 
$\Psi_t^\la(T)=
	\big( \bo{1} \otimes \bo{1} \otimes \bo{2} \big) 
	\boxtimes 
	\big( \bo{2} \otimes \bo{1} \otimes \bo{2} \otimes \bo{1} \big)
	\boxtimes 
	\big(\bo{1} \otimes \bo{2} \otimes \bo{1} \otimes \bo{1} \big)
$.

\remark 
In the case where $r=1$, 
$\CT_0(\la)$ has only one equivalence class (itself) with respect to \lq\lq $\sim$", 
and 
$\Psi^\la$ coincides with the Far-Eastern reading given in \cite[\S 3]{KN} (see also \cite[Ch. 7]{HK}).

\para 
Let 
$\wt{e}_{(i,k)}$, $\wt{f}_{(i,k)}$ ($(i,k) \in \vG'_\Fg(\Bm)$) 
be the Kashiwara operators on $U_q(\Fg)$-crystal $ \CB_{m_k}^{\otimes n_1} \boxtimes \cdots \boxtimes \CB_{m_k}^{\otimes n_r}$. 
Then we have the following proposition.

\begin{prop}\
\label{Proposition reading}
For an each equivalence class $\CT_0(\la)[t]$ of $\CT_0(\la)$,  
we have the followings. 
\begin{enumerate}
\item 
The map $\Psi_t^\la : \CT_0(\la)[t] \ra \CB_{m_k}^{\otimes n_1} \boxtimes \cdots \boxtimes \CB_{m_k}^{\otimes n_r}$ 
is injective. 

\item   
$\Psi_t^\la \big( \CT_0(\la)[t] \big) \cup \{0\}$ 
is stable under the Kashiwara operators $\wt{e}_{(i,k)}$, $\wt{f}_{(i,k)}$ $\big( (i,k) \in \vG'_\Fg(\Bm) \big)$ .
\end{enumerate}
\end{prop}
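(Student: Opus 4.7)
Part (i) is essentially a reconstruction argument. The equivalence relation of Definition~\ref{Definition equiv. on CT_0(la)} fixes, for each color $k$, the subset $S_k := \{x \in [\la] \mid T(x) = (i,k) \text{ for some } i\}$; so $S_k$ depends only on the class $t$, not on $T \in \CT_0(\la)[t]$. The total order $\succ$ on $[\la]$ then uniquely enumerates $S_k$ as $x_1^{(k)} \succ \cdots \succ x_{n_k}^{(k)}$, and by construction the $j$-th entry of the $k$-th tensor factor of $\Psi_t^\la(T)$ is $\bo{i_j^{(k)}}$, with $T(x_j^{(k)}) = (i_j^{(k)},k)$. Hence $T$ is recovered from the pair $(t, \Psi_t^\la(T))$.

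For (ii), observe first that $\wt{e}_{(i,k)}$ and $\wt{f}_{(i,k)}$ lie in the $k$-th factor of $U_q(\Fg) \cong U_q(\Fgl_{m_1}) \otimes \cdots \otimes U_q(\Fgl_{m_r})$, so they act on $\CB_{m_1}^{\otimes n_1} \boxtimes \cdots \boxtimes \CB_{m_r}^{\otimes n_r}$ only through the $k$-th tensor factor. By the tensor product rule, they either annihilate the element or modify exactly one entry of this factor, changing it between $i$ and $i+1$. Consequently $S_k$ is preserved (so the class $t$ is preserved), and the only change in the underlying tableau is a single $\pm 1$ adjustment at one color-$k$ position. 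To establish stability it therefore suffices to show that the altered function $T'$ is still semi-standard. Conditions relating positions of two distinct colors are automatic because they depend only on colors under the order on $\ZZ \times \ZZ$, so only the row-weak and column-strict conditions among positions of color exactly $k$ need verification.

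The key structural point is that, within each component $[\la^{(k')}]$ with $k' \leq k$, semi-standardness of $T$ forces the positions of color $<k$ and of color $\leq k$ to form upper-left-closed sub-partitions $\mu \subseteq \nu \subseteq \la^{(k')}$; hence $S_k \cap [\la^{(k')}] = \nu/\mu$ is a skew shape. Moreover the order $\succ$ restricted to this skew shape is exactly the Far-Eastern order of \cite[\S 3]{KN}, so the color-$k$ restriction of $\Psi_t^\la(T)$ to component $k'$ is the Far-Eastern reading of the semi-standard skew tableau $T|_{\nu/\mu}$ with entries in $\{1,\ldots,m_k\}$. By \cite{KN} (extended to skew shapes in the standard way), this reading realizes the $U_q(\Fgl_{m_k})$-crystal of such skew tableaux as a subcrystal of $\CB_{m_k}^{\otimes|\nu/\mu|}$. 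The full $k$-th factor of $\Psi_t^\la(T)$ is the tensor concatenation of these local readings over $k' = r, r-1, \ldots, 1$, and the tensor product rule transports stability to this concatenation. Applying $\wt{e}_{(i,k)}$ or $\wt{f}_{(i,k)}$ therefore yields either zero or the reading of a semi-standard tableau $T' \in \CT_0(\la)[t]$, as required.

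The main obstacle is the combinatorial structural lemma just outlined, i.e.\ identifying $S_k \cap [\la^{(k')}]$ as a skew shape $\nu/\mu$ and recognizing the restricted reading as the classical Far-Eastern reading on that shape. Once this identification is in place, the proposition follows by a clean transport of Kashiwara-Nakashima crystal stability through the tensor product rule, with no further case analysis required.
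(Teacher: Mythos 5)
Your proposal is correct and follows the same route the paper takes: part (i) is the reconstruction-from-definitions argument, and part (ii) reduces to the Kashiwara--Nakashima stability result for (skew) Far-Eastern readings via the tensor product rule, which is exactly what the paper means by ``proven in a similar way as in the case of type $A$''. Your version usefully supplies the details the paper omits, in particular the identification of the colour-$k$ positions in each component as a skew shape read in Far-Eastern order.
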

\begin{proof} 
(\roi) is clear from the definitions. 
(\roii) is proven in a similar way as in the case of type $A$ ($r=1$)
(see \cite{KN} or \cite[Theorem 7.3.6]{HK}). 
\end{proof}

\para 
\label{def crystal T(la) end}
By Proposition \ref{Proposition reading}, 
we  define the $U_q(\Fg)$-crystal structure on $\CT_0(\la)[t]$ through $\Psi_t^\la$, 
and 
also define  the $U_q(\Fg)$-crystal structure on $\CT_0(\la)$.  
Note that 
the $U_q(\Fg)$-crystal graphs of $\CT_0(\la)[t]$ 
and of   $\CT_0(\la)[t']$ 
are disconnected 
in the $U_q(\Fg)$-crystal graph of $\CT_0(\la)$ 
if 
$\CT_0(\la)[t]$ 
is a different equivalence class  
from $\CT_0(\la)[t']$. 
For $T \in \CT_0(\la)$, 
we say that 
$T$ is $U_q(\Fg)$-singular 
if 
$\wt{e}_{(i,k)} \cdot T =0$ 
for any $(i,k) \in \vG'_{\Fg}(\Bm)$.

\remark 
We should define the map $\Psi_t^\la$ for each  equivalence class $\CT_0(\la)[t]$ of $\CT_0(\la)$ 
since it may happen that 
$\Psi_t^\la (T) = \Psi_{t'}^\la(T')$ 
for different equivalence classes 
$\CT_0(\la)[t]$ and $\CT_0(\la)[t']$. 
For an example, 
put 
\begin{align*}
&T= 
\left( 
\,\,
\begin{array}{|c|c|c|}
\hline 
(1,1)&(1,1)&\mathbf{(1,2)}
\\
\hline
(2,1)&\mathbf{(1,3)}
\\
\cline{1-2}
\end{array}
\,,\,
\begin{array}{|c|c|c|}
\hline 
(1,2)&(2,2)&(1,3)
\\
\hline
(2,2)
\\
\cline{1-1}
\end{array}
\,,\,
\begin{array}{|c|}
\hline 
(1,3)
\\
\hline
(2,3)
\\
\cline{1-1}
\end{array}
\,\,
\right) 
\in \CT_0(\la)[t], 
\\[3mm]
&T'= 
\left( 
\,\,
\begin{array}{|c|c|c|}
\hline 
(1,1)&(1,1)&\mathbf{(1,3)}
\\
\hline
(2,1)&\mathbf{(1,2)}
\\
\cline{1-2}
\end{array}
\,,\,
\begin{array}{|c|c|c|}
\hline 
(1,2)&(2,2)&(1,3)
\\
\hline
(2,2)
\\
\cline{1-1}
\end{array}
\,,\,
\begin{array}{|c|}
\hline 
(1,3)
\\
\hline
(2,3)
\\
\cline{1-1}
\end{array}
\,\,
\right) 
\in \CT_0(\la)[t'].  
\end{align*}
Then 
we have 
\[\Psi_t^\la(T)= \Psi_{t'}^\la(T')= 
	\big( \bo{1} \otimes \bo{1} \otimes \bo{2} \big) 
	\boxtimes 
	\big( \bo{2} \otimes \bo{1} \otimes \bo{2} \otimes \bo{1} \big)
	\boxtimes 
	\big(\bo{1} \otimes \bo{2} \otimes \bo{1} \otimes \bo{1} \big)
.
\]

Now, we have the following theorem. 

\begin{thm}\
\label{Theorem b_la_mu}
\begin{enumerate}
\item 
For $\la, \mu \in \vL_{n,r}^+(\Bm)$, 
we have 
\[
\b_{\la\mu} = \sharp \big\{ T \in \CT_0(\la,\mu) \bigm| T \text{ : $U_q(\Fg) $-singular} \big\}.
\] 
\item 
$U_q(\Fg)$-crystal  structure  on $T_0(\la)$ 
is isomorphic to 
the $U_q(\Fg)$-crystal basis of  $W(\la)$ 
as crystals.  
\end{enumerate}
\end{thm}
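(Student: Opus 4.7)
The plan is to prove (ii) first and then derive (i) as an immediate consequence. Granting (ii), a highest-weight element of weight $\mu$ in the $U_q(\Fg)$-crystal on $\CT_0(\la)$ is, by construction, precisely a $U_q(\Fg)$-singular tableau $T\in\CT_0(\la,\mu)$. Since $\b_{\la\mu}$ is the multiplicity of the irreducible $W(\mu^{(1)})\boxtimes\cdots\boxtimes W(\mu^{(r)})$ in $W(\la)$ by \eqref{def b_la_mu}, and for a direct sum of irreducible highest-weight $U_q(\Fg)$-modules such multiplicities are read off the crystal as the number of highest-weight elements of that weight, (i) follows from (ii).

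For (ii) I would start from the DJM cellular basis of \cite{DJM98}, which gives a basis of $W(\la)_\mu$ indexed by $\CT_0(\la,\mu)$; in particular the $U_q(\Fg)$-weight multiplicities of $W(\la)$ and those of the candidate crystal $\CT_0(\la)$ already coincide. Next I would observe that the Kashiwara operators $\wt{e}_{(i,k)}, \wt{f}_{(i,k)}$ only shift the first coordinate of an entry with fixed color $k$, so they preserve the map $\zeta$, and in particular preserve each equivalence class under $\sim$. The matching therefore splits class by class: for a fixed $\CT_0(\la)[t]$ with $\zeta(\mu)=(n_1,\dots,n_r)$ for $\mu$ in the class, I have to identify the combinatorial crystal supplied by $\Psi_t^\la$ (Proposition \ref{Proposition reading}) with the module-theoretic $U_q(\Fg)$-crystal on $W(\la)[\eta]:=\bigoplus_{\mu:\zeta(\mu)=\eta}W(\la)_\mu$.

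For this identification I would invoke Lemma \ref{Lemma image Phi_g}: on $W(\la)[\eta]$ the algebra $\Phi_\Fg(U_q(\Fg))$ acts through $\Sc_{n_1,1}^\eta\ot\cdots\ot\Sc_{n_r,1}^\eta$, a tensor product of type-$A$ $q$-Schur algebras. Hence $W(\la)[\eta]$ is a direct sum of external tensor products of type-$A$ Weyl modules $W(\nu^{(1)})\boxtimes\cdots\boxtimes W(\nu^{(r)})$, each of whose crystal basis is realized on $\Fgl_{m_k}$-semistandard tableaux via the Far-Eastern reading of \cite{KN}. On the combinatorial side, within one equivalence class the subset $S_k\subset[\la]$ of color-$k$ cells is fixed, so the reading $\Psi_t^\la$ decouples into $r$ independent Far-Eastern-type readings placed in the external tensor product, and Proposition \ref{Proposition reading} shows it lands in the tensor-product crystal. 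Matching these two descriptions of $W(\la)[\eta]$ class by class, and summing over all classes and all $\eta$, yields the isomorphism of $U_q(\Fg)$-crystals claimed in (ii).

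The main obstacle is the verification, within a single equivalence class, that the combinatorial Kashiwara operators transported via $\Psi_t^\la$ and the DJM basis really coincide with the module-theoretic ones. In the classical case $r=1$ this is exactly the Kashiwara-Nakashima theorem, whose proof combines the tensor-product rule with the fact that the Far-Eastern reading carries each semistandard tableau to a crystal element in the correct connected component. For $r\geq 2$ the decoupling just described keeps all crystal-theoretic work within type $A$, so no new crystal input is needed; the only point to check directly is that the total order $\succ$ on $[\la]$ restricts on each color-strip $S_k$ to the Far-Eastern order on the corresponding $\Fgl_{m_k}$-subtableau, which is an immediate inspection of the definition.
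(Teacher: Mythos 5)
Your overall strategy is sound but runs in the opposite direction from the paper's: the paper proves (i) first, by induction on the dominance order (using the expansion \eqref{number semi-standard} and Lemma \ref{Lemma properties b_la_mu}), and then obtains (ii) as a one-line consequence, whereas you prove (ii) first and read off (i) by counting highest-weight elements. In substance the two arguments rest on the same three ingredients: Proposition \ref{Proposition reading}, which makes $\CT_0(\la)$ a disjoint union of connected highest-weight $U_q(\Fg)$-crystals; the equality $\dim W(\la)_\mu=\sharp\CT_0(\la,\mu)$ coming from the DJM basis; and the semisimplicity of $W(\la)$ as a $U_q(\Fg)$-module over $\CK$. The paper's induction on $\mu$ is exactly the unitriangular character comparison that your approach needs in order to close: two normal crystals (equivalently, two semisimple $U_q(\Fg)$-modules) with the same weight generating function are isomorphic, because the characters of the irreducibles are linearly independent. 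Once that is said, your proof of (ii) is complete and nothing further is required; your version buys a cleaner conceptual statement, while the paper's buys an explicit count of the non-singular tableaux at each weight.

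Two points in your write-up need repair. First, the ``class by class'' matching cannot be carried out as literally described: a single equivalence class $\CT_0(\la)[t]$ does not have the weight multiplicities of $W(\la)[\eta]$ (several classes share the same $\eta$), and $W(\la)[\eta]$ carries no canonical module-theoretic decomposition indexed by the classes $t$, so the comparison must be made at the level of characters, summed over all classes with a given $\eta$. Second, the ``main obstacle'' you flag --- that the combinatorial Kashiwara operators transported through the DJM basis coincide with module-theoretic ones --- is not needed for (ii), which asserts only an abstract isomorphism of crystals; nor would the proposed decoupling establish it, since the colour-$k$ cells of a tableau in $\CT_0(\la)[t]$ form a skew shape rather than the straight shape to which the Kashiwara--Nakashima realization of \cite{KN} directly applies. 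Dropping that step and replacing the ``matching'' by the linear-independence-of-characters argument yields a correct proof.
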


\begin{proof}
We prove (\roi) by an induction for dominance order \lq\lq $\geq$" on $\vL_{n,r}^+(\Bm)$. 
First, 
we assume that 
$\CT_0(\la,\mu) \not= \emptyset$ 
and 
$\CT_0(\la,\nu) = \emptyset$ 
for any $\nu$ such that 
$\zeta (\nu) = \zeta (\mu)$ and $\nu > \mu$. 
For $(i,k) \in \vG'_{\Fg}(\Bm)$,  
one see easily that $\wt{e}_{(i,k)} \cdot T \in \CT_0(\la, \mu + \a_{(i,k)})$, 
$\zeta (\mu + \a_{(i,k)}) = \zeta (\mu)$ 
and 
$\mu + \a_{(i,k)}  > \mu$. 
Then,  
for any $T \in \CT_0(\la,\mu)$ and any $(i,k) \in \vG'_{\Fg}(\Bm)$, 
we have 
$\wt{e}_{(i,k)} \cdot T =0$ by the assumption. 
Thus, we have $\sharp \CT_0(\la,\mu) =  \sharp\{ T \in \CT_0(\la,\mu) \,|\, T \text{ : $U_q(\Fg) $-singular}\}$.  
Combining with  Lemma \ref{Lemma properties b_la_mu} (\roiv), 
we have 
$\b_{\la\mu} = \sharp\{ T \in \CT_0(\la,\mu) \,|\, T \text{ : $U_q(\Fg) $-singular}\}$. 

Next, as the assumption of the induction, 
we assume the claim of (\roi) 
for $\nu \in \vL_{n,r}^+(\Bm)$ 
such that $\zeta (\nu)= \zeta (\mu)$ and $\nu > \mu$.  
It is clear that 
\[ 
	\dim W(\la)_\mu
		= \sum_{\nu \in \vL_{n,r}^+(\Bm) \atop \zeta (\nu )= \zeta (\mu), \nu \geq \mu} 
		\b_{\la \nu} \cdot \dim \big( W(\nu^{(1)}) \boxtimes \cdots \boxtimes W(\nu^{(r)}) \big)_\mu.  
\]
Thus, we have 
\begin{align} 
\label{b_la_mu W(la) - some}
\b_{\la\mu} = \dim W(\la)_\mu 
	-  \sum_{\nu \in \vL_{n,r}^+(\Bm) \atop \zeta (\nu )= \zeta (\mu), \nu > \mu} 
		\b_{\la \nu} \cdot \dim \big( W(\nu^{(1)}) \boxtimes \cdots \boxtimes W(\nu^{(r)}) \big)_\mu. 
\end{align}
If 
$T \in \CT_0(\la,\mu)$ is not $U_q(\Fg)$-singular, 
there exists a sequence 
$(i_1,k_1), \cdots, (i_l, k_l) \in \vG'_{\Fg}(\Bm)$ 
such that 
$\wt{e}_{(i_1,k_1)} \cdots \wt{e}_{(i_l, k_l)} \cdot T \in \CT_0(\la, \mu + \a_{(i_1,k_1)} + \cdots + \a_{(i_l,k_l)} )$ 
is $U_q(\Fg)$-singular. 
Thus, by the assumption of the induction, we have 
\begin{align}
\label{some non singular}
	\sum_{\nu \in \vL_{n,r}^+(\Bm) \atop \zeta (\nu )= \zeta (\mu), \nu > \mu}
	& \b_{\la \nu} \cdot \dim \big( W(\nu^{(1)}) \boxtimes \cdots \boxtimes W(\nu^{(r)}) \big)_\mu 
\\ \notag
	&= 
	\sharp \big\{ T \in \CT_0(\la,\mu)  \bigm| \wt{e}_{(i,k)} \cdot T \not=0 \text{ for some } (i,k) \in \vG'_{\Fg} \big\}.
\end{align}
Since $\dim W(\la)_\mu = \CT_0(\la,\mu)$, 
\eqref{b_la_mu W(la) - some} and \eqref{some non singular} 
imply that 
$\b_{\la\mu} = \sharp \big\{ T \in \CT_0(\la,\mu) \bigm| T \text{ : $U_q(\Fg) $-singular} \big\}$. 

(\roii) follows from (\roi) and the definition of $\Psi_t^\la$. 
\end{proof}



\section{Some properties of the number  $\b_{\la\mu}$} 
In this section, 
we collect some properties of the number $\b_{\la\mu}$. 
For some extreme partitions, we have the following lemma. 

\begin{lem}\
\begin{enumerate} 
\item 
If $\la =((n), \emptyset, \cdots, \emptyset)$, 
\\
$\b_{\la\mu} = 
	\begin{cases} 
		1 & \text{if } \mu=((n_1), (n_2),  \cdots, (n_r)) \text{ for some } (n_1,\cdots, n_r) \in \ZZ_{\geq 0}^r
		\\
		0 &\text{otherwise} 
	\end{cases}
$. 

\item 
If $\la =((1^n), \emptyset, \cdots, \emptyset)$, 
\\
$\b_{\la\mu} = 
	\begin{cases} 
		1 & \text{if } \mu=((1^{n_1}), (1^{n_2}),  \cdots, (1^{n_r})) \text{ for some } (n_1,\cdots, n_r) \in \ZZ_{\geq 0}^r
		\\
		0 &\text{otherwise} 
	\end{cases}
$. 

\item 
If $\mu =(\emptyset, \cdots, \emptyset, (n))$, 
\\
$\b_{\la\mu} = 
	\begin{cases} 
		1 & \text{if } \la=((n_1), (n_2),  \cdots, (n_r)) \text{ for some } (n_1,\cdots, n_r) \in \ZZ_{\geq 0}^r
		\\
		0 &\text{otherwise} 
	\end{cases}
$. 

\item 
If $\mu =(\emptyset, \cdots, \emptyset, (1^n))$, 
\\
$\b_{\la\mu} = 
	\begin{cases} 
		1 & \text{if } \la=((1^{n_1}), (1^{n_2}),  \cdots, (1^{n_r})) \text{ for some } (n_1,\cdots, n_r) \in \ZZ_{\geq 0}^r
		\\
		0 &\text{otherwise} 
	\end{cases}
$. 
\end{enumerate}
\end{lem}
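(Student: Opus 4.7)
The plan is to invoke Theorem \ref{Theorem b_la_mu}(\roi), which identifies $\b_{\la\mu}$ with the number of $U_q(\Fg)$-singular tableaux in $\CT_0(\la,\mu)$, and in each of the four cases analyse singularity via the reading map $\Psi_t^\la$ of \ref{def crystal T(la) start}--\ref{def crystal T(la) end}. In all four cases the shape (for (\roi),(\roii)) or the weight (for (\roiii),(\roiv)) forces $\CT_0(\la,\mu)$ to contain at most one tableau, so the task reduces to deciding when that unique tableau is singular.

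For (\roi), $[\la]$ is the single row $\{(1,j,1):1\le j\le n\}$ and semistandardness amounts to the entries $T((1,j,1))=(a_j,c_j)$ being weakly increasing in $j$. The order $\succ$ reads this row right-to-left, so each block-$c$ portion of $\Psi_t^\la(T)$ is a weakly decreasing word in $\CB_{m_c}^{\otimes n_c}$. Such a word is $U_q(\Fgl_{m_c})$-highest weight iff every entry is $\bo{1}$; otherwise the leftmost factor $\bo{a}$ with $a\ge 2$ leaves an uncancelled $-$ in the $(a-1)$-signature. Hence all entries of $T$ have first coordinate $1$, forcing $\mu^{(k)}=(n_k)$, and conversely the unique weakly increasing filling of any such $\mu$ is singular. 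Case (\roii) is dual: $[\la]$ is a single column, semistandardness forces a strictly increasing column, $\succ$ reads top-to-bottom, and each block-$c$ part is a strictly increasing word $a_1<\cdots<a_{n_c}$; such a word is highest weight iff $a_j=j$ for every $j$, whence $\mu^{(c)}=(1^{n_c})$ and the unique filling exists.

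For (\roiii), weight $\mu=(\emptyset,\ldots,\emptyset,(n))$ forces every entry of $T$ to equal $(1,r)$, and the strict-column condition rules out any column of height $\ge 2$ in any $\la^{(k)}$; hence $\la=((n_1),\ldots,(n_r))$, and the unique all-$(1,r)$ filling is singular with block-$r$ reading $\bo{1}^{\otimes n}$. For (\roiv), weight $\mu=(\emptyset,\ldots,\emptyset,(1^n))$ forces the entries of $T$ to be $(1,r),(2,r),\ldots,(n,r)$ each occurring exactly once, so $\la$ is filled by $1,\ldots,n$ strictly increasing along rows and columns of every component; singularity amounts to the reading word read in $\succ$-order being the identity permutation $1,2,\ldots,n$ (the only Yamanouchi permutation of $\{1,\ldots,n\}$). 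Since $\succ$ processes components $r,r-1,\ldots,1$ and within each component reads columns right-to-left and top-to-bottom, a row of length $\ge 2$ in any $\la^{(k)}$ would force an earlier-read (hence smaller-valued) entry to sit to the right of a later-read one, contradicting the strict row-increase; hence every $\la^{(k)}$ is a single column and $\la=((1^{n_1}),\ldots,(1^{n_r}))$, with a unique filling. The only non-routine ingredient is the standard Yamanouchi characterization of highest weight monomials in $\CB_{m_k}^{\otimes N}$, namely that $\bo{a_1}\otimes\cdots\otimes\bo{a_N}$ is $U_q(\Fgl_{m_k})$-singular exactly when for every prefix $a_1\cdots a_l$ and every $i$ one has $\sharp\{j\le l:a_j=i\}\ge\sharp\{j\le l:a_j=i+1\}$; this feeds uniformly into all four case analyses.
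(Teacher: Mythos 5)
Your proof is correct and follows exactly the route the paper intends: the paper's own proof is the one-line remark that the lemma follows from Theorem \ref{Theorem b_la_mu} and Lemma \ref{Lemma T singular}, and your case analysis via the reading map $\Psi_t^\la$ and the prefix-partition (Yamanouchi) criterion is precisely the verification being left to the reader. The only slip is the opening claim that the weight forces $\CT_0(\la,\mu)$ to be at most a singleton in case (iv) --- it does not (e.g.\ $\la=((1),(1))$, $\mu=(\emptyset,(1^2))$ has two semistandard fillings) --- but your actual argument there counts singular tableaux directly via the identity-permutation reading word and does not rely on that claim, so nothing is affected.
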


\begin{proof}
One can easily check them by using Lemma \ref{Lemma T singular} and Theorem \ref{Theorem b_la_mu}. 
\end{proof}

\para 
For  $r$-partitions 
$\la$ and $\mu$, 
we denote by 
$\la \supset \mu$ 
if  
$[\la] \supset [\mu]$. 
For $r$-partitions $\la$ and $\mu$ 
such that $\la \supset \mu$, 
we define the skew Young diagram by 
$\la/\mu = [\la] \setminus [\mu]$. 
One can naturally identify 
$\la/\mu$ with 
$(\la^{(1)}/\mu^{(1)},\cdots, \la^{(r)}/ \mu^{(r)})$,  
where 
$\la^{(k)}/ \mu^{(k)}$ ($1 \leq k \leq r$) 
is the usual skew Young diagram for $\la^{(k)} \supset \mu^{(k)}$. 
For a skew Young diagram $\la/\mu$, 
we  define a semi-standard tableau of shape $\la/\mu$ 
in a similar manner as in the case where the shape  is an $r$-partition. 
We denote by 
$\CT_0(\la/\mu,\nu)$ 
the set of semi-standard tableaux of shape $\la/\mu$ with weight $\nu$. 
Put 
$\CT_0(\la/\mu)=\bigcup_{\nu \in \vL_{n',r}(\Bm)} \CT_0(\la/\mu, \nu)$, 
where 
$n'=|\la/\mu|$. 
Then, 
we can describe the $U_q(\Fg)$-crystal structure on $\CT_0(\la/\mu)$ 
in a similar way as in the paragraphs \ref{def crystal T(la) start} - \ref{def crystal T(la) end}. 
Namely, 
we define the equivalence relation \lq\lq $\sim$" 
on $\CT_0(\la/\mu)$ 
in a similar way as in \ref{Definition equiv. on CT_0(la)}, 
and 
define the map 
$\Psi_t^{\la/\mu} : \CT_0(\la/\mu)[t] \ra \CB_{m_1}^{\otimes n_1} \boxtimes \cdots \boxtimes \CB_{m_k}^{\otimes n_r}$ 
for an each equivalence class $ \CT_0(\la/\mu)[t]$ of $\CT_0(\la/\mu)$ as in \ref{Definition reading}. 
Then we can show that $\Psi_t^{\la/\mu}$ is injective, 
and that 
$\Psi_t^{\la/\mu}(\CT_0(\la/\mu)[t]) \cup \{ 0 \}$ 
is stable under the Kashiwara operators $\wt{e}_{(i,k)}$, $\wt{f}_{(i,k)}$  for $(i,k) \in \vG'_{\Fg}(\Bm)$ 
(cf. Proposition \ref{Proposition reading}). 
Put 
\[ 
\CT_{sing}(\la/\mu,\nu) = \{T \in \CT_0(\la/\mu, \nu) \,|\, T \text{ : $U_q(\Fg)$-singular} \}. 
\]

From the tensor product rule for $U_q(\Fg)$-crystals,  
we have the following criterion on whether 
$T \in \CT_0(\la/\mu)$ 
is $U_q(\Fg)$-singular or not 
(note that $\CT_0(\la/\mu)=\CT_0(\la)$ if $\mu = \emptyset$). 

\begin{lem}
\label{Lemma T singular}
For $T \in \CT_0(\la/\mu)[t]$, 
let 
$\Psi_t^{\la/\mu}(T) =\big( \bo{i_1^{(1)}} \otimes \cdots \otimes \bo{i_{n_1}^{(1)}} \big)
	\boxtimes \cdots \boxtimes  \big( \bo{i_1^{(r)}} \otimes \cdots \otimes \bo{i_{n_r}^{(r)}} \big) $. 
Then,   
$T$ is $U_q(\Fg) $-singular 
if and only if 
the weight of $\big( \bo{i_1^{(k)}} \otimes \cdots \otimes \bo{i_{j}^{(k)}} \big) \in \CB_{m_k}^{\otimes j}$ 
is a partition (i.e. dominant integral weight of $\Fgl_{m_k}$) 
for any $1 \leq j \leq n_k$ and any  $1 \leq k \leq r$. 
\end{lem}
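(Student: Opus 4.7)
The plan is to reduce to the classical type~$A$ highest-weight criterion, factor by factor. Since $U_q(\Fg) \cong U_q(\Fgl_{m_1}) \otimes \cdots \otimes U_q(\Fgl_{m_r})$ and the Kashiwara operators $\wt{e}_{(i,k)}$, $\wt{f}_{(i,k)}$ with $(i,k) \in \vG'_{\Fg}(\Bm)$ act only on the $k$-th boxed factor of $\CB_{m_1}^{\otimes n_1} \boxtimes \cdots \boxtimes \CB_{m_r}^{\otimes n_r}$, the element $\Psi_t^{\la/\mu}(T)$ is $U_q(\Fg)$-singular if and only if, for every $k = 1, \ldots, r$, the single factor $b^{(k)} := \bo{i_1^{(k)}} \otimes \cdots \otimes \bo{i_{n_k}^{(k)}} \in \CB_{m_k}^{\otimes n_k}$ is a $U_q(\Fgl_{m_k})$-highest weight vector. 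This reduces the lemma to proving, for each $k$ individually, the claim that $b^{(k)}$ is highest weight if and only if every prefix $\bo{i_1^{(k)}} \otimes \cdots \otimes \bo{i_j^{(k)}}$ has partition weight.

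For the reduced claim I would apply Kashiwara's tensor product (signature) rule to the vector crystal $\CB_{m_k}$, where $\ve_i(\bo{a}) = \d_{a,i+1}$ and $\vf_i(\bo{a}) = \d_{a,i}$ for each simple root $\a_i$ of $\Fgl_{m_k}$. Reading $b^{(k)}$ from left to right, letters $\bo{i}$ contribute $+1$ and letters $\bo{i+1}$ contribute $-1$ to a running count; by the signature rule, $\wt{e}_i b^{(k)} \neq 0$ exactly when this count becomes strictly negative at some prefix. Imposing $\wt{e}_i b^{(k)} = 0$ simultaneously for every $i = 1, \ldots, m_k - 1$ is therefore equivalent to the condition that the multiplicity of $\bo{i}$ is at least that of $\bo{i+1}$ in every prefix and for every $i$, which is precisely the requirement that each prefix have dominant, hence partition, weight. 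This is the classical Yamanouchi/lattice-word criterion, and the argument is the direct type~$A$ analogue of \cite[\S 3]{KN} (see also \cite[Ch.~7]{HK}).

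The main (and only mild) subtlety is that, unlike the single-partition case $r = 1$ of \cite{KN}, the reading $\Psi_t^{\la/\mu}$ aggregates cells with second coordinate $k$ from several components $\la^{(l)}/\mu^{(l)}$ (with $l \geq k$) into one word of $\CB_{m_k}^{\otimes n_k}$. However, the reduced claim is a statement about \emph{arbitrary} tensor words in $\CB_{m_k}^{\otimes n_k}$, so the geometric origin of the cells is irrelevant once the reading has been performed; moreover, since the Kashiwara operators separate by $k$, no cross-component interaction can affect the singularity analysis. The lemma then follows immediately by applying the type~$A$ characterization to each factor $b^{(k)}$.
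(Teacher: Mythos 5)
Your proof is correct and follows essentially the same route as the paper: first reduce to the statement that $T$ is $U_q(\Fg)$-singular if and only if each factor $\bo{i_1^{(k)}} \otimes \cdots \otimes \bo{i_{n_k}^{(k)}}$ is $U_q(\Fgl_{m_k})$-singular, then invoke the classical type $A$ prefix/lattice-word criterion for words in the vector crystal. The only difference is that the paper simply cites this last step from \cite[Lemma 6.1.1]{N} and \cite[Corollary 4.4.4]{HK}, whereas you re-derive it from the signature rule; the content is the same.
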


\begin{proof} 
It is clear that, 
for $T \in \CT_0(\la/\mu)[t]$ 
such that  
$\Psi_t^{\la/\mu}(T) =\big( \bo{i_1^{(1)}} \otimes \cdots \otimes \bo{i_{n_1}^{(1)}} \big)
	\boxtimes \cdots \boxtimes  \big( \bo{i_1^{(r)}} \otimes \cdots \otimes \bo{i_{n_r}^{(r)}} \big) $, 
$T$ 
 is $U_q(\Fg)$-singular 
if and only if 
$\big( \bo{i_1^{(k)}} \otimes \cdots \otimes \bo{i_{n_k}^{(k)}} \big) \in \CB_{m_k}^{\otimes n_k}$ 
is $U_q(\Fgl_{m_k})$-singular for any $k=1,\cdots,r$. 
Hence, 
the lemma follows from \cite[Lemma 6.1.1]{N} (see also \cite[Corollary 4.4.4]{HK}). 
\end{proof}

\para 
\label{Definition Bp}
Fix $\Bp=(r_1,\dots,r_g) \in \ZZ_{>0}^g$ such that $\sum_{k=1}^g r_k = r$. 
For $\la =(\la^{(1)},\cdots,\la^{(r)}) \in \vL_{n,r}^+(\Bm)$, 
put 
$\la^{[k]_{\Bp}} =(\la^{(p_k+1)},\cdots, \la^{(p_k+r_k)})$, 
where 
$p_k = \sum_{j=1}^{k-1} r_j$ with $p_1=0$.  
We define the map 
$\zeta^{\Bp} : \vL_{n,r}^+ (\Bm) \ra \ZZ_{\geq 0}^g$ 
by 
$\zeta^{\Bp}(\la) = (|\la^{[1]_{\Bp}}|, \cdots, |\la^{[g]_{\Bp}}|)$. 
Then, we have the following lemma.

\begin{lem}
For $\la,\mu \in \vL_{n,r}^+(\Bm)$ 
such that 
$\zeta^{\Bp}(\la) = \zeta^{\Bp}(\mu)$, 
we have 
\[
\b_{\la\mu} = \prod_{k=1}^g \b_{\la^{[k]_\Bp} \mu^{[k]_\Bp}}.
\]  
\end{lem}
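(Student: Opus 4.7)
The plan is to reduce the equality of multiplicities to a combinatorial bijection of semi-standard tableaux together with a compatibility of the $U_q(\Fg)$-crystal structure, and then apply Theorem \ref{Theorem b_la_mu}. Throughout, write $\Fg^{[k]_\Bp} = \Fgl_{m_{p_k+1}} \oplus \cdots \oplus \Fgl_{m_{p_k + r_k}}$, so that $U_q(\Fg) \cong U_q(\Fg^{[1]_\Bp}) \otimes \cdots \otimes U_q(\Fg^{[g]_\Bp})$.

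First I would establish the tableau decomposition. For any $T \in \CT_0(\la,\mu)$, semi-standardness forces an entry $T((i,j,l)) = (a,c)$ to satisfy $l \le c$; hence every entry with second coordinate in group $k$ (i.e.\ $c \in \{p_k+1,\dots,p_k+r_k\}$) must sit in a component $l \le p_k+r_k$, that is, in $\la^{[1]_\Bp} \cup \cdots \cup \la^{[k]_\Bp}$. By induction on $k$, using $|\la^{[k]_\Bp}| = |\mu^{[k]_\Bp}|$ (the hypothesis $\zeta^\Bp(\la) = \zeta^\Bp(\mu)$): the $k=1$ base case shows that all entries of $T$ restricted to $\la^{[1]_\Bp}$ have second coordinate in group $1$; and since the $|\mu^{[k]_\Bp}|$ entries with second coordinate in group $k$ cannot sit in $\la^{[1]_\Bp},\dots,\la^{[k-1]_\Bp}$ (already filled by earlier groups), they must fill $\la^{[k]_\Bp}$ exactly. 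Thus the restriction map $T \mapsto (T_1,\dots,T_g)$, where $T_k = T|_{[\la^{[k]_\Bp}]}$ (with second coordinates relabelled $p_k+j \mapsto j$), yields a bijection
\[
\CT_0(\la,\mu) \,\isom\, \prod_{k=1}^g \CT_0\bigl(\la^{[k]_\Bp}, \mu^{[k]_\Bp}\bigr).
\]

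Next I would verify that this bijection is compatible with the readings and the $U_q(\Fg)$-crystal structure defined in \ref{Definition reading}--\ref{def crystal T(la) end}. The order $\succeq$ on $[\la]$ is lexicographic in (component, column, row), so its restriction to positions in $\la^{[k]_\Bp}$ agrees with the order $\succeq$ on $[\la^{[k]_\Bp}]$ after relabelling components. Combining this with the fact (from Step 1) that entries with second coordinate in group $k$ lie precisely in $\la^{[k]_\Bp}$, the reading $\Psi_t^\la(T)$ factors, in its components indexed by group $k$, as $\Psi_{t_k}^{\la^{[k]_\Bp}}(T_k)$. Since the Kashiwara operators $\wt e_{(i,l)}, \wt f_{(i,l)}$ for $(i,l) \in \vG'_{\Fg^{[k]_\Bp}}$ act only on the tensor factors indexed by group $k$, the bijection $T \leftrightarrow (T_1,\dots,T_g)$ is an isomorphism of $U_q(\Fg)$-crystals, where the right-hand side carries the product crystal structure under $U_q(\Fg) \cong \bigotimes_k U_q(\Fg^{[k]_\Bp})$.

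Finally, by Lemma \ref{Lemma T singular} (applied componentwise on each group), $T$ is $U_q(\Fg)$-singular if and only if each $T_k$ is $U_q(\Fg^{[k]_\Bp})$-singular. Combining the bijection with this singular criterion,
\[
\sharp\bigl\{ T \in \CT_0(\la,\mu) \,\big|\, T \text{ $U_q(\Fg)$-singular}\bigr\}
= \prod_{k=1}^g \sharp\bigl\{ T_k \in \CT_0(\la^{[k]_\Bp},\mu^{[k]_\Bp}) \,\big|\, T_k \text{ $U_q(\Fg^{[k]_\Bp})$-singular}\bigr\}.
\]
Applying Theorem \ref{Theorem b_la_mu}(i) to each side (the right-hand side uses the analogous statement for the cyclotomic $q$-Schur algebras built from the sub-parameters $\Bm^{[k]_\Bp}$) yields $\b_{\la\mu} = \prod_{k=1}^g \b_{\la^{[k]_\Bp} \mu^{[k]_\Bp}}$, as desired. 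The main technical point will be Step 1, namely verifying cleanly that the counting identity $|\la^{[k]_\Bp}| = |\mu^{[k]_\Bp}|$ together with the column-strict/semi-standard constraints forces the block-diagonal form of $T$; once that is in place, the reading and Kashiwara-operator compatibility in Step 2 follow formally from the definitions.
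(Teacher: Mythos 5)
Your proposal is correct and follows essentially the same route as the paper: the same block-decomposition bijection $\CT_0(\la,\mu) \ra \prod_k \CT_0(\la^{[k]_\Bp},\mu^{[k]_\Bp})$, the same compatibility of this bijection with the reading and the singularity criterion of Lemma \ref{Lemma T singular}, and the same final appeal to Theorem \ref{Theorem b_la_mu}(i). The only cosmetic difference is that the paper reduces to the case $\Bp=(r_1,r_2)$ and inducts on $g$, whereas you treat general $g$ directly and spell out the counting argument (using $|\la^{[k]_\Bp}|=|\mu^{[k]_\Bp}|$ and condition (i) of semi-standardness) that the paper leaves implicit when asserting the bijection.
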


\begin{proof}
It is enough to show the case where $\Bp=(r_1,r_2)$ 
since we can obtain the claim for general cases by the induction on $g$. 
If $\zeta^{\BP}(\la)= \zeta^{\Bp}(\mu)$ for $\Bp=(r_1,r_2)$,  
then 
we have the bijection 
\begin{align}
\label{bijection T_0 T_0p} 
\CT_0(\la,\mu) \ra \CT_0(\la^{[1]_\Bp}, \mu^{[1]_\Bp}) \times \CT_0(\la^{[2]_\Bp}, \mu^{[2]_\Bp}) 
\text{ such that } T \mapsto (T^{[1]_\Bp}, T^{[2]_\Bp}), 
\end{align} 
where 
$T^{[1]_\Bp} ((i,j,k)) = T((i,j,k))$ for $(i,j,k) \in [\la^{[1]_\Bp}]$, 
and 
$T^{[2]_\Bp}((i,j,k)) = (a, c-r_1)$ 
if $T((i,j,r_1+k))=(a,c)$ for $(i,j,k) \in [\la^{[2]_\Bp}]$. 
In this case, 
by the definition of $\Psi_t^\la$ and Lemma \ref{Lemma T singular}, 
it is clear that 
$T \in \CT_0(\la,\mu)$ is $U_q(\Fg)$-singular 
if and only if 
$T^{[1]_\Bp}$ (resp. $T^{[2]_\Bp}$) is $U_q(\Fg^{[1]})$-singular 
(resp. $U_q(\Fg^{[2]})$-singular), 
where 
$\Fg^{[1]}=\Fgl_{m_1} \oplus \cdots \oplus \Fgl_{m_{r_1}}$ 
(resp. $\Fg^{[2]}=\Fgl_{m_{r_1+1}} \oplus \cdots \oplus \Fgl_{m_{r}}$). 
Then, 
by Theorem \ref{Theorem b_la_mu} (\roi) together with \eqref{bijection T_0 T_0p}, 
we have 
$\b_{\la\mu} = \b_{\la^{[1]_\Bp} \mu^{[1]_\Bp}} \b_{\la^{[2]_\Bp} \mu^{[2]_\Bp}}$. 
\end{proof}

\para 
For $\la,\mu \in \vL_{n,r}^+(\Bm)$,   
we define the following set of sequences of $r$-partitions: 
\begin{align*}
	\Theta (\la,\mu) := 
		\Big\{ \la = & \la_{\lan r \ran}  \supset \la_{\lan r-1 \ran} \supset \cdots 
		\supset \la_{\lan 1 \ran} \supset \la_{\lan 0 \ran}=(\emptyset, \cdots, \emptyset) 
		\\& \Bigm| 
		(\la_{\lan k \ran})^{(k+1)} = \emptyset, \,\,  \,\, 
		|\la_{\lan k \ran} / \la_{\lan k-1 \ran}| = |\mu^{(k)}|  \text{ for }  k=1,\cdots,r 
	\Big\}. 
\end{align*}
It is clear that, 
for $\la_{\lan r \ran} \supset \cdots \supset \la_{\lan 0 \ran} \in \Theta (\la, \mu)$, 
we have that 
$\la_{\lan k \ran }=(\la_{\lan k \ran}^{(1)},\cdots, \la_{\lan k \ran}^{(k)}, \emptyset, \cdots, \emptyset)$, 
and that 
$|\la_{\lan k \ran}|=\sum_{j=1}^k |\mu^{(k)}|$. 
Then, 
we can rewrite Theorem  \ref{Theorem b_la_mu} (\roi) 
as the following corollary.

\begin{cor}
\label{Corollary b_la_mu}
For $\la,\mu \in \vL_{n,r}^+(\Bm)$, 
we have 
\begin{align}
\label{b_la_mu rewrite}
	\b_{\la\mu} 
	= 
	\sum_{\la_{\lan r \ran} \supset \cdots \supset \la_{\lan 0 \ran} \in \Theta(\la,\mu)} 
	\prod_{k=1}^r 
	\sharp \CT_{sing} \big( \la_{\lan k \ran} / \la_{\lan k-1 \ran} , (\emptyset, \cdots, \emptyset, \mu^{(k)}, \emptyset, \cdots, \emptyset) \big).
\end{align}
In particular, 
if $\la=(\emptyset, \cdots, \emptyset, \la^{(t)}, \emptyset, \cdots, \emptyset)$ for some $t$, 
then 
we have 
\begin{align}
\label{b_la_mu rewrite special}
\b_{\la \mu} = \sum_{\la_{\lan r \ran} \supset \cdots \supset \la_{\lan 0 \ran} \in \Theta(\la,\mu)} 
\prod_{k=1}^r \LR_{\la^{(t)}_{\lan k-1 \ran}, \mu^{(k)}}^{\la_{\lan k \ran}^{(t)}}, 
\end{align}
where 
$\LR_{\la^{(t)}_{\lan k-1 \ran}, \mu^{(k)}}^{\la_{\lan k \ran}^{(t)}}$ 
is the Littlewood-Richardson coefficient for $\la^{(t)}_{\lan k-1 \ran}$, $\mu^{(k)}$ and $\la_{\lan k \ran}^{(t)}$ 
with 
$\LR_{\emptyset, \emptyset}^{\emptyset} =1$.
\end{cor}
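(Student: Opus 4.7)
The plan is to deduce the corollary directly from Theorem~\ref{Theorem b_la_mu}(\roi) via a bijection that decomposes a semi-standard tableau of shape $\la$ into a tower of skew-tableaux indexed by the second coordinate of its entries. First, given any $T\in \CT_0(\la,\mu)$, I define for each $k=0,1,\dots,r$ the subdiagram
\[
[\la_{\lan k\ran}] = \bigl\{\, x\in[\la] \bigm| T(x)=(a,c) \text{ for some } c\leq k\,\bigr\}.
\]
Using the semi-standard conditions (rows weakly increasing and columns strictly increasing in the order \lq\lq $\succeq$" on $\ZZ\times\ZZ$, whose primary key is the second coordinate) together with the rule $T((i,j,k))=(a,c)\Rightarrow k\leq c$, one checks that $[\la_{\lan k\ran}]$ is a subdiagram of $[\la]$ whose $(k+1)$-th through $r$-th components are empty, and that $\la_{\lan k\ran}\supset \la_{\lan k-1\ran}$. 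Since $T$ has weight $\mu$, the skew shape $\la_{\lan k\ran}/\la_{\lan k-1\ran}$ consists of exactly $|\mu^{(k)}|$ boxes, so the chain lies in $\Theta(\la,\mu)$.

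Next, I restrict $T$ to each skew shape $\la_{\lan k\ran}/\la_{\lan k-1\ran}$ to obtain a tableau $T_k$ whose entries are all of the form $(a,k)$. The map $T\mapsto (\la_{\lan r\ran}\supset\cdots\supset\la_{\lan 0\ran},\,T_1,\dots,T_r)$ is manifestly invertible: given the chain and the skew-tableaux $T_k\in\CT_0(\la_{\lan k\ran}/\la_{\lan k-1\ran},(\emptyset,\dots,\mu^{(k)},\dots,\emptyset))$, reassembling them recovers a unique semi-standard tableau of shape $\la$ with weight $\mu$. Moreover, by the very definition of $\Psi_t^\la$ the $k$-th tensor factor of $\Psi_t^\la(T)$ is exactly the reading $\Psi_{t_k}^{\la_{\lan k\ran}/\la_{\lan k-1\ran}}(T_k)$ of the $k$-th slice, because the boxes with $T(x)=(a,k)$ are precisely the boxes of $\la_{\lan k\ran}/\la_{\lan k-1\ran}$ and the order $\succeq$ on $[\la]$ restricts to the analogous order on the skew shape. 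Hence by Lemma~\ref{Lemma T singular}, $T$ is $U_q(\Fg)$-singular if and only if each $T_k$ is $U_q(\Fg)$-singular on its skew shape. Combining this with Theorem~\ref{Theorem b_la_mu}(\roi) yields \eqref{b_la_mu rewrite}.

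For \eqref{b_la_mu rewrite special}, specialize $\la=(\emptyset,\dots,\la^{(t)},\dots,\emptyset)$. Since every box of $[\la]$ sits in component $t$, semi-standardness forces $c\geq t$ for every entry $T(x)=(a,c)$; consequently $\la_{\lan k\ran}=(\emptyset,\dots,\emptyset)$ for $k<t$ (so the sum is empty unless $\mu^{(1)}=\cdots=\mu^{(t-1)}=\emptyset$), and in general $\la_{\lan k\ran}$ has a single possibly-nonempty component in slot $t$. Each skew shape $\la_{\lan k\ran}/\la_{\lan k-1\ran}$ then lives entirely in slot $t$ and carries a tableau whose entries are all $(a,k)$; by Lemma~\ref{Lemma T singular} the $U_q(\Fg)$-singular condition on this piece reduces to the usual $U_q(\Fgl_{m_k})$-singularity of a single-component semi-standard skew tableau, i.e.\ to the Yamanouchi/lattice-word condition, and these are counted by the Littlewood--Richardson coefficient $\LR_{\la_{\lan k-1\ran}^{(t)},\,\mu^{(k)}}^{\la_{\lan k\ran}^{(t)}}$.

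The main obstacle I anticipate is verifying cleanly that $\la_{\lan k\ran}$ is a genuine subdiagram (rather than an arbitrary subset of boxes): one must use both inequalities of semi-standardness and the fact that the order on $\ZZ\times\ZZ$ is lexicographic with the second coordinate dominant, so that the level sets $\{c\leq k\}$ are downward-closed along rows and columns. Once this is in hand, the factorization of $\Psi_t^\la$ along the slices and the tensor-product criterion of Lemma~\ref{Lemma T singular} make the remainder of the argument essentially bookkeeping.
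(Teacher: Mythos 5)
Your proposal is correct and follows essentially the same route as the paper: the decomposition of $T$ into the chain $[\la_{\lan k\ran}]=\{x\in[\la]\mid T(x)=(a,c),\ c\leq k\}$ is exactly the paper's identification of $\Theta(\la,\mu)$ with the equivalence classes of $\CT_0(\la,\mu)$ under \lq\lq$\sim$", and both arguments then combine Lemma~\ref{Lemma T singular} with Theorem~\ref{Theorem b_la_mu}(i), reducing the special case to the classical Littlewood--Richardson rule. Your write-up merely makes explicit the slice-by-slice bijection and the compatibility of the reading $\Psi_t^\la$ with restriction to the skew pieces, which the paper leaves implicit.
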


\begin{proof}
Note that we can identify the set $\Theta(\la,\mu)$ 
with the set of equivalence classes of $\CT_0(\la,\mu)$ with respect to the relation \lq\lq $\sim$" 
by 
corresponding 
$\la_{\lan r \ran } \supset \cdots \supset \la_{\lan 0 \ran} \in \Theta(\la,\mu)$ 
to the equivalence class of $\CT_0(\la,\mu)$ containing 
$T\in \CT_0(\la,\mu)$ 
such that 
$[\la_{\lan k \ran}] = \{(i,j,l) \in [\la] \,|\, T((i,j,l)) = (a,c) \text{ for some } 1 \leq a \leq m_c, 1 \leq c \leq k\}$ 
for any $k=1,\cdots, r$. 
Then 
Lemma \ref{Lemma T singular} and Theorem \ref{Theorem b_la_mu} (\roi) 
imply the equation \eqref{b_la_mu rewrite}. 

Assume that $\la=(\emptyset, \cdots, \emptyset, \la^{(t)}, \emptyset, \cdots, \emptyset)$ for some $t$. 
Then, 
for $\la_{\lan r \ran} \supset \cdots \supset \la_{\lan 0 \ran} \in \Theta(\la,\mu)$, 
we have 
\begin{align*}
\sharp \CT_{sing} \big( \la_{\lan k \ran} / \la_{\lan k-1 \ran} , (\emptyset, \cdots, \emptyset, \mu^{(k)}, \emptyset, \cdots, \emptyset) \big)
&= 
\sharp \CT_{sing} \big( \la_{\lan k \ran}^{(t)}/ \la_{\lan k-1 \ran}^{(t)} , \mu^{(k)} \big) 
\\
&= 
\LR^{\la_{\lan k \ran}^{(t)}}_{\la_{\lan k-1 \ran}^{(t)}, \mu^{(k)}}, 
\end{align*}
where 
the last equation follows from 
the original Littlewood-Richardson rule (\cite[Ch. I (9.2)]{Mac}).  
(Note that, 
for partitions $\la,\mu$  (not multi-partitions)  such that $\la \supset \mu$, 
the $U_q(\Fgl_m)$-crystal structure on $\CT_0(\la/\mu)$ 
does not depend on the choice of admissible reading 
(see \cite[Theorem 7.3.6]{HK}). 
Then a similar statement as in Lemma \ref {Lemma T singular} for $\CT_0(\la/\mu)$ 
under the Middle-Eastern reading 
coincides with the Littlewood-Richardson rule.)  
Then 
\eqref{b_la_mu rewrite} 
implies 
\eqref{b_la_mu rewrite special}.
\end{proof}

\remark 
\label{Remark LR rule}
In the case where 
$r=2$ and $\la=(\la^{(1)}, \emptyset)$, 
by \eqref{b_la_mu rewrite special}, 
we have 
\begin{align*}
\b_{\la\mu} 
&= \sum_{\la^{(1)}_{\lan 1 \ran}} 
	\LR^{\la^{(1)}}_{\la^{(1)}_{\lan 1 \ran} , \mu^{(1)}} \LR^{\la^{(1)}_{\lan 1 \ran}}_{\emptyset , \mu^{(2)}} 
\\
&= \LR^{\la^{(1)}}_{\mu^{(2)} , \mu^{(1)}}, 
\end{align*}
where the last equation follows from 
$\LR^{\la^{(1)}_{\lan 1 \ran}}_{\emptyset , \mu^{(2)}}  = \d_{\la^{(1)}_{\lan 1 \ran}, \mu^{(2)}}$. 
Thus, 
the Littlewood-Richardson coefficient $\LR^\la_{\mu,\nu}$ for partitions $\la,\mu,\nu$ 
is obtained as the number $\b_{(\la,\emptyset) (\mu,\nu)}$. 
Moreover, 
thanks to Lemma \ref{Lemma T singular} together with the reading $\Psi^{\la/\mu}_t$, 
we can regard \eqref{b_la_mu rewrite} as a generalization of the Littlewood-Richardson rule. 
We also remark the following classical fact: 
\begin{align} 
\label{Littlewood-Richardson in GL}
[\Res^{\mathbf{GL}_n}_{\mathbf{GL}_m \times \mathbf{GL}_{n-m}} V_\la : V_\mu \boxtimes V_\nu]_{\mathbf{GL}_m \times \mathbf{GL}_n} 
=\LR^\la_{\mu,\nu}, 
\end{align}
where 
$\mathbf{GL}_n$ (resp. $\mathbf{GL}_m$, $\mathbf{GL}_{n-m}$) 
is the general linear group of rank $n$ (resp. $m$, $n-m$), 
and 
$V_\la$ (resp. $V_\mu$, $V_\nu$) 
is the simple $\mathbf{GL}_n$-module (resp. simple $\mathbf{GL}_m$-module, simple $\mathbf{GL}_{n-m}$-module) 
corresponding to a partition $\la$ (resp. $\mu$, $\nu$). 
Comparing \eqref{def b_la_mu} with \eqref{Littlewood-Richardson in GL}, 
we may regard the number $\b_{\la\mu}$ 
as a generalization of Littlewood-Richardson coefficients.



\section{Characters of the Weyl modules and symmetric functions} 
\label{section character} 
\para 
For $\Bm=(m_1,\cdots, m_r) \in \ZZ_{>0}^r$, 
we denote by  
$\Xi_{\Bm} = \bigotimes_{k=1}^r \ZZ [ x_1^{(k)}, \cdots, x_{m_k}^{(k)}]^{\FS_{m_k}}$ 
the ring of symmetric polynomials 
(with respect to $\FS_{m_1}  \times \cdots \times \FS_{m_r}$) 
with variables $x_{i}^{(k)}$ ($1 \leq i \leq m_k$, $1 \leq k \leq r$). 
We denote by 
$x^{(k)}=(x_1^{(k)}, x_2^{(k)}, \cdots, x_{m_k}^{(k)})$ 
the set of $m_k$ independent variables for $k=1,\cdots,r$,  
and 
denote by $\bx=(x^{(1)}, \cdots, x^{(r)})$ 
the whole variables. 
Let 
$\Xi_{\Bm}^n$ 
be the subset of 
$\Xi_{\Bm}$ 
which consists of homogeneous symmetric polynomials of degree $n$. 
We also consider the inverse limit 
$\dis \Xi^n = \lim_{\stackrel{\longleftarrow}{\Bm}} \Xi^n_{\Bm}$
with respect to $\Bm$. 
Put 
$\Xi = \bigoplus_{n \geq 0} \Xi^n$. 
Then 
$\Xi$ becomes the ring of symmetric functions 
$\Xi=\bigotimes_{k=1}^r \ZZ[ X^{(k)}]^{\FS(X^{(k)})}$, 
where 
$X^{(k)}=(X_1^{(k)}, X_2^{(k)}, \cdots)$ 
is the set of (infinite) variables. 
We denote by 
$\bX=(X^{(1)},\cdots, X^{(r)})$ 
the whole variables of $\Xi$.

For $\la=(\la^{(1)},\cdots,\la^{(r)}) \in \vL_{n,r}^+(\Bm)$, 
put 
$S_\la (\bx) = \prod_{k=1}^r S_{\la^{(k)}}(x^{(k)})$ 
(resp. $S_\la(\bX) = \prod_{k=1}^r S_{\la^{(k)}}(X^{(k)})$),  
where 
$S_{\la^{(k)}}(x^{(k)})$ (resp. $S_{\la^{(k)}}(X^{(k)})$) 
is the  Schur polynomial (resp. Schur function) associated to $\la^{(k)}$ ($1 \leq k \leq r$) 
with variables $x^{(k)}$ (resp. $X^{(k)}$). 
Then 
$\{S_\la(\bx) \,|\, \la \in \vL_{n,r}^+(\Bm)\}$ 
(resp. $\{S_\la(\bX) \,|\, \la \in \vL_{n,r}^+\}$) 
gives a $\ZZ$-basis of $\Xi_{\Bm}^n$ (resp. $\ZZ$-basis of $\Xi^n$).

\para 
For an $\Sc_{n,r}(\vL_{n,r}(\Bm))$-module $M$, 
we define the character of $M$ by 
\[ \ch M = \sum_{\mu \in \vL_{n,r}(\Bm)} \dim M_\mu \cdot x^\mu \, \in \ZZ[\bx], \]
where 
$x^\mu = \prod_{k=1}^r (x_1^{(k)})^{\mu_1^{(k)}} (x_2^{(k)})^{\mu_2^{(k)}} \cdots (x_{m_k}^{(k)})^{\mu_{m_k}^{(k)}} $. 
Then the character of the Weyl module $W(\la)$ for $\Sc_{n,r}(\vL_{n,r}(\Bm))$ 
has the following properties. 
 
\begin{thm}\
\label{Theorem character of Weyl}
\begin{enumerate}
\item 
For $\la \in \vL_{n,r}^+(\Bm)$, we have 
\[
\ch W(\la) = \sum_{\mu \in \vL_{n,r}(\Bm)} 
	\left( \sum_{\nu \in \vL_{n,r}^+(\Bm)} \b_{\la\nu}\prod_{k=1}^r  K_{\nu^{(k)} \mu^{(k)}} \right) \cdot x^\mu, 
\]
where $ K_{\nu^{(k)} \mu^{(k)}}$ is the Kostka number corresponding to partitions $\nu^{(k)}$ and $\mu^{(k)}$.  
\item 
Put $\wt{S}_\la (\bx) =\ch W(\la)$ for $\la \in \vL_{n,r}^+(\Bm)$. 
Then, we have 
\[ \wt{S}_\la (\bx) = \sum_{\mu \in \vL_{n,r}^+(\Bm)} \b_{\la\mu} S_\mu(\bx). \]
\item 
$\{\wt{S}_{\la} (\bx) \,|\, \la \in \vL_{n,r}^+(\Bm)\}$ 
gives a $\ZZ$-basis of  $\Xi_{\Bm}^n$. 
\end{enumerate}
\end{thm}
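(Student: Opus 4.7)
\textbf{Proof plan for Theorem \ref{Theorem character of Weyl}.} The plan is to deduce all three statements from the decomposition \eqref{def b_la_mu} together with classical facts about Schur polynomials. The crucial observation I would invoke at the outset is that, since the homomorphism $\Phi_{\Fg}:U_q(\Fg)\to\Sc_{n,r}$ sends $K_{(i,k)}$ to $K_{(i,k)}$, the $\mu$-weight space of any $\Sc_{n,r}$-module $M$ in the sense of \ref{Definition Weyl} coincides with the $\mu$-weight space of $M$ viewed as a $U_q(\Fg)$-module. Consequently the character of $W(\la)$ can be computed from its $U_q(\Fg)$-module structure.

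For (\roi), I would take characters on both sides of the isomorphism \eqref{def b_la_mu}. The character of $W(\nu^{(1)})\boxtimes\cdots\boxtimes W(\nu^{(r)})$ is $\prod_{k=1}^r \ch W(\nu^{(k)})$, where $W(\nu^{(k)})$ is the usual Weyl module of $U_q(\Fgl_{m_k})$. Since $\dim W(\nu^{(k)})_{\mu^{(k)}}=K_{\nu^{(k)}\mu^{(k)}}$ by the classical theory, collecting the coefficient of $x^\mu=\prod_k (x^{(k)})^{\mu^{(k)}}$ gives exactly the asserted formula.

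For (\roii), I would use the same step but repackage it via Schur polynomials: for a single $\Fgl_{m_k}$-Weyl module, $\ch W(\nu^{(k)}) = S_{\nu^{(k)}}(x^{(k)})$. Multiplying over $k$ and summing over $\nu$ with multiplicity $\b_{\la\nu}$ yields
\[
\wt{S}_\la(\bx)=\ch W(\la)=\sum_{\nu\in\vL_{n,r}^+(\Bm)} \b_{\la\nu}\prod_{k=1}^r S_{\nu^{(k)}}(x^{(k)})=\sum_{\nu\in\vL_{n,r}^+(\Bm)}\b_{\la\nu}\,S_\nu(\bx).
\]
Equivalently, (\roii) also follows from (\roi) by the classical monomial expansion $S_{\nu^{(k)}}(x^{(k)})=\sum_{\mu^{(k)}} K_{\nu^{(k)}\mu^{(k)}}x^{\mu^{(k)}}$.

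For (\roiii), it suffices to observe that the transition matrix $(\b_{\la\mu})_{\la,\mu\in\vL_{n,r}^+(\Bm)}$ between $\{\wt S_\la(\bx)\}$ and the known $\ZZ$-basis $\{S_\mu(\bx)\}$ of $\Xi_{\Bm}^n$ is unitriangular with respect to the dominance order: Lemma \ref{Lemma properties b_la_mu} (\roi)--(\roii) gives $\b_{\la\la}=1$ and $\b_{\la\mu}\neq 0\Rightarrow \la\geq\mu$. Ordering $\vL_{n,r}^+(\Bm)$ by any linear refinement of $\geq$, the matrix is upper unitriangular over $\ZZ$, hence invertible over $\ZZ$, and therefore $\{\wt S_\la(\bx)\mid \la\in\vL_{n,r}^+(\Bm)\}$ is again a $\ZZ$-basis. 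I do not anticipate a serious obstacle: the only substantive input is the $U_q(\Fg)$-decomposition (\ref{def b_la_mu}), which is already established, and the remaining steps are straightforward book-keeping with characters and a triangularity argument.
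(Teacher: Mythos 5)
Your proposal is correct and follows essentially the same route as the paper: both deduce (i) and (ii) by taking characters of the decomposition \eqref{def b_la_mu} (the paper routes (i) through the identity \eqref{number semi-standard}, which is derived from \eqref{def b_la_mu} by exactly your Kostka-number computation) and obtain (iii) from the unitriangularity of $(\b_{\la\mu})$ in Lemma \ref{Lemma properties b_la_mu}. The only detail you omit is the reduction, via Remark \ref{Remark weight cut}, to the case $m_k \geq n$ so that \eqref{def b_la_mu} applies; this is a one-line technicality.
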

\begin{proof}
Note Remark \ref {Remark weight cut}, 
we may assume that $m_k \geq n$ for any $k=1,\cdots, r$ 
by restricting the weights  if necessary for a general case.

Since there exists a bijection between a basis of $W(\la)_\mu$ and $\CT_0(\la,\mu)$, 
(\roi)  follows from \eqref{number semi-standard}.

It is known that 
\begin{align}
\label{Schur function}
S_\la(\bx) 
=
\sum_{\mu \in \vL_{n,r}(\Bm)} 
\dim \left( W(\la^{(1)}) \boxtimes \cdots \boxtimes W(\la^{(r)}) \right)_\mu \cdot x^\mu. 
\end{align}
Note that 
the $\mu$-weight space of an $\Sc_{n,r}$-module 
coincides with 
the $\mu$-weight space as the $U_q(\Fg)$-module via the homomorphism 
$\Phi_\Fg : U_q(\Fg) \ra \Sc_{n,r}$. 
Thus, 
the decomposition \eqref{def b_la_mu} together with \eqref{Schur function} 
implies (\roii). 

(\roiii) 
follows from (\roii) 
since the number $\b_{\la\mu}$ ($\la,\mu \in \vL_{n,r}^+(\Bm)$) 
has the uni-triangular property by Lemma \ref{Lemma properties b_la_mu}. 
\end{proof}

\para 
For $\la \in \vL_{n,r}^+(\Bm)$, 
let 
$\wt{S}_\la(\bX) \in \Xi^n$ 
be the image of 
$\wt{S}_\la(\bx)$ 
in the inverse limit. 
We denote by  
$\vL_{\geq 0,r}^+ = \bigcup_{n \geq 0} \vL_{n,r}^+$ 
the set of $r$-partitions. 
Then, 
Theorem \ref{Theorem character of Weyl} (\roiii) 
implies that 
$\{\wt{S}_\la(\bX) \,|\, \la \in \vL_{\geq 0 ,r}^+\}$ 
gives a $\ZZ$-basis of $\Xi$. 
For a certain extreme $r$-partition $\la$,  
$\wt{S}_\la(\bX)$ 
coincides with the Schur function as follows.

\begin{prop}
\label{Proposition wt{S} Schur function}
For $\la =(\la^{(1)},\cdots, \la^{(r)}) \in \vL_{n , r}^+$, 
if  $\la^{(l)} = \emptyset$ unless $l=t$ for some $t$, 
we have 
\[ 
	\wt{S}_\la (\bX) = S_{\la^{(t)}}(X^{(t)} \cup X^{(t+1)} \cup \cdots \cup X^{(r)}), 
\]
where 
 $S_{\la^{(t)}}(X^{(t)} \cup \cdots \cup X^{(r)}) \in \ZZ[X^{(t)} \cup \cdots \cup X^{(r)}]^{\FS(X^{(t)}\cup \cdots \cup X^{(r)})}$ 
 is the Schur function corresponding to the partition $\la^{(t)}$. 
\end{prop}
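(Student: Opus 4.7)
The plan is to combine the explicit formula \eqref{b_la_mu rewrite special} for $\b_{\la\mu}$ with the iterated Littlewood--Richardson rule for a Schur function evaluated on a disjoint union of variable sets. By Theorem \ref{Theorem character of Weyl} (\roii), $\wt{S}_\la(\bX) = \sum_{\mu} \b_{\la\mu}\prod_{k=1}^r S_{\mu^{(k)}}(X^{(k)})$, so it suffices to match this expansion with the iterated LR expansion of $S_{\la^{(t)}}(X^{(t)} \cup \cdots \cup X^{(r)})$ in the variable groups $X^{(t)},\ldots,X^{(r)}$.

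First I would analyze $\Theta(\la,\mu)$ under the hypothesis $\la^{(l)} = \emptyset$ for $l \neq t$. Since every $\la_{\lan k \ran}$ in the chain is contained in $\la$, its $l$-th component vanishes for all $l \neq t$, so $\la_{\lan k \ran}$ is determined by the single partition $\la_{\lan k \ran}^{(t)}$. The constraint $\la_{\lan k \ran}^{(k+1)} = \emptyset$ taken at $k = t-1$ forces $\la_{\lan t-1 \ran}^{(t)} = \emptyset$, and by monotonicity $\la_{\lan k \ran}^{(t)} = \emptyset$ for all $k \leq t-1$. The size conditions $|\la_{\lan k \ran}/\la_{\lan k-1 \ran}| = |\mu^{(k)}|$ therefore force $\mu^{(k)} = \emptyset$ for $k < t$, and in that case \eqref{b_la_mu rewrite special} reduces, after writing $\nu_j := \la_{\lan t+j-1 \ran}^{(t)}$, to
\begin{equation*}
\b_{\la\mu} = \sum_{\emptyset = \nu_0 \subset \nu_1 \subset \cdots \subset \nu_{r-t+1} = \la^{(t)}} \prod_{j=1}^{r-t+1} \LR^{\nu_j}_{\nu_{j-1},\, \mu^{(t+j-1)}},
\end{equation*}
the factors coming from $k < t$ each being $\LR^{\emptyset}_{\emptyset,\emptyset} = 1$. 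For any $\mu$ with some nonempty $\mu^{(k)}$, $k < t$, we have $\b_{\la\mu} = 0$.

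Second, I would apply the classical identity $S_\nu(Y \cup Z) = \sum_{\a,\b} \LR^\nu_{\a,\b} S_\a(Y) S_\b(Z)$ repeatedly to the union $X^{(t)} \cup X^{(t+1)} \cup \cdots \cup X^{(r)}$, obtaining
\begin{equation*}
S_{\la^{(t)}}(X^{(t)} \cup \cdots \cup X^{(r)}) = \sum_{\mu^{(t)},\ldots,\mu^{(r)}} \Bigg( \sum_{\emptyset = \nu_0 \subset \cdots \subset \nu_{r-t+1} = \la^{(t)}} \prod_{j=1}^{r-t+1} \LR^{\nu_j}_{\nu_{j-1},\, \mu^{(t+j-1)}} \Bigg) \prod_{k=t}^r S_{\mu^{(k)}}(X^{(k)}).
\end{equation*}
A term-by-term comparison with the expansion $\wt{S}_\la(\bX) = \sum_{\mu} \b_{\la\mu}\prod_{k=1}^r S_{\mu^{(k)}}(X^{(k)})$ then yields the claim: the vanishing of $\b_{\la\mu}$ for $\mu$ with a nonempty $\mu^{(k)}$ ($k < t$) ensures the variables $X^{(1)},\ldots,X^{(t-1)}$ do not appear, and on the remaining terms the coefficients agree. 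The main delicate point is the bookkeeping of the chain indices and the verification that the vacuous stages $k < t$ contribute trivially, but no conceptual obstacle arises once \eqref{b_la_mu rewrite special} is in hand.
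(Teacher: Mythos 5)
Your proposal is correct and follows essentially the same route as the paper: both start from Theorem \ref{Theorem character of Weyl} (\roii) and the special formula \eqref{b_la_mu rewrite special}, reduce the chains in $\Theta(\la,\mu)$ to chains of single partitions $\emptyset=\la^{(t)}_{\lan t-1\ran}\subset\cdots\subset\la^{(t)}_{\lan r\ran}=\la^{(t)}$ (forcing $\mu^{(k)}=\emptyset$ for $k<t$), and then invoke the classical expansion of a Schur function on a union of alphabets (the paper cites Macdonald I.(5.3) and (5.11), which together are exactly your iterated identity $S_\nu(Y\cup Z)=\sum_{\a,\b}\LR^\nu_{\a,\b}S_\a(Y)S_\b(Z)$). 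The only difference is presentational: the paper transforms $\wt{S}_\la(\bX)$ forward through skew Schur functions, while you expand both sides in the basis $\prod_k S_{\mu^{(k)}}(X^{(k)})$ and compare coefficients.
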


\begin{proof}
Assume that  $\la^{(l)} = \emptyset$ unless $l=t$, 
then 
we see that 
the variable $X_i^{(l)}$ ($i \geq 1$, $1 \leq l \leq t-1$) 
does not appear in $\wt{S}_\la(\bX)$ 
since 
$\la \geq \mu$ if 
$\dim W (\la)_\mu \not=0$. 
Note that 
we can regard 
$\ZZ[X^{(t)} \cup \cdots \cup X^{(r)}]^{\FS(X^{(t)}\cup \cdots \cup X^{(r)})}$ 
as 
a subring of 
$\Xi = \bigotimes_{k=1}^r \ZZ[X^{(k)}]^{\FS(X^{(k)})}$ in the natural way.   
By Theorem \ref{Theorem character of Weyl} (\roii) with 
\eqref{b_la_mu rewrite special}, 
we have 
\begin{align*}
\wt{S}_\la(\bX) 
&= 
	\sum_{\mu \in \vL_{n,r}^+} \left( \sum_{\la_{\lan r \ran } \supset \cdots \supset \la_{\lan 0  \ran} \in \Theta (\la,\mu)} 
		\prod_{k=1}^r \LR_{\la_{\lan k-1 \ran}^{(t)}, \mu^{(k)}}^{\la_{\lan k \ran}^{(t)}} \right) 
	S_\mu(\bX)
\\
&= 
	\sum_{\mu \in \vL_{n,r}^+}  
	\sum_{(\ast 1)} 
		\left( \prod_{k=t}^r \LR_{\la_{\lan k-1 \ran}^{(t)}, \mu^{(k)}}^{\la_{\lan k \ran}^{(t)}}  S_{\mu^{(k)}}(X^{(k)}) \right) 
\\
&= 
	\sum_{(\ast 2)}  
	\sum_{\mu \in \vL_{n,r}^+} 
		\left( \prod_{k=t}^r \LR_{\la_{\lan k-1 \ran}^{(t)}, \mu^{(k)}}^{\la_{\lan k \ran}^{(t)}}  S_{\mu^{(k)}}(X^{(k)}) \right) 
\\
&= 
	\sum_{(\ast 2)}  
		\prod_{k=t}^r 
			\left( \sum_{(\ast 3)} \LR_{\la_{\lan k-1 \ran}^{(t)}, \mu^{(k)}}^{\la_{\lan k \ran}^{(t)}}  S_{\mu^{(k)}}(X^{(k)}) \right) 
\\
&= 
	\sum_{(\ast 2)}  
		\prod_{k=t}^r 
			S_{\la^{(t)}_{\lan k \ran} / \la^{(t)}_{\lan k-1 \ran}}(X^{(k)}) 
	\qquad \big( \text{because of \cite[Ch. 1. (5.3)]{Mac}}\big)
\\
&= 
	S_{\la^{(t)}}(X^{(t)} \cup X^{(t+1)} \cup \cdots \cup X^{(r)}) 
	\qquad \big( \text{because of \cite[Ch. 1. (5.11)]{Mac}}\big), 
\end{align*}
where 
the summations 
$(\ast 1)$-$(\ast 3)$ 
run the following sets respectively: 
\begin{align*}
&(\ast 1) :  \big\{ \la^{(t)} = \la^{(t)}_{\lan r \ran} \supset \cdots \supset \la^{(t)}_{\lan t \ran} \supset \la^{(t)}_{\lan t-1 \ran} =\emptyset 
				\bigm|  |\la^{(t)}_{\lan k \ran}/ \la^{(t)}_{\lan k-1 \ran}|=|\mu^{(k)}| \text{ for } k=t, \cdots,r \big\},
\\
&(\ast 2) : \big\{ \la^{(t)} = \la^{(t)}_{\lan r \ran} \supset \cdots \supset \la^{(t)}_{\lan t \ran} \supset \la^{(t)}_{\lan t-1 \ran} =\emptyset \big\}, 
\\
&(\ast 3) : \big\{ \mu^{(k)} \text{ : partition}\big\}. 
\end{align*}
(In the above equations, note that $ \LR_{\la_{\lan k-1 \ran}^{(t)}, \mu^{(k)}}^{\la_{\lan k \ran}^{(t)}} =0$ unless 
$|\la_{\lan k \ran}^{(t)}|=|\la_{\lan k-1 \ran}^{(t)}|+| \mu^{(k)}|$.) 
\end{proof}
\para 
Thanks to the above lemma,  
the symmetric function 
$\wt{S}_\la(\bX)$ seems a generalization of  the Schur function. 

For 
$\la,\mu,\nu \in \vL_{\geq 0, r}^+$, 
we define the integer $c_{\la\mu}^{\nu} \in \ZZ$ by 
\[ \wt{S}_\la(\bX) \wt{S}_\mu(\bX) = \sum_{\nu \in \vL_{\geq 0, r}^+} c_{\la\mu}^\nu \wt{S}_\nu (\bX).\] 
Then we can compute the number $c_{\la\mu}^\nu$ as follows.

\begin{prop}
\label{Proposition properties c_la_mu^nu}
For $\la,\mu,\nu \in \vL_{\geq 0, r}^+$, 
we have the following. 
\begin{enumerate}
\item 
$c_{\la\mu}^\nu =0$ unless $|\nu|=|\la|+|\mu|$. 

\item 
Put  
$\big( \b'_{\t\nu} \big)_{\t, \nu \in \vL_{n,r}^+} = \big( \b_{\t \nu}\big)^{-1}_{\t, \nu \in \vL_{n,r}^+} $ $(n=|\nu|)$. 
Then we have 
\[ 
	c_{\la \mu}^\nu = \sum_{\xi, \eta,  \t \in \vL_{\geq 0,r}} \b_{\la\xi} \b_{\mu \eta} \b'_{\t\nu} \prod_{k=1}^r \LR_{\xi^{(k)} \eta^{(k)}}^{\t^{(k)}}.
\]

\item 
If 
$\zeta (\nu) = \zeta (\la+\mu)$, 
we have 
\[ 
	c_{\la\mu}^\nu = \prod_{k=1}^r \LR_{\la^{(k)} \mu^{(k)} }^{\nu^{(k)}}. 
\]

\item 
If $\la^{(l)}=\emptyset$ and $\mu^{(l)}=\emptyset $ unless $l=t$ for some $t$, 
we have 
\[ 
	c_{\la\mu}^\nu = 
		\begin{cases} 
			\LR_{\la^{(t)} \mu^{(t)} }^{\nu^{(t)}} & \text{ if $\nu^{(l)}=\emptyset $ unless $l=t$}, 
		\\
		0 & \text{otherwise}. 
		\end{cases}
\]
\end{enumerate}
\end{prop}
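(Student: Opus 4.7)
The plan is to expand $\wt{S}_\la(\bX)\wt{S}_\mu(\bX)$ via Theorem \ref{Theorem character of Weyl} (\roii), apply the classical Littlewood--Richardson rule in each $X^{(k)}$-block, and then re-expand in the $\wt{S}$-basis using the inverse matrix $\b'$. Part (\roi) is a homogeneity count: Theorem \ref{Theorem character of Weyl} (\roii) combined with Lemma \ref{Lemma properties b_la_mu} (\roii) shows $\wt{S}_\la(\bX)\in\Xi^{|\la|}$, so $\wt{S}_\la\wt{S}_\mu\in\Xi^{|\la|+|\mu|}$, and only $\wt{S}_\nu$ with $|\nu|=|\la|+|\mu|$ can appear.

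For (\roii), I would substitute $\wt{S}_\la=\sum_\xi\b_{\la\xi}S_\xi$ and $\wt{S}_\mu=\sum_\eta\b_{\mu\eta}S_\eta$ into the product and apply the classical formula $S_{\xi^{(k)}}(X^{(k)})S_{\eta^{(k)}}(X^{(k)})=\sum_{\t^{(k)}}\LR_{\xi^{(k)}\eta^{(k)}}^{\t^{(k)}}S_{\t^{(k)}}(X^{(k)})$ factor by factor; inverting via $S_\t(\bX)=\sum_\nu\b'_{\t\nu}\wt{S}_\nu(\bX)$ and reading off the coefficient of $\wt{S}_\nu$ yields the stated sum.

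For (\roiii), the key step is to establish the analogue of Lemma \ref{Lemma properties b_la_mu} (\roi), (\roiii) for the inverse, namely $\b'_{\nu\nu}=1$ and $\b'_{\t\nu}=0$ whenever $\zeta(\t)=\zeta(\nu)$ and $\t\neq\nu$. I would prove this from the finite Neumann expansion $\b'=(I+N)^{-1}=\sum_{k\geq 0}(-1)^kN^k$ with $N_{\la\mu}=\b_{\la\mu}-\d_{\la\mu}$: by Lemma \ref{Lemma properties b_la_mu} (\roii) and (\roiii), $N_{\la\mu}\neq 0$ forces both $\la>\mu$ in dominance and $\zeta(\la)\neq\zeta(\mu)$, and since $\la\geq\mu$ implies $\zeta(\la)\succeq\zeta(\mu)$, this gives $\zeta(\la)\succ\zeta(\mu)$ strictly. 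Any $(N^k)_{\t\nu}\neq 0$ with $k\geq 1$ therefore requires a strictly $\succ$-decreasing chain of $\zeta$-values, incompatible with $\zeta(\t)=\zeta(\nu)$. Granting this, the terms surviving in the sum from (\roii) must satisfy $\zeta(\la)\succeq\zeta(\xi)$, $\zeta(\mu)\succeq\zeta(\eta)$ (from $\b_{\la\xi},\b_{\mu\eta}\neq 0$), $\zeta(\t)=\zeta(\xi)+\zeta(\eta)$ (from the nonzero $\LR$ coefficients), and $\zeta(\t)\succeq\zeta(\nu)$ (from $\b'_{\t\nu}\neq 0$); under the hypothesis $\zeta(\nu)=\zeta(\la)+\zeta(\mu)$ these all collapse to equalities. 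Then Lemma \ref{Lemma properties b_la_mu} (\roi), (\roiii) together with the established property of $\b'$ force $(\xi,\eta,\t)=(\la,\mu,\nu)$, leaving only the term $\prod_k\LR_{\la^{(k)}\mu^{(k)}}^{\nu^{(k)}}$.

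For (\roiv), when both $\la$ and $\mu$ are concentrated at position $t$, Proposition \ref{Proposition wt{S} Schur function} gives $\wt{S}_\la(\bX)=S_{\la^{(t)}}(Y)$ and $\wt{S}_\mu(\bX)=S_{\mu^{(t)}}(Y)$ where $Y=X^{(t)}\cup\cdots\cup X^{(r)}$. Their product expands as $\sum_\pi\LR_{\la^{(t)}\mu^{(t)}}^\pi S_\pi(Y)$ by the ordinary Littlewood--Richardson rule, and each $S_\pi(Y)$ is in turn $\wt{S}_{(\emptyset,\ldots,\emptyset,\pi,\emptyset,\ldots,\emptyset)}(\bX)$ again by Proposition \ref{Proposition wt{S} Schur function}; comparing coefficients of $\wt{S}_\nu$ yields (\roiv). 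The main obstacle I anticipate is the Neumann-series step isolating the $\zeta$-invariance of $\b'$; once that technical lemma is in place, the remainder is direct manipulation of characters within the framework already developed.
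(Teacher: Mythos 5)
Your proposal is correct and follows essentially the same route as the paper: parts (\roi), (\roii) and (\roiv) are verbatim the paper's argument (expand in the Schur basis, apply the classical Littlewood--Richardson rule factorwise, invert via $\b'$; for (\roiv) use Proposition \ref{Proposition wt{S} Schur function} in the variables $X^{(t)}\cup\cdots\cup X^{(r)}$). For (\roiii) the paper instead regroups the $S_\nu$-expansion by $\zeta$-level and observes that the top-level coefficients pass unchanged to the $\wt{S}_\nu$-basis; your Neumann-series lemma showing $\b'_{\t\nu}=\d_{\t\nu}$ when $\zeta(\t)=\zeta(\nu)$ is exactly the triangularity fact that makes that replacement legitimate, so it is the same argument made explicit rather than a different one.
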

\begin{proof}
(\roi) is clear from the definitions. 
We prove (\roii). 
By Theorem \ref{Theorem character of Weyl} (\roii),  
we have 
\begin{align}
\label{c_la_mu^nu expand}
\wt{S}_\la(\bX) \wt{S}_\mu(\bX) 
&= 
	\Big( \sum_{\xi} \b_{\la \xi} S_{\xi}(\bX) \Big) \Big( \sum_{\eta} \b_{\mu \eta} S_{\eta}(\bX) \Big) 
\\
\notag
&= 
	\sum_{\xi, \eta} \b_{\la \xi} \b_{\mu \eta} S_\xi(\bX) S_{\eta}(\bX)
\\
\notag
&=\sum_{\xi, \eta}  \b_{\la \xi} \b_{\mu \eta} \left( \sum_{\t} \Big( \prod_{k=1}^r \LR^{\t^{(k)}}_{\xi^{(k)} \eta^{(k)}} \Big) S_{\t}(\bX) \right)
\\
\notag
&=\sum_{\xi, \eta}  \b_{\la \xi} \b_{\mu \eta} \left( \sum_{\t} \Big( \prod_{k=1}^r \LR^{\t^{(k)}}_{\xi^{(k)} \eta^{(k)}} \Big) 
	\Big(\sum_{\nu} \b'_{\t \nu} \wt{S}_\nu(\bX) \Big) \right)
\\
\notag 
&= 
\sum_{\nu} \left( \sum_{\xi, \eta,  \t} \b_{\la\xi} \b_{\mu \eta} \b'_{\t\nu} \prod_{k=1}^r \LR_{\xi^{(k)} \eta^{(k)}}^{\t^{(k)}} \right) 
\wt{S}_\nu(\bX).
\end{align} 
This implies (\roii). 
By Lemma \ref{Lemma properties b_la_mu} 
and the fact that $\LR^{\nu^{(k)}}_{\xi^{(k)} \eta^{(k)}} =0$ unless $|\nu^{(k)}|=|\xi^{(k)}|+|\eta^{(k)}|$, 
the equations \eqref{c_la_mu^nu expand} imply that 
\begin{align*}
&
\wt{S}_\la(\bX) \wt{S}_\mu(\bX) 
\\
&= 
\sum_{\nu  \atop \zeta(\nu) = \zeta(\la+\mu)} 
	\Big( \prod_{k=1}^r \LR^{\nu^{(k)}}_{\la^{(k)} \mu^{(k)}} \Big) S_{\nu}(\bX) 
	+\sum_{\nu \atop \zeta(\nu) \prec \zeta(\la+\mu)} 
		\Big( \sum_{\xi,\eta} \b_{\la \xi} \b_{\mu \eta} \prod_{k=1}^r \LR^{\nu^{(k)}}_{\xi^{(k)} \eta^{(k)}} \Big) S_{\nu}(\bX)
\\
&= 
	\sum_{\nu  \atop \zeta(\nu) = \zeta(\la+\mu)} 
		\Big( \prod_{k=1}^r \LR^{\nu^{(k)}}_{\la^{(k)} \mu^{(k)}} \Big) \wt{S}_{\nu}(\bX) 
	+\sum_{\nu \atop \zeta(\nu) \prec \zeta(\la+\mu)} a_{\la\mu}^\nu \wt{S}_\nu(\bX) 
	\qquad (a_{\la\mu}^\nu \in \ZZ).
\end{align*}
This implies (\roiii).

Finally, we prove (\roiv). 
By Proposition \ref{Proposition wt{S} Schur function}, 
we have 
\begin{align*}
\wt{S}_\la (\bX) \wt{S}_\mu(\bX) 
&= 
	S_{\la^{(t)}}(X^{(t)} \cup \cdots \cup X^{(r)}) S_{\mu^{(t)}}(X^{(t)} \cup \cdots \cup X^{(r)}) 
\\
&= 
	\sum_{\nu^{(t)}} \LR_{\la^{(t)} \mu^{(t)}}^{\nu^{(t)}} \, S_{\nu^{(t)}}(X^{(t)} \cup \cdot \cup X^{(r)}) 
\\
&= \sum_{\nu^{(t)}} \LR_{\la^{(t)} \mu^{(t)}}^{\nu^{(t)}} \,\wt{S}_{(\emptyset, \cdots, \emptyset, \nu^{(t)}, \emptyset, \cdots, \emptyset)} (\bX).
\end{align*}
This implies (\roiv).
\end{proof}

\para 
We have some conjectures for the number $c^\nu_{\la\mu}$ as follows. 
\begin{description}
\item[Conjecture 1] 
For $\la,\mu, \nu \in \vL_{\geq 0}^+$, 
the number $c_{\la\mu}^\nu $ is a non-negative integer. 
\end{description}
More strongly, we conjecture the following. 
\begin{description} 
\item[Conjecture 2] 
$c_{\la\mu}^\nu = \prod_{k=1}^r \LR_{\la^{(k)} \mu^{(k)}}^{\nu^{(k)}}$. 
\end{description}
Note that 
$\LR_{\la^{(k)} \mu^{(k)}}^{\nu^{(k)}} =0$ 
if $|\nu^{(k)}| \not= |\la^{(k)}|+|\mu^{(k)}|$, 
then 
Conjecture 2 
is equivalent to 
$c_{\la\mu}^\nu =0$ unless 
$\zeta(\nu)=\zeta(\la+\mu)$
by Proposition \ref{Proposition properties c_la_mu^nu} (\roiii). 

We remark that 
Conjecture 2 is true 
for $\la,\mu \in \vL_{\geq 0,r}^+$ 
such that 
$\la^{(l)}=\emptyset$ and $\mu^{(l)}=\emptyset $ unless $l=t$ for some $t$ 
by Proposition \ref{Proposition properties c_la_mu^nu} (\roiv).



\section{Decomposition matrices of cyclotomic $q$-Schur algebras}   
In this section, 
we consider the specialized cyclotomic $q$-Schur algebra 
$_F \Sc_{n,r}$ over a field $F$ with parameters 
$q, Q_1, \cdots,   Q_r \in F$ such that $q\not=0$. 
Hence, we omit the subscript $F$ for the objects over $F$. 
We also denote by 
$U_q(\Fg) = F \otimes_{\CA} \, _\CA U_q(\Fg)$ simply.  
Through this section, 
we assume that $m_k \geq n$ for any $k=1,\cdots,r$.

\para 
For $\Sc_{n,r}$-module $M$, 
we regard $M$ as a $U_q(\Fg)$-module through the homomorphism $\Phi_{\Fg}$. 
Then, by Lemma \ref{Lemma image Phi_g} (\roii), 
we see that 
a simple $U_q(\Fg)$-module appearing in the composition series of $M$ 
is the form 
$L(\la^{(1)}) \boxtimes \cdots \boxtimes L(\la^{(r)})$ 
($\la \in \vL_{n,r}^+(\Bm)$), 
where 
$L(\la^{(k)})$ is the simple $U_q(\Fgl_{m_k})$-module with  highest weight $\la^{(k)}$.

For a simple $\Sc_{n,r}$-module 
$L(\la)$ ($\la \in \vL_{n,r}^+(\Bm)$), 
let 
\[ 
x_{\la\mu} = [L(\la) : L(\mu^{(1)}) \boxtimes \cdots \boxtimes L(\mu^{(r)})]_{U_q(\Fg)}
\] 
be the multiplicity of 
$ L(\mu^{(1)}) \boxtimes \cdots \boxtimes L(\mu^{(r)})$ 
($\mu \in \vL_{n,r}^+(\Bm)$) 
in the composition series of $L(\la)$ as $U_q(\Fg)$-modules 
through $\Phi_{\Fg}$. 
Then we have the following lemma.

\begin{lem}\
\label{Lemma properties x_la_mu}
\begin{enumerate}
\item 
For $\la \in \vL_{n,r}^+(\Bm)$, 
$x_{\la\la}=1$. 

\item 
For $\la,\mu \in \vL_{n,r}^+(\Bm)$, 
if $x_{\la\mu} \not=0$, we have 
$\la \geq \mu$. 

\item 
For $\la,\mu \in \vL_{n,r}^+(\Bm)$,  
if  $\la \not=\mu$ and $\zeta(\la) = \zeta (\mu)$, 
we have $x_{\la\mu}=0$. 
\end{enumerate}
\end{lem}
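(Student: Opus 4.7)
The strategy mirrors Lemma \ref{Lemma properties b_la_mu} for the numbers $\b_{\la\mu}$: prove (ii) first, then deduce (i) and (iii) from character identities for $L(\la)$ as a $U_q(\Fg)$-module via $\Phi_\Fg$.

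Part (ii) is immediate: $L(\la)$ is a quotient of the Weyl module $W(\la) = \Sc_{n,r}^{-} v_\la$, so every weight of $L(\la)$ lies in $\la - Q^{+}$; if $x_{\la\mu} \neq 0$ the composition factor $L(\mu^{(1)}) \boxtimes \cdots \boxtimes L(\mu^{(r)})$ has highest weight $\mu$, which must occur as a weight of $L(\la)$, forcing $\la \geq \mu$. For (i), expand the one-dimensional $\la$-weight space via the composition series:
\begin{equation*}
1 = \dim 1_\la L(\la) = \sum_{\nu \in \vL_{n,r}^{+}(\Bm)} x_{\la\nu} \prod_{k=1}^{r} \dim L(\nu^{(k)})_{\la^{(k)}}.
\end{equation*}
The product is nonzero only when $\zeta(\nu) = \zeta(\la)$ and $\nu^{(k)} \geq \la^{(k)}$ for every $k$, while (ii) forces $\la \geq \nu$. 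When $\zeta(\nu) = \zeta(\la)$, the difference $\la - \nu$ lies in $\bigoplus_{(i,k) \in \vG'_{\Fg}(\Bm)} \ZZ \a_{(i,k)}$, so the dominance order on multi-partitions reduces to componentwise dominance on the constituents, and the two conditions together force $\nu = \la$, whence $x_{\la\la} = 1$.

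For (iii), set $\eta = \zeta(\la) = \zeta(\mu)$. Expanding the $\mu$-weight space and splitting off the $\nu = \la$ and $\nu = \mu$ contributions,
\begin{equation*}
\dim 1_\mu L(\la) = \prod_{k=1}^{r} \dim L(\la^{(k)})_{\mu^{(k)}} \,+\, x_{\la\mu} \,+\, \sum_{\mu < \nu < \la, \, \zeta(\nu) = \eta} x_{\la\nu} \prod_{k=1}^{r} \dim L(\nu^{(k)})_{\mu^{(k)}},
\end{equation*}
with all right-hand terms non-negative. Mirroring the combinatorial identity $\sharp \CT_0(\la, \mu) = \prod_{k} \sharp \CT_0(\la^{(k)}, \mu^{(k)})$ used in the proof of Lemma \ref{Lemma properties b_la_mu} (iii), the argument is completed by establishing the identity $\dim 1_\mu L(\la) = \prod_{k=1}^{r} \dim L(\la^{(k)})_{\mu^{(k)}}$ whenever $\zeta(\la) = \zeta(\mu)$, which immediately forces $x_{\la\mu} = 0$. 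Equivalently, one must show $P_\eta L(\la) \cong L(\la^{(1)}) \boxtimes \cdots \boxtimes L(\la^{(r)})$ as modules over the $\eta$-summand $\Sc^\eta = \bigotimes_{k=1}^{r} \Sc^\eta_{n_k, 1}(\vL_{n_k, 1}(m_k))$ of $\Phi_\Fg(U_q(\Fg))$ from Lemma \ref{Lemma image Phi_g}, where $P_\eta = \sum_{\nu: \zeta(\nu) = \eta} 1_\nu$ is the corresponding central idempotent.

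Establishing this isomorphism is the main obstacle. The proposed route first identifies $P_\eta W(\la) \cong W(\la^{(1)}) \boxtimes \cdots \boxtimes W(\la^{(r)})$: the submodule $\Sc^\eta \cdot v_\la$ is a highest-weight quotient of the $\Sc^\eta$-Weyl module with matching character by the combinatorial formula above, forcing equality. Applying the exact functor $P_\eta$ to the short exact sequence $0 \to \rad W(\la) \to W(\la) \to L(\la) \to 0$ then realizes $P_\eta L(\la)$ as the quotient of $W(\la^{(1)}) \boxtimes \cdots \boxtimes W(\la^{(r)})$ by $P_\eta \rad W(\la)$; since $v_\la \notin \rad W(\la)$, this submodule is properly contained in the unique maximal $\Sc^\eta$-submodule $\rad(W(\la^{(1)}) \boxtimes \cdots)$, so $P_\eta L(\la)$ surjects onto $L(\la^{(1)}) \boxtimes \cdots$. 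The hard step is the reverse inclusion $\rad(W(\la^{(1)}) \boxtimes \cdots) \subseteq P_\eta \rad W(\la)$: by the simplicity of $L(\la)$ and uniqueness of $\rad W(\la)$ as the maximal $\Sc_{n,r}$-submodule, one must show that $\Sc_{n,r} \cdot x = W(\la)$ for some $x \in \rad(W(\la^{(1)}) \boxtimes \cdots)$ would already imply $v_\la \in \Sc^\eta \cdot x$, yielding a contradiction. This requires analyzing how the generators $E_{(m_k, k)}, F_{(m_k, k)}$ of $\Sc_{n,r}$ outside $\Phi_\Fg(U_q(\Fg))$ propagate the action, using the triangular decomposition and the explicit commutation relations from Theorem \ref{Theorem presentation}.
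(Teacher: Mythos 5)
Your parts (i) and (ii) are correct and agree in substance with the paper, which disposes of both by ``investigating the weights in $L(\la)$''; your weight-space count for (i) is a valid way to make that precise. For (iii) you have also identified the right crux, and it is the same one the paper attacks: everything reduces to showing that the $\zeta(\la)$-block $P_\eta L(\la)$ --- equivalently the quotient of $L(\la)$ by $M(\la)=\bigoplus_{\mu:\,\zeta(\la)\succneqq\zeta(\mu)}L(\la)_\mu$, which is how the paper phrases it --- is the simple module $L(\la^{(1)})\boxtimes\cdots\boxtimes L(\la^{(r)})$. But at exactly this point the proposal stops being a proof: you reduce the claim to the inclusion $\rad\big(W(\la^{(1)})\boxtimes\cdots\boxtimes W(\la^{(r)})\big)\subseteq P_\eta\,\rad W(\la)$ and then state that establishing it ``requires analyzing how the generators $E_{(m_k,k)},F_{(m_k,k)}$ \dots propagate the action.'' That analysis is the entire content of part (iii); deferring it leaves a genuine gap.

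The missing argument is short but essential, and it is what the paper actually supplies. If the claimed isomorphism fails, one obtains a weight vector $x\in L(\la)_\mu$ with $\mu\neq\la$, $\zeta(\mu)=\zeta(\la)$, which is a $U_q(\Fg)$-highest weight vector modulo $M(\la)$, i.e.\ $e_{(i,k)}\cdot x\in M(\la)$ for all $(i,k)\in\vG'_\Fg(\Bm)$. By the triangular decomposition $\Sc_{n,r}=\Sc_{n,r}^-\Sc_{n,r}^0\Sc_{n,r}^+$ and the fact that every weight of $L(\la)$ is $\leq\la$, the $\la$-weight component of $\Sc_{n,r}\cdot x$ is spanned by monomials $E_{(i_1,k_1)}\cdots E_{(i_l,k_l)}\cdot x$ with $\sum_j\a_{(i_j,k_j)}=\la-\mu$; since $\zeta(\la)=\zeta(\mu)$, the coefficient of each $\a_{(m_k,k)}$ in $\la-\mu$ vanishes, so every $(i_j,k_j)$ lies in $\vG'_\Fg(\Bm)$ and the monomial lies in $U_q(\Fg)^+$ (one also needs $E_{(m_k,k)}\cdot x=0$, which holds because $L(\la)_{\mu+\a_{(m_k,k)}}=0$). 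Since $M(\la)$ is a $U_q(\Fg)$-submodule with $M(\la)_\la=0$ and $\mu\neq\la$, it follows that $U_q(\Fg)^+\cdot x\subseteq \CK x+M(\la)$ has trivial $\la$-weight component, hence $(\Sc_{n,r}\cdot x)_\la=0$ and $\Sc_{n,r}\cdot x$ is a proper submodule of $L(\la)$, contradicting simplicity. Without this root-coefficient argument (or an equivalent one) your isomorphism $P_\eta L(\la)\cong L(\la^{(1)})\boxtimes\cdots\boxtimes L(\la^{(r)})$, and hence (iii), remains unproved.
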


\begin{proof}
By the definition of Weyl modules (see \ref{Definition Weyl}), 
we have 
$W(\la)=\Sc_{n,r}^- \cdot v_\la$, 
and 
$L(\la)$ is the unique simple top $W(\la)/ \rad W(\la)$ of $W(\la)$.  
Thus, 
by investigating the weights in $L(\la)$, 
we have (\roi) and (\roii). 

We prove (\roiii). 
We denote by $\ol{v}_\la$ the image of $v_\la$ 
under the natural surjection $W(\la) \ra L(\la)$. 
Then, we have 
$L(\la) = \Sc_{n,r}^- \cdot \ol{v}_\la$. 
One sees that 
\[
	M(\la) = \bigoplus_{\mu \in \vL_{n,r}(\Bm) \atop \zeta(\la) \succneqq \zeta(\mu)} 
		L(\la)_\mu
\]
is a $U_q(\Fg)$-submodule of $L(\la)$ 
since 
$\zeta (\mu \pm \a_{(i,k)})=\zeta (\mu)$ 
for any 
$(i,k) \in \vG_{\Fg}'(\Bm)$. 
It is clear that 
$M(\la)$ is also an $\Sc_{n,r}^-$-submodule of $L(\la)$, 
and 
$L(\la)/M(\la) = \Sc_{n,r}^- \cdot (\ol{v}_\la + M(\la))$. 
For $F_{(i_1,k_1)} F_{(i_2,k_2)} \cdots F_{(i_l,k_l)} \in \Sc_{n,r}^-$,  
if $i_j=m_{k_j}$ for some $j$, 
one sees that 
$F_{(i_1,k_1)}  \cdots F_{(i_l,k_l)} \cdot \ol{v}_\la \in M(\la)$.  
This implies that 
$L(\la)/M(\la)$ 
is generated by $\ol{v}_\la + M(\la)$ 
as a $U_q(\Fg)$-module, 
namely we have 
$L(\la)/M(\la) = U_q(\Fg) \cdot (\ol{v}_\la + M(\la))$. 
Hence, 
we have the surjective homomorphism of $U_q(\Fg)$-modules 
$\psi : L(\la)/M(\la) \ra L(\la^{(1)}) \boxtimes \cdots \boxtimes L(\la^{(r)})$ 
such that 
$\ol{v}_\la + M(\la) \mapsto \ol{v}_{\la^{(1)}} \boxtimes \cdots \boxtimes \ol{v}_{\la^{(r)}}$, 
where 
$\ol{v}_{\la^{(k)}}$ is a highest weight vector of $L(\la^{(k)})$ with the highest weight $\la^{(k)}$.  
We claim that $\psi$ is an isomorphism. 
If $\psi$ is not an isomorphism, 
there exists an element 
$x \in L(\la)_\mu$ 
such that $\la \not= \mu \in \vL_{n,r}^+(\Bm)$, $\zeta(\mu)= \zeta (\la)$ 
and $e_{(i,k)} \cdot x \in M(\la)$ for any $(i,k) \in \vG'_{\Fg}(\Bm)$, 
namely 
$x + M(\la) \in L(\la)/ M(\la)$ 
is a highest weight vector of highest weight $\mu$ as a $U_q(\Fg)$-module.  
On the other hand, 
we have 
$E_{(m_k,k)} \cdot x =0$ for $k=1,\cdots, r-1$ 
since $\zeta ( \mu + \a_{(m_k,k)}) \succ  \zeta (\mu) = \zeta (\la)$. 
Thus, we have that 
$E_{(i,k)} \cdot x \in M(\la)$ for any $(i,k) \in \vG'(\Bm)$.  
This implies  that 
$\Sc_{n,r} \cdot x $ 
is a proper $\Sc_{n,r}$-submodule of $L(\la)$ 
which contradict to the irreducibility of $L(\la)$ as an $\Sc_{n,r}$-module. 
Hence, $\psi$ is an isomorphism. 
Then, 
the isomorphism $L(\la)/M(\la) \cong  L(\la^{(1)}) \boxtimes \cdots \boxtimes L(\la^{(r)})$ 
together with  the definition of $M(\la)$ implies (\roiii). 
\end{proof}

\para 
For an algebra $\CA$, 
let $\CA \cmod$ be the category of finitely generated $\CA$-modules, 
and 
$K_0(\CA \cmod)$ 
be the Grothendieck group of $\CA \cmod$. 
For $M \in \CA \cmod$, 
we denote by $[M]$ the image of $M$ in $K_0(\CA \cmod)$. 

\para 
For 
$\la,\mu \in \vL_{n,r}^+(\Bm)$, 
let 
$d_{\la\mu} =[W(\la) : L(\mu)]_{\Sc_{n,r}}$ 
be the multiplicity of $L(\mu)$ in the composition series of $W(\la)$ as $\Sc_{n,r}$-modules, 
and 
$\ol{d}_{\la\mu} = [W(\la^{(1)}) \boxtimes \cdots \boxtimes W(\la^{(r)}) : L(\mu^{(1)}) \boxtimes \cdots \boxtimes L(\mu^{(r)})]_{U_q(\Fg)}$ 
be the multiplicity of 
$L(\mu^{(1)}) \boxtimes \cdots \boxtimes L(\mu^{(r)})$ 
in the composition series of 
$W(\la^{(1)}) \boxtimes \cdots \boxtimes W(\la^{(r)})$ 
as $U_q(\Fg)$-modules. 
Put 
\begin{align*}
&
D=\big( d_{\la\mu} \big)_{\la,\mu \in \vL_{n,r}^+(\Bm)}, 
& 
\ol{D}= \big( \ol{d}_{\la\mu} \big)_{\la,\mu \in \vL_{n,r}^+(\Bm)},  
\\
&
X=\big( x_{\la\mu} \big)_{\la,\mu \in \vL_{n,r}^+(\Bm)}, 
&
B=\big( \b_{\la\mu} \big)_{\la,\mu \in \vL_{n,r}^+(\Bm)}.
\end{align*}
Then the decomposition matrix $D$ of $\Sc_{n,r}$ 
is factorized as follows.

\begin{thm}
\label{Theorem factorization decom matrix}
We have that 
$B \cdot  \ol{D} = D \cdot X$. 
\end{thm}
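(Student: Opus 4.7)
The plan is to compute the class $[W(\la)]$ in $K_0(U_q(\Fg)\cmod)$ in two different ways and compare. Viewing $W(\la)$ as an $\Sc_{n,r}$-module we have $[W(\la)] = \sum_{\nu} d_{\la\nu}\,[L(\nu)]$ in $K_0(\Sc_{n,r}\cmod)$; restricting along $\Phi_\Fg$ and using the definition of $x_{\nu\mu}$ gives, in $K_0(U_q(\Fg)\cmod)$,
\[
[W(\la)] \;=\; \sum_{\mu \in \vL_{n,r}^+(\Bm)} (DX)_{\la\mu}\,[L(\mu^{(1)}) \boxtimes \cdots \boxtimes L(\mu^{(r)})].
\]

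The core of the proof is to establish the parallel identity
\[
[W(\la)] \;=\; \sum_{\mu \in \vL_{n,r}^+(\Bm)} \b_{\la\mu}\,[W(\mu^{(1)}) \boxtimes \cdots \boxtimes W(\mu^{(r)})]
\]
in $K_0(U_q(\Fg)\cmod)$ \emph{over the field $F$}, where $\b_{\la\mu}$ is the multiplicity computed over $\CK$ as in \eqref{def b_la_mu}. For this I would use a character argument together with the $\CK$-level decomposition \eqref{def b_la_mu}. Both the cyclotomic Weyl module $_\CA W(\la)$ (whose weight space $_\CA W(\la)_\mu$ has a basis indexed by $\CT_0(\la,\mu)$ by \cite{DJM98}) and the quantum $\Fgl_{m_k}$-Weyl modules $_\CA W(\mu^{(k)})$ admit $\CA$-forms whose weight multiplicities are specialization-independent; hence $\ch W(\la) = \ch \,_\CK W(\la)$, and similarly the character of $W(\mu^{(1)}) \boxtimes \cdots \boxtimes W(\mu^{(r)})$ agrees with its $\CK$-analog. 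Over $\CK$ the isomorphism \eqref{def b_la_mu} immediately gives the displayed character identity, and the identity therefore persists over $F$. Finally, the characters of the simple $U_q(\Fg)$-modules $L(\mu^{(1)}) \boxtimes \cdots \boxtimes L(\mu^{(r)})$, $\mu \in \vL_{n,r}^+(\Bm)$, form a uni-triangular system with respect to the dominance order on highest weights, so the character map is injective on the Grothendieck subgroup generated by modules with such composition factors, and the identity lifts from characters to $K_0$.

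Expanding each class $[W(\mu^{(1)}) \boxtimes \cdots \boxtimes W(\mu^{(r)})]$ via the decomposition numbers $\ol{d}_{\mu\nu}$ then gives $[W(\la)] = \sum_\nu (B\ol{D})_{\la\nu}\,[L(\nu^{(1)}) \boxtimes \cdots \boxtimes L(\nu^{(r)})]$. Comparing with the first expression and once more invoking the linear independence of simple $U_q(\Fg)$-characters yields $DX = B\ol{D}$. The main technical point is the specialization-invariance of characters, which rests on the existence of $\CA$-forms with weight bases independent of the base ring; everything else is straightforward book-keeping in the two Grothendieck groups linked by $\Phi_\Fg$.
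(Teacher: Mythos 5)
Your proposal is correct and follows essentially the same route as the paper: both compute $[W(\la)]$ in $K_0(U_q(\Fg)\cmod)$ once through $D$ and $X$ and once through $B$ and $\ol{D}$, and compare coefficients of the linearly independent classes $[L(\nu^{(1)})\boxtimes\cdots\boxtimes L(\nu^{(r)})]$. The only difference is that where the paper justifies $[W(\la)]=\sum_\mu \b_{\la\mu}[W(\mu^{(1)})\boxtimes\cdots\boxtimes W(\mu^{(r)})]$ over $F$ by ``taking a suitable modular system,'' you substitute an equivalent (and somewhat more explicit) argument via specialization-invariance of characters of the $\CA$-forms plus uni-triangularity of the simple $U_q(\Fg)$-characters; this is a valid alternative justification of that single step, not a different proof.
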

\begin{proof}
By the definitions, 
for $\la \in \vL_{n,r}^+(\Bm)$, 
we have 
\begin{align*}
[W(\la)] 
&=
	\sum_{\mu \in \vL_{n,r}^+(\Bm)} d_{\la \mu} [L(\mu)] 
\\
&= 
	\sum_{\mu \in \vL_{n,r}^+(\Bm)} d_{\la \mu} 
		\Big( \sum_{\nu \in \vL_{n,.r}^+(\Bm)} x_{\mu \nu} [L(\nu^{(1)}) \boxtimes \cdots \boxtimes L(\nu^{(r)})] \Big) 
\\
&= 
	\sum_{\nu \in \vL_{n,r}^+} \Big( \sum_{\mu \in \vL_{n,r}^+(\Bm)} d_{\la\mu} x_{\mu\nu} \Big)   
		[L(\nu^{(1)}) \boxtimes \cdots \boxtimes L(\nu^{(r)})] 
\end{align*}
in $\CK_0(U_q(\Fg)\cmod)$. 
On the other hand, 
by taking a suitable modular system for $\Sc_{n,r}$, 
we have 
\begin{align*}
[W(\la)] 
&= 
	\sum_{\mu \in \vL_{n,r}^+(\Bm)} \b_{\la\mu} [W(\mu^{(1)}) \boxtimes \cdots \boxtimes W(\mu^{(r)})] 
\\
&= 
	\sum_{\mu \in \vL_{n,r}^+(\Bm)} \b_{\la\mu} 
		\Big( \sum_{\nu \in \vL_{n,r}^+(\Bm)} \ol{d}_{\mu \nu} [ L(\nu^{(1)}) \boxtimes \cdots \boxtimes L(\nu^{(r)})] \Big) 
\\
&= 
	\sum_{\nu \in \vL_{n,r}^+(\Bm)} \Big( \sum_{\mu \in \vL_{n,r}^+(\Bm)} \b_{\la\mu} \ol{d}_{\mu \nu} \Big) 
		[ L(\nu^{(1)}) \boxtimes \cdots \boxtimes L(\nu^{(r)})]
\end{align*} 
in $K_0(U_q(\Fg) \cmod)$. 
By comparing the coefficients of  $[ L(\nu^{(1)}) \boxtimes \cdots \boxtimes L(\nu^{(r)})]$, 
we obtain the claim of the theorem.  
\end{proof}

As a corollary of Theorem \ref{Theorem factorization decom matrix}, 
we have the product formula for decomposition numbers of $\Sc_{n,r}$ 
which has already  
obtained by \cite{Saw} in another method.

\begin{cor}
\label{Corollary prod formula}
For $\la, \mu \in \vL_{n,r}^+(\Bm)$ such that $\zeta(\la) = \zeta (\mu)$, 
we have 
\[ 
	d_{\la\mu} = \ol{d}_{\la\mu} = \prod_{k=1}^r d_{\la^{(k)} \mu^{(k)}}, 
\]
where 
$d_{\la^{(k)} \mu^{(k)}} =[W(\la^{(k)}) : L(\mu^{(k)})]$ 
is the decomposition number of $U_q(\Fgl_{m_k})$. 
\end{cor}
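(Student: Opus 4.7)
The plan is to derive Corollary \ref{Corollary prod formula} directly from the matrix identity $B \cdot \ol{D} = D \cdot X$ of Theorem \ref{Theorem factorization decom matrix}, by exploiting the vanishing properties of $B$ and $X$ between distinct $r$-partitions sharing the same $\zeta$-value.

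First I would fix $\la,\mu \in \vL_{n,r}^+(\Bm)$ with $\zeta(\la) = \zeta(\mu)$ and compare the $(\la,\mu)$-entries of both sides of $B \cdot \ol{D} = D \cdot X$. On the left, a term $\b_{\la\nu} \ol{d}_{\nu\mu}$ can be non-zero only when $\la \ge \nu$ (Lemma \ref{Lemma properties b_la_mu} (\roii)) and, since $\ol{d}_{\nu\mu} = \prod_k [W(\nu^{(k)}) : L(\mu^{(k)})]$ forces $|\nu^{(k)}| = |\mu^{(k)}|$ for every $k$, also $\zeta(\nu) = \zeta(\mu) = \zeta(\la)$. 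By Lemma \ref{Lemma properties b_la_mu} (\roiii) this forces $\nu = \la$, so the sum on the left collapses to $\b_{\la\la}\,\ol{d}_{\la\mu} = \ol{d}_{\la\mu}$.

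For the right-hand side, I would first observe that each simple root $\a_{(i,k)}$ with $i < m_k$ preserves $\zeta$, while $\a_{(m_k,k)} = \ve_{(m_k,k)} - \ve_{(1,k+1)}$ increases the $k$-th partial sum of $\zeta$ by one; hence $\la \geq \nu$ in $P$ implies $\zeta(\la) \succeq \zeta(\nu)$ in $\ZZ_{\ge 0}^r$. Combining this with $d_{\la\nu} x_{\nu\mu} \ne 0 \Rightarrow \la \ge \nu \ge \mu$ and the hypothesis $\zeta(\la) = \zeta(\mu)$ forces $\zeta(\nu) = \zeta(\mu)$. Lemma \ref{Lemma properties x_la_mu} (\roiii) then forces $\nu = \mu$, so the right-hand side collapses to $d_{\la\mu}\,x_{\mu\mu} = d_{\la\mu}$. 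Equating the two collapsed sums yields $d_{\la\mu} = \ol{d}_{\la\mu}$.

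The remaining identity $\ol{d}_{\la\mu} = \prod_{k=1}^r d_{\la^{(k)}\mu^{(k)}}$ is a standard consequence of the tensor-product decomposition $U_q(\Fg) \cong U_q(\Fgl_{m_1}) \otimes \cdots \otimes U_q(\Fgl_{m_r})$: the composition multiplicity of an outer tensor product $L(\mu^{(1)}) \boxtimes \cdots \boxtimes L(\mu^{(r)})$ inside $W(\la^{(1)}) \boxtimes \cdots \boxtimes W(\la^{(r)})$ factors as the product of the componentwise multiplicities, thanks to the complete reducibility of finite-dimensional $U_q(\Fgl_{m_k})$-modules with respect to weight spaces and the classification of simples of a tensor-product algebra. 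The only subtle point in the whole argument is correctly propagating the $\zeta$-equality through both sums so that parts (\roiii) of Lemmas \ref{Lemma properties b_la_mu} and \ref{Lemma properties x_la_mu} become applicable; once this is in place, each matrix becomes the identity on the $\zeta$-block containing $\la$, and the product formula drops out.
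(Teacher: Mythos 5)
Your proposal is correct and follows essentially the same route as the paper: compare the $(\la,\mu)$-entries of $B\cdot\ol{D}=D\cdot X$ from Theorem \ref{Theorem factorization decom matrix} and collapse each sum using parts (\roi)--(\roiii) of Lemmas \ref{Lemma properties b_la_mu} and \ref{Lemma properties x_la_mu}, the key point being that any intermediate $\nu$ is forced to satisfy $\zeta(\nu)=\zeta(\la)=\zeta(\mu)$. Your justifications differ only cosmetically (you deduce $\zeta(\nu)=\zeta(\mu)$ from $\ol{d}_{\nu\mu}\not=0$ directly, and you spell out the monotonicity of $\zeta$ under the dominance order, which the paper leaves implicit), so there is nothing substantive to add.
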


\begin{proof}
By Lemma \ref{Lemma properties b_la_mu} (\roii),  
for $\la, \mu, \nu \in \vL_{n,r}^+(\Bm)$, 
if 
$\b_{\la \nu} \ol{d}_{\nu \mu} \not=0$, 
then we have 
$\la \geq \nu \geq \mu$. 
Thus, 
if $\zeta(\la)=\zeta(\mu)$, 
we have 
\begin{align*}
\sum_{\nu \in \vL_{n,r}^+(\Bm)} \b_{\la\nu} \ol{d}_{\nu\mu} 
= 
\sum_{\nu \in \vL_{n,r}^+(\Bm) \atop \zeta(\la) = \zeta (\nu) = \zeta (\mu)} \b_{\la\nu} \ol{d}_{\nu\mu} 
= 
\ol{d}_{\la\mu},
\end{align*}
where the last equation follows from Lemma \ref{Lemma properties b_la_mu} (\roi) and (\roiii). 
Similarly, by using Lemma \ref{Lemma properties x_la_mu}, 
we see that 
$\sum_{\nu \in \vL_{n,r}^+(\Bm)} d_{\la \nu} x_{\nu\mu} =d_{\la\mu}$. 
Hence, 
Theorem \ref{Theorem factorization decom matrix} 
implies the claim of the corollary.  
\end{proof}

\remark 
\label{remark product formula}
In \cite{SW}, 
we also obtained the product formulas for decomposition numbers of $\Sc_{n,r}$ 
which are natural generalization of one in \cite{Saw} as follows.
Take $\Bp=(r_1,\cdots, r_g) \in \ZZ_{>0}^g$ such that $r_1+\cdots + r_g=r$ 
as in \ref{Definition Bp}. 
Then, 
for $\la,\mu \in \vL_{n,r}^+(\Bm)$ such that $\zeta^{\Bp}(\la)=\zeta^{\Bp}(\mu)$, 
we have 
\begin{align}
\label{prod formula}
 d_{\la \mu} = \prod_{k=1}^g d_{\la^{[k]_{\Bp}} \mu^{[k]_{\Bp}}} 
 \end{align}
by \cite[Theorem 4.17]{SW}, 
where 
$d_{\la^{[k]_{\Bp}} \mu^{[k]_{\Bp}}}$ 
is the decomposition number of $\Sc_{n_k, r_k}$ ($n_k=|\la^{[k]}|$) 
with parameters $q, Q_{p_k+1},\cdots, Q_{p_k +r_k}$. 
However, 
the formula \eqref{prod formula} for general $\Bp$ ($\not= (1, \cdots,1)$) 
is not obtained in a similar way as in Corollary \ref{Corollary prod formula} 
since 
$\ol{\Sc}_{n,r}^\Bp$  
does not realize as a subalgebra of $\Sc_{n,r}$ 
in a similar  way as in Lemma \ref {Lemma image Phi_g}, 
where 
$\ol{\Sc}_{n,r}^\Bp$ 
is a subquotient algebra of $\Sc_{n,r}$ defined in \cite[2.12]{SW}. 
(Note that 
$\ol{\Sc}^{\Bp}_{n,r} \cong \bigoplus_{(n_1,\cdots, n_g) \atop n_1+\cdots +n_g=n}  \Sc_{n_1,r_1} \otimes \cdots \otimes \Sc_{n_g,r_g}$ 
by \cite[Theorem 4.15]{SW}. 
Thus, 
if $\Bp=(1,\cdots, 1)$, 
$\ol{\Sc}^\Bp_{n,r}$ coincides with the right-hand side of  the isomorphism in Lemma \ref {Lemma image Phi_g}.) 
Hence, 
in order to obtain the formula \eqref{prod formula} for general $\Bp$,  
it is essential to 
take the subquotient algebra $\ol{\Sc}^{\Bp}_{n,r}$  
as in \cite{SW}. 
\\

For  special parameters,   
we see that 
the matrix $X$ becomes the identity matrix as the following corollary. 

\begin{cor}\
\label{Corollary Q_1=Q_2= cdots =Q_r=0}
\begin{enumerate}
\item 
If $Q_1=Q_2= \cdots = Q_r=0$, 
the matrix $X$ is the identity matrix. 
In particular,  
we have 
$D= B \cdot \ol{D}$. 

\item 
If 
$q=1$, $Q_1= Q_2 = \cdots = Q_r$ (not necessary to be $0$), 
the matrix $X$ is the identity matrix. 
Moreover, we have 
$D=B$ 
if 
$\operatorname{char} F=0$. 
\end{enumerate}
\end{cor}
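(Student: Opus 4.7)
The essential claim in both (i) and (ii) is that $X$ is the identity matrix; the factorization $D = B \cdot \ol{D}$ then follows immediately from Theorem~\ref{Theorem factorization decom matrix}, using Lemma~\ref{Lemma properties x_la_mu} (i) to see that $X$ is invertible over $\ZZ$. For the char $F = 0$ addendum in (ii), the quantum group $U_q(\Fgl_{m_k})$ at $q = 1$ coincides with $U(\Fgl_{m_k})$, whose finite-dimensional representations are completely reducible by Weyl's theorem; hence every Weyl module $W(\la^{(k)})$ is simple, $\ol{D} = I$, and $D = B$. Thus the real task is to prove, under either parameter specialization, that the simple $\Sc_{n,r}$-module $L(\la)$ restricts via $\Phi_\Fg$ to the simple $U_q(\Fg)$-module $L(\la^{(1)}) \boxtimes \cdots \boxtimes L(\la^{(r)})$.

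\textbf{Reduction via $M(\la)$.} Recall from the proof of Lemma~\ref{Lemma properties x_la_mu} (iii) the $U_q(\Fg)$-isomorphism $\psi : L(\la)/M(\la) \isom L(\la^{(1)}) \boxtimes \cdots \boxtimes L(\la^{(r)})$, where $M(\la) = \bigoplus_{\zeta(\la) \succneqq \zeta(\mu)} L(\la)_\mu$. Combined with Lemma~\ref{Lemma properties x_la_mu}, the condition $X = I$ is equivalent to the vanishing $M(\la) = 0$ for every $\la \in \vL_{n,r}^+(\Bm)$. I would establish this by endowing the $U_q(\Fg)$-module $L(\la^{(1)}) \boxtimes \cdots \boxtimes L(\la^{(r)})$ with a compatible $\Sc_{n,r}$-action in which $E_{(m_k,k)}$ and $F_{(m_k,k)}$ act as zero (this is forced, since the target has no weights differing from an existing one by $\pm\alpha_{(m_k,k)}$), and in which $1_\mu$ acts as the projector onto the $\mu$-weight space. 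The highest weight vector then generates a copy of the one-dimensional $\Sc_{n,r}^{\geq 0}$-module $\theta_\la$, producing a surjection $W(\la) \twoheadrightarrow L(\la^{(1)}) \boxtimes \cdots \boxtimes L(\la^{(r)})$ of $\Sc_{n,r}$-modules. Because any $\Sc_{n,r}$-submodule of the target is automatically a $U_q(\Fg)$-submodule (the extra generators act as zero) and the target is $U_q(\Fg)$-simple, it is a simple $\Sc_{n,r}$-quotient of $W(\la)$, hence isomorphic to the unique simple top $L(\la)$. This forces $M(\la) = 0$.

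\textbf{The main obstacle.} Verifying the relations of Theorem~\ref{Theorem presentation} (ii) for the proposed action reduces, after discarding the $U_q(\Fg)$-relations and the relations trivially satisfied because $E_{(m_k,k)} = F_{(m_k,k)} = 0$, to a single identity coming from \eqref{S-6} with $(i,k) = (j,l) = (m_k,k)$: since the bracket $E_{(m_k,k)} F_{(m_k,k)} - F_{(m_k,k)} E_{(m_k,k)}$ vanishes, one needs $\sum_\mu \eta^\mu_{(m_k,k)} = 0$ on $L(\la^{(1)}) \boxtimes \cdots \boxtimes L(\la^{(r)})$. Under (i), where $Q_{k+1} = 0$, this becomes $q^{-1}\sigma^\mu_{(m_k,k)} = q\,\sigma^\mu_{(1,k+1)}$ on each weight space; under (ii), where $q = 1$ and $Q_{k+1} = Q$, it becomes $\sigma^\mu_{(m_k,k)} - \sigma^\mu_{(1,k+1)} = Q(\mu^{(k)}_{m_k} - \mu^{(k+1)}_1) \cdot 1_\mu$ (using $[m]_{q=1} = m$ and $q^{\mu^{(k)}_{m_k} - \mu^{(k+1)}_1} = 1$). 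The hardest step is the explicit evaluation of the Jucys--Murphy elements $\sigma^\mu_{(m_k,k)}$ and $\sigma^\mu_{(1,k+1)}$ on the weight spaces of the tensor product; for (i) the nilpotency $L_1^r = 0$ of the Hecke generator, and for (ii) the nilpotency of each $L_i - Q$, should provide the structural control needed to verify the required scalar identities. Once this vanishing is confirmed, the argument above closes both (i) and (ii), and the char $F = 0$ addendum follows from Weyl complete reducibility as indicated.
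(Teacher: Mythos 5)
Your strategy is genuinely different from the paper's: you try to put an explicit $\Sc_{n,r}$-module structure on $L(\la^{(1)})\boxtimes\cdots\boxtimes L(\la^{(r)})$ with $E_{(m_k,k)}=F_{(m_k,k)}=0$ and identify it with $L(\la)$, whereas the paper argues directly inside $L(\la)$ that a putative $U_q(\Fg)$-highest weight vector of weight $\mu$ with $\zeta(\mu)\neq\zeta(\la)$ would generate a proper $\Sc_{n,r}$-submodule. Structurally your reduction is sound (the equivalence $X=I\Leftrightarrow M(\la)=0$, the fact that only relation \eqref{S-6} at $(i,k)=(m_k,k)$ is nontrivial, and the simplicity transfer are all correct). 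But the proof has a genuine gap exactly at the step you flag as the ``main obstacle'': you never verify that $\s_{(m_k,k)}^\mu$ and $\s_{(1,k+1)}^\mu$ act by the required scalars on the proposed module, and the appeal to ``nilpotency of $L_1$'' is not an argument. This is where all the content of the corollary lives. The mechanism that actually works (and is the heart of the paper's proof, its Claim A) is that when $Q_1=\cdots=Q_r=0$ the algebra $\He_{n,r}$ carries a $\ZZ/r\ZZ$-grading with $\deg T_0=\ol{1}$, $\deg T_i=\ol{0}$; comparing degrees of $\s^{\la}_{(i,k)}(m_\la)=m_\la(L_{N+1}+\cdots)$ with those of the monomials $F^{(c'_1)}_{(i'_1,k'_1)}\cdots E^{(c_l)}_{(i_l,k_l)}1_\la(m_\la)$ forces every nonzero term in the triangular decomposition of these Jucys--Murphy elements to contain a factor $E_{(m_j,j)}$ for some $j$. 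Only with that statement in hand do $\s^\mu_{(m_k,k)}$ and $\s^\mu_{(1,k+1)}$ act as zero on a module concentrated in a single $\zeta$-class, closing relation \eqref{S-6}. Without it your module structure is not established and the surjection $W(\la)\twoheadrightarrow L(\la^{(1)})\boxtimes\cdots\boxtimes L(\la^{(r)})$ does not exist.

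A second, smaller problem is part (ii). Your plan is to redo the Jucys--Murphy computation directly at $q=1$, $Q_1=\cdots=Q_r=Q\neq 0$, where you would need $\s^\mu_{(m_k,k)}-\s^\mu_{(1,k+1)}=Q(\mu^{(k)}_{m_k}-\mu^{(k+1)}_1)1_\mu$ on the module; there is no grading argument available here and no evident reason this identity holds a priori. The paper sidesteps this entirely: the assignment $T_0\mapsto T_0'-Q'$, $T_i\mapsto T_i'$ defines an isomorphism $\He_{n,r}\ra\He_{n,r}'$ carrying $M^\mu$ to $M'^\mu$, hence $\Sc_{n,r}\cong\Sc_{n,r}'$, reducing (ii) to (i). You should incorporate that reduction rather than attempting a second direct computation. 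Your treatment of the $\cha F=0$ addendum ($\ol{D}=I$ at $q=1$ by complete reducibility) matches the paper and is fine.
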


\begin{proof}
Assume that $Q_1=Q_2= \cdots = Q_r=0$. 
We denote by $E_{(i,k)}^{(c)} = 1 \otimes E_{(i,k)}^c/[c]!  $ 
	(resp. $F_{(i,k)}^{(c)} = 1 \otimes F_{(i,k)}^c/[c]!$) $\in F \otimes_{\CA} \, _\CA \Sc_{n,r} \cong \, _F \Sc_{n,r}$. 
By the triangular decomposition of $\Sc_{n,r}$, 
we have 
\[ 
	\s_{(i,k)}^\la = \sum r_{(i_1, k_1, c_1), \cdots, (i_l, k_l, c_l)}^{(i_1', k_1', c_1'), \cdots, (i_l', k_l', c_l')} 
	F_{(i_1',k_1')}^{(c_1')} \cdots F_{(i_l', k_l')}^{(c_l')} E_{(i_1, k_1)}^{(c_1)} \cdots E_{(i_l,k_l)}^{(c_l)} 1_\la, 
\] 
for some  $ r_{(i_1, k_1, c_1), \cdots, (i_l, k_l, c_l)}^{(i_1', k_1', c_1'), \cdots, (i_l', k_l', c_l')}  \in F$. 
First,  we prove the following claim. 
\begin{description} 
\item[Claim A] 
If 
$ r_{(i_1, k_1, c_1), \cdots, (i_l, k_l, c_l)}^{(i_1', k_1', c_1'), \cdots, (i_l', k_l', c_l')}  \not=0$ 
and 
$F_{(i_1',k_1')}^{(c_1')} \cdots F_{(i_l', k_l')}^{(c_l')} E_{(i_1, k_1)}^{(c_1)} \cdots E_{(i_l,k_l)}^{(c_l)} 1_\la \not=0$, 
then 
we have 
$\zeta (\la + c_1 \a_{(i_1,k_1)} + \cdots + c_l \a_{(i_l,k_l)}) \succ \zeta (\la)$. 
\end{description}
Note that $Q_1=Q_2= \cdots = Q_r=0$, 
we see easily that 
$\He_{n,r}$ is a $\ZZ/ r \ZZ$-graded algebra 
with $\deg (T_0)=\ol{1}$ and $\deg (T_i)= \ol{0}$, 
where we put $\ol{k} = k + r \ZZ \in \ZZ/ r \ZZ$ for $k \in \ZZ$. 
We can also check that 
$m_\la$ ($\la \in \vL_{n,r}(\Bm)$) is a homogeneous element of $\He_{n,r}$. 
Since 
$\s_{(i,k)}^\la (m_\la) = m_{\la}\cdot (L_{N+1}+ \cdots + L_{N+ \la_{i}^{(k)}})$ 
($N= \sum_{l=1}^{k-1} |\mu^{(l)}|+ \sum_{j=1}^{i-1}\mu_j^{(k)}$),  
we have 
$\deg (\s_{(i,k)}^\la (m_\la)) = \deg (m_\la) + \ol{1}$. 
On the other hand, 
by \cite[Lemma 6.10]{W}, 
we see that 
$F_{(i_1',k_1')}^{(c_1')} \cdots F_{(i_l', k_l')}^{(c_l')} E_{(i_1, k_1)}^{(c_1)} \cdots E_{(i_l,k_l)}^{(c_l)} 1_\la 
(m_\la)$ 
is a homogeneous element of $\He_{n,r}$ 
with degree $\deg(m_\la)$ 
if $i_j \not=m_{k_j}$ and $i'_j \not=m_{k'_j}$ for any $j=1,\cdots, l$. 
Thus, 
if  
$ r_{(i_1, k_1, c_1), \cdots, (i_l, k_l, c_l)}^{(i_1', k_1', c_1'), \cdots, (i_l', k_l', c_l')}  \not=0$ 
and 
$F_{(i_1',k_1')}^{(c_1')} \cdots F_{(i_l', k_l')}^{(c_l')} E_{(i_1, k_1)}^{(c_1)} \cdots E_{(i_l,k_l)}^{(c_l)} 1_\la \not=0$, 
then there exists $j$ such that $i_j = m_{k_j}$, 
and 
this implies that 
$\zeta (\la + c_1 \a_{(i_1,k_1)} + \cdots + c_l \a_{(i_l,k_l)}) \succ \zeta (\la)$. 
Now, we proved Claim A. 

We have already shown that 
$x_{\la\la}=1$, 
and 
$x_{\la\mu}=0$ for  
$\la \not=\mu$ such that $\zeta(\la) = \zeta (\mu)$ 
in Lemma \ref{Lemma properties x_la_mu}. 
Thus, it is enough to show that 
$x_{\la\mu}=0$ for $\la,\mu \in \vL_{n,r}^+(\Bm)$ such that $\zeta (\la) \not= \zeta (\mu)$.

Suppose that 
$x_{\la\mu}\not=0$ for some  $\la,\mu \in \vL_{n,r}^+(\Bm)$ such that $\zeta (\la) \not= \zeta (\mu)$. 
We recall that 
$L(\la) = \Sc_{n,r}^- \cdot \ol{v}_\la$, 
where $\ol{v}_\la = v_\la + \rad W(\la) \in W(\la) / \rad W(\la) \cong L(\la)$.  
Then, 
it is clear that 
$L(\la)_\mu \not=0$. 
This implies the existence of a non-zero element 
\[ 
v'=\sum r_{(i_1,k_1), \cdots, (i_c,k_c)} F_{(m_{k'},k')} F_{(i_1,k_1)} \cdots F_{(i_c,k_c)} \cdot \ol{v}_\la \in L(\la) 
\quad 
( r_{(i_1,k_1), \cdots, (i_c,k_c)} \in F) 
\]
such that 
$E_{(i,k)} \cdot v'=0$ for any $(i,k) \in \vG'_{\Fg}(\Bm)$, 
where 
 the summation runs 
\[
\{((i_1,k_1), \cdots, (i_c,k_c)) \in (\vG'_{\Fg}(\Bm))^c \,|\, \a_{(i_1, k_1)} + \cdots + \a_{(i_c, k_c)} =\a\}
\]
for some $\a \in \bigoplus_{(i,k) \in \vG'_{\Fg}(\Bm)} \ZZ \a_{(i,k)}$. 
Namely 
$v'$ is a $U_q(\Fg)$-highest weight vector of highest weight $ \mu'= \la - \a - \a_{(m_{k'},k')}$. 
It is clear that 
$\zeta(\la) = \zeta (\la-\a)$. 
Since 
$E_{(m_{k},k)}$ 
($k \not= k'$) 
commute with 
$F_{(m_{k'},k')}$ and $F_{(j,l)}$ ($(j,l) \in \vG'_{\Fg}(\Bm)$), 
we have that 
$E_{(m_{k},k)} \cdot v'=0$ 
for any $k \in \{1,\cdots, r-1\} \setminus \{k' \}$. 
On the other hand,  
for $((i_1,k_1), \cdots, (i_c,k_c)) \in (\vG'_{\Fg}(\Bm))^c$ such that $\a_{(i_1, k_1)} + \cdots + \a_{(i_c, k_c)} =\a$, 
we have 
\begin{align}
&
\label{E_{m_{k'} k'} vanish}
E_{(m_{k'},k')} F_{(m_{k'},k')} F_{(i_1,k_1)} \cdots F_{(i_c,k_c)} \cdot \ol{v}_\la 
\\ \notag
&=\Big\{
	F_{(m_{k'},k')} E_{(m_{k'},k')}  
	+
	\Big( q^{(\la-\a)_{m_{k'}}^{(k')} - (\la-\a)_1^{(k'+1)}} 
			(q^{-1} \s_{(m_{k'},k')}^{\la -\a}  - q \s_{(1,k'+1)}^{\la -\a} ) 1_{\la -\a}  \Big)
	\Big\} \cdot 
	\\ \notag 
	&\qquad 
	1_{\la - \a}  
	F_{(i_1,k_1)} \cdots F_{(i_c,k_c)} \cdot \ol{v}_\la.
\end{align}
Note that $\zeta (\la- \a)= \zeta (\la)$, 
\eqref{E_{m_{k'} k'} vanish} together with Claim A  
implies that 
\begin{align*}
E_{(m_{k'},k')} F_{(m_{k'},k')} F_{(i_1,k_1)} \cdots F_{(i_c,k_c)} \cdot \ol{v}_\la 
=0.  
\end{align*}
Thus, we have 
\begin{align*}
E_{(m_{k'},k')} \cdot v' 
&= 
\sum r_{(i_1,k_1), \cdots, (i_c,k_c)}  E_{(m_{k'},k')} F_{(m_{k'},k')} F_{(i_1,k_1)} \cdots F_{(i_c,k_c)} \cdot \ol{v}_\la 
=0.
\end{align*} 
As a consequence, 
we have that 
$E_{(i,k)} \cdot v' = 0$ for any $(i,k) \in \vG'(\Bm)$, 
and this implies that 
$\Sc_{n,r} \cdot v'$ is a proper $\Sc_{n,r}$-submodule of $L(\la)$. 
However, 
this contradicts to the irreducibility  of  $L(\la)$ as $\Sc_{n,r}$-module. 
Thus, we have that 
$x_{\la\mu}=0$ for $\la,\mu \in \vL_{n,r}^+(\Bm)$ such that $\zeta (\la) \not= \zeta (\mu)$. 
Now we proved (\roi). 

Next we prove (\roii). 
Let $\He_{n,r}$ (resp. $\He'_{n,r}$) 
be the Ariki-Koike algebra over $F$ with parameters 
$q=1, Q_1= \cdots = Q_r=0$ 
(resp. $q=1, Q'_1= \cdots = Q'_r = Q' \not=0$), 
and 
$\Sc_{n,r}$ (resp. $\Sc_{n,r}'$) 
be the cyclotomic $q$-Schur algebra 
associated to 
$\He_{n,r}$ (resp. $\He'_{n,r}$). 
We denote by $T_0, T_1, \cdots, T_{n-1}$ 
(resp. $T'_0, T'_1, \cdots, T'_{n-1}$) 
the generators of $\He_{n,r}$ (resp. $\He'_{n,r}$) 
as in \ref{Definition Ariki-Koike}. 
Then 
we can check that 
there exists an isomorphism 
$\phi: \He_{n,r} \ra \He'_{n,r}$ 
such that 
$\phi(T_0) = T_0' -  Q'$ 
and 
$\phi(T_i)=T'_i$ ($1 \leq i \leq n-1$). 
We can also check that 
$M^\mu \cong M'^\mu$ for $\mu \in \vL_{n,r}(\Bm)$ under the isomorphism $\phi$,  
where 
$M^\mu$ (resp. $M'^\mu$) 
is the right $\He_{n,r}$-module  (resp. $\He'_{n,r}$-module) 
defined in \ref{Definition M^mu}. 
Thus, 
we have 
$\Sc_{n,r} \cong \Sc'_{n,r}$ 
as algebras. 
Then  
(\roi) implies (\roii) 
since 
$\ol{D}$ is the identity matrix when $q=1$ 
if $\operatorname{char} F=0$. 
\end{proof}

\remarks 

(\roi) 
In Theorem \ref{Theorem factorization decom matrix}, 
the matrix $B \cdot \ol{D}$ 
does not depend on the choice of parameters $Q_1,\cdots,Q_r$. 

(\roii) 
If $\Sc_{n,r}$ is semi-simple, 
both of $D$ and $\ol{D}$ are identity matrices.  
Thus, we have $B=X$. 

(\roiii) 
By  Theorem \ref{Theorem factorization decom matrix},  
for $\la,\mu \in \vL_{n,r}^+$, 
we have 
\begin{align*}
&
d_{\la\mu} + x_{\la \mu} 
= \sum_{\nu \in \vL_{n,r}^+} \b_{\la\nu} \ol{d}_{\nu\mu} - \sum_{\nu \in \vL_{n,r}^+ \atop \la > \nu > \mu } d_{\la\nu} x_{\nu\mu}. 
\end{align*}
Thus, 
we see that 
the matrix $B \cdot \ol{D}$ 
gives an upper bound of 
both $d_{\la\mu}$ and $x_{\la\mu}$.



\section{The Ariki-Koike algebra as a subalgebra of  $\Sc_{n,r}$} 
In this section, 
we consider the algebras 
over an commutative ring $R$ 
with parameters $q, Q_1, \cdots, Q_r \in R$ such that $q$ is invertible in $R$.   
Hence, we omit the subscript $R$ for the objects over $R$. 
\para 
For 
$\mu \in \vL_{n,r}(\Bm)$, 
put 
\begin{align*}
&
X^\mu_{\mu+\a_{(i,k)}} =\big\{ 1, s_{N+1}, (s_{N+1}, s_{N+2}), \cdots, (s_{N+1} s_{N+2} \cdots s_{N+\mu_{i+1}^{(k)}-1}) \big\}, 
\\
&
X^\mu_{\mu -\a_{(i,k)}} =\big\{ 1, s_{N-1}, (s_{N-1}, s_{N-2}), \cdots, (s_{N-1} s_{N-2} \cdots s_{N-\mu_{i}^{(k)} +1}) \big\}, 
\end{align*}
where 
$s_j=(j,j+1) \in \FS_n$ is the adjacent transposition, 
and 
$N= \sum_{l=1}^{k-1} |\mu^{(l)}| + \sum_{j=1}^i \mu_j^{(k)}$. 
Then, 
by \cite[Lemma 6.10, Proposition 7.7 and Theorem 7.16 (\roi)]{W},  
we have 
\begin{align}
&\label{generator in Sc 1}
1_\nu (m_\mu) = \d_{\mu, \nu} m_\mu, 
\\
& \label{generator in Sc 2}
e_{(i,k)} (m_\mu) = q^{- \mu_{i+1}^{(k)} +1 } m_{\mu + \a_{(i,k)}} \Big( \sum_{y \in X_{\mu+ \a_{(i,k)}}^{\mu}} q^{\ell(y)} T_y \Big), 
\\
& \label{generator in Sc 3}
f_{(i,k)} (m_\mu) = q^{- \mu_{i}^{(k)} +1 } m_{\mu - \a_{(i,k)}} h_{-(i,k)}^\mu \Big( \sum_{x \in X_{\mu - \a_{(i,k)}}^{\mu}} q^{\ell(x)} T_x \Big), 
\end{align}
where 
$h_{-(i,k)}^\mu = \begin{cases} 1 & (i \not= m_k), \\ L_N - Q_{k+1} & (i=m_k) \,\, (N:=|\mu^{(1)}| + \cdots + |\mu^{(k)}|). \end{cases}$

\para 
Put  
$\w =(\emptyset, \cdots, \emptyset, (1^n)) \in \vL_{n,r}^+(\Bm)$. 
Then, 
it is well known that 
$M^\w \cong \He_{n,r}$ as right $\He_{n,r}$-modules, 
and that 
$1_\w \Sc_{n,r} 1_\w = \End_{\He_{n,r}} (M^\w, M^\w) \cong \He_{n,r}$ 
as $R$-algebras. 
Put 
$C_0 = 1_{\w} f_{(m_{r-1}, r-1)} e_{(m_{r-1},r-1)} 1_\w$, 
$C_i = 1_{\w} f_{(i,r)} e_{(i,r)} 1_\w \in \Sc_{n,r}$ for $i=1,\cdots,n-1$.
Then, we can realize $\He_{n,r}$ as a subalgebra of $\Sc_{n,r}$ as the following proposition.

\begin{prop}\
\label{Proposition AK in Sc}
\begin{enumerate}
\item 
The subalgebra of $\Sc_{n,r}$ 
generated by 
$C_{0}, C_1, \cdots, C_{n-1}$ 
is isomorphic to the Ariki-Koike algebra $\He_{n,r}$. 
Moreover, 
the subalgebra of $\Sc_{n,r}$ 
generated by 
$ C_1, \cdots, C_{n-1}$ 
is isomorphic to the Iwahori-Hecke algebra $\He_{n}$ of symmetric group $\FS_n$. 

\item 
Under the isomorphism 
$1_\w \Sc_{n,r} 1_\w \cong \He_{n,r}$, 
we have 
$T_0 =C_0 + Q_r 1_\w$, 
$T_i = C_i - q^{-1}1_\w$. 

\end{enumerate}
\end{prop}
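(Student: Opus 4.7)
The plan is to first establish (ii) by direct computation of the actions of $C_0, C_1, \ldots, C_{n-1}$ on the generator $m_\w$ of $M^\w$ via the explicit formulas \eqref{generator in Sc 1}--\eqref{generator in Sc 3}, and then to deduce (i) as an immediate consequence.

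First, since $\w = (\emptyset,\ldots,\emptyset,(1^n))$ satisfies $\FS_\w = \{1\}$ and $a_k = 0$ for every $k$, we have $m_\w = 1 \in \He_{n,r}$, and hence $M^\w \cong \He_{n,r}$ as right $\He_{n,r}$-modules, where the isomorphism sends $m_\w \cdot h$ to $h$. Consequently, under the isomorphism $1_\w \Sc_{n,r} 1_\w = \End_{\He_{n,r}}(M^\w) \isom \He_{n,r}$, an element $x \in 1_\w \Sc_{n,r} 1_\w$ corresponds to $x(m_\w) \in \He_{n,r}$ and $1_\w$ corresponds to $1$. It therefore suffices to verify that $C_i(m_\w) = T_i + q^{-1}$ for $1 \leq i \leq n-1$ and $C_0(m_\w) = T_0 - Q_r$.

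For $1 \leq i \leq n-1$, I apply \eqref{generator in Sc 2} with $\mu = \w$ and $(i,k) = (i,r)$: since $\w^{(r)}_{i+1} = 1$, the set $X^\w_{\w + \a_{(i,r)}}$ collapses to $\{1\}$, giving $e_{(i,r)}(m_\w) = m_{\w + \a_{(i,r)}}$. Next, applying \eqref{generator in Sc 3} with $\mu' = \w + \a_{(i,r)}$ and the same $(i,r)$: since $i \neq m_r$ (we use $m_r \geq n$) the factor $h^{\mu'}_{-(i,r)}$ equals $1$, while $(\mu')_i^{(r)} = 2$ and the associated index is $N = i+1$, so $X^{\mu'}_{\mu' - \a_{(i,r)}} = \{1, s_i\}$. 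Combining and using $m_\w = 1$ yields $C_i(m_\w) = q^{-1}(1 + q T_i) = q^{-1} + T_i$. The computation for $C_0$ follows the same template with $(i,k) = (m_{r-1},r-1)$, the essential difference being that now $i = m_k$: both the $e$- and $f$-steps again contribute only the trivial sum $\{1\}$ because $\w_1^{(r)} = 1$ and $(\mu')_{m_{r-1}}^{(r-1)} = 1$, but the factor $h^{\mu'}_{-(m_{r-1},r-1)} = L_N - Q_r$ now appears with $N = |(\mu')^{(r-1)}| = 1$, giving $h = L_1 - Q_r = T_0 - Q_r$. Hence $C_0(m_\w) = T_0 - Q_r$, which completes (ii).

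For (i), note that $1_\w C_j = C_j 1_\w = C_j$ for each $j$, so the unital subalgebra of $\Sc_{n,r}$ generated by $C_0, C_1, \ldots, C_{n-1}$ lies inside $1_\w \Sc_{n,r} 1_\w$ with $1_\w$ as its identity. Under the isomorphism of (ii) it corresponds to the unital subalgebra of $\He_{n,r}$ generated by $T_0 - Q_r, T_1 + q^{-1}, \ldots, T_{n-1} + q^{-1}$, which obviously coincides with the subalgebra generated by $T_0, T_1, \ldots, T_{n-1}$, namely all of $\He_{n,r}$. Omitting $C_0$ yields the subalgebra $\He_n$ in exactly the same way. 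The main delicate point in this plan is the careful bookkeeping of the indices $N$ and of the sets $X^\mu_{\mu \pm \a_{(i,k)}}$, especially in distinguishing the case $i = m_k$ (where the extra factor $L_N - Q_{k+1}$ arises) from the generic case.
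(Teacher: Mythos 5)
Your proposal is correct and follows essentially the same route as the paper: identify $1_\w\Sc_{n,r}1_\w\cong\He_{n,r}$ via $\vf\mapsto\vf(m_\w)$ with $m_\w=1$, compute $C_0(m_\w)=T_0-Q_r$ and $C_i(m_\w)=T_i+q^{-1}$ from \eqref{generator in Sc 1}--\eqref{generator in Sc 3}, and deduce both parts. You simply carry out the index bookkeeping (the sets $X^\mu_{\mu\pm\a_{(i,k)}}$, the values of $N$, and the case $i=m_k$) more explicitly than the paper, which states the outcome of that computation directly.
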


\begin{proof}
It is clear that 
$C_{0}, C_1, \cdots, C_{n-1}$ 
are elements of $1_\w \Sc_{n,r} 1_\w$. 
We remark that 
the isomorphism 
$\End_{\He_{n,r}} (M^\w, M^\w) \cong \He_{n,r}$ 
is given by 
$\vf \mapsto \vf(m_\w)$ (note that $m_\w =1$). 
Moreover, 
by \eqref{generator in Sc 1} - \eqref{generator in Sc 3}, 
we have 
\begin{align*}
C_0(m_\w) 
&= 
1_{\w} f_{(m_{r-1}, r-1)} e_{(m_{r-1},r-1)} 1_\w (m_\w) 
\\
&= m_{\w} (L_1 - Q_r). 
\end{align*}
Since $m_\w =1$ and $L_1=T_0$, 
we have $C_0 (m_\w) = T_0 -Q_r$. 
Similarly, 
we have 
$C_i(m_\w) = T_i + q^{-1}$ 
for $i=1,\cdots,n-1$.  
Thus, 
$\He_{n,r}$ 
is generated by 
$C_0, C_1,\cdots,C_{n-1}$ 
under the isomorphism $1_\w \Sc_{n,r} 1_\w \cong \He_{n,r}$, 
and 
$\He_n$ 
is generated by 
$C_1,\cdots,C_{n-1}$. 
Now, (\roii) is clear. 
\end{proof}

\para 
Let 
$\CF=\Hom_{\Sc_{n,r}} (\Sc_{n,r} 1_\w, -) : \Sc_{n,r} \cmod \ra \He_{n,r} \cmod$ 
be the Schur functor. 
It is well known that, for $M \in \Sc_{n,r}\cmod$, 
$\CF(M) = 1_\w M$  under the isomorphism $1_\w \Sc_{n,r} 1_\w  \cong \He_{n,r}$. 
It is also known that 
$\{1_\w L(\la) \not=0 \,|\, \la \in \vL_{n,r}^+(\Bm)\}$ 
gives a complete set of non-isomorphic simple $\He_{n,r}$-modules when $R$ is a field. 

Let $e$ be the smallest positive integer such that  
$1+ (q^2) + (q^2)^2 + \cdots (q^2)^{e-1}=0$. 
We say that 
a partition (not multi-partition) $\la=(\la_1, \la_2,\cdots)$ 
is $e$-restricted if $\la_i-\la_{i+1} < e$ 
for any $i \geq 1$.

As a corollary of  Corollary \ref{Corollary Q_1=Q_2= cdots =Q_r=0}, 
we have the following classification of simple $\He_{n,r}$-modules for special parameters.  
We remark that this classification has already proved by \cite[Theorem 1.6]{AM} and \cite[Theorem 3.7]{M} in other methods. 

\begin{cor}
\label{Corollary simple AK}
Assume that $R$ is a field, 
and that 
either 
$Q_1=Q_2=\cdots =Q_r=0$ 
or 
$q=1$, $Q_1=Q_2=\cdots=Q_r$.
Then 
$1_\w L(\la) \not=0$ 
if and only if 
$\la^{(k)}=\emptyset$ for $k<r$ and $\la^{(r)}$ is an $e$-restricted partition. 

\end{cor}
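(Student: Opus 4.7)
The plan is to translate the nonvanishing of $1_\w L(\la)$ into a question about a weight space in a tensor product of simple $U_q(\Fgl_{m_k})$-modules, and then reduce to the classical type $A$ classification of simple Hecke modules.

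First, I would invoke Corollary \ref{Corollary Q_1=Q_2= cdots =Q_r=0} to conclude that under either choice of parameters the matrix $X=(x_{\la\mu})$ is the identity. Since $x_{\la\mu}$ records the multiplicity of $L(\mu^{(1)})\boxtimes\cdots\boxtimes L(\mu^{(r)})$ in the composition series of $L(\la)$ as a $U_q(\Fg)$-module through $\Phi_\Fg$, this forces a single Jordan--H\"older factor of multiplicity one in $L(\la)$, whence
\[
L(\la)\ \cong\ L(\la^{(1)})\boxtimes\cdots\boxtimes L(\la^{(r)})
\]
as $U_q(\Fg)$-modules. Because the $\Sc_{n,r}$-weight decomposition agrees with the $U_q(\Fg)$-weight decomposition through $\Phi_\Fg$, the space $1_\w L(\la)$ coincides with the $\w$-weight space of the right-hand side, which by the tensor product rule factors as
\[
L(\la^{(1)})_\emptyset\boxtimes\cdots\boxtimes L(\la^{(r-1)})_\emptyset\boxtimes L(\la^{(r)})_{(1^n)}.
\]

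Second, I would analyse each tensor factor. For $k<r$ the weight $\emptyset$ has total size $0$, so $L(\la^{(k)})_\emptyset\neq 0$ forces $|\la^{(k)}|=0$, i.e.\ $\la^{(k)}=\emptyset$; conversely, in that case $L(\la^{(k)})$ is the trivial module with a one-dimensional $\emptyset$-weight space. Nonvanishing of the first $r-1$ factors is therefore equivalent to $\la^{(k)}=\emptyset$ for all $k<r$, which also forces $|\la^{(r)}|=n$. For the surviving factor, Lemma \ref{Lemma image Phi_g} identifies $L(\la^{(r)})$ with the simple head of a Weyl module for the ordinary $q$-Schur algebra $\Sc_{n,1}(\vL_{n,1}(m_r))$, and the classical type $A$ result then gives that the weight space $L(\la^{(r)})_{(1^n)}$, namely the image of $L(\la^{(r)})$ under the usual Schur functor to $\He_n\cmod$, is nonzero if and only if $\la^{(r)}$ is $e$-restricted in the sense of the definition preceding the statement.

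The main obstacle is the first step: promoting the Grothendieck-group identity $X=I$ to a genuine $U_q(\Fg)$-module isomorphism, and then carefully matching the $\Sc_{n,r}$-weight space $1_\w L(\la)$ with the $U_q(\Fg)$-weight space of the tensor product. Once this identification is in hand, step two is immediate from the definitions, and step three is a direct application of the classical $q$-Schur-to-Hecke correspondence in type $A$. Note that the characteristic/semisimplicity subtlety appearing in Corollary \ref{Corollary Q_1=Q_2= cdots =Q_r=0}(ii) does not enter here, because only the identification $X=I$ is needed, not the stronger relation $D=B$.
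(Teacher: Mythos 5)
Your argument is correct and follows essentially the same route as the paper: both use Corollary \ref{Corollary Q_1=Q_2= cdots =Q_r=0} to get $X=I$, hence $L(\la)\cong L(\la^{(1)})\boxtimes\cdots\boxtimes L(\la^{(r)})$ as $U_q(\Fg)$-modules, compare $\zeta$-weights to force $\la^{(k)}=\emptyset$ for $k<r$, and then invoke the classical Dipper--James criterion that $L(\la^{(r)})_{(1^n)}\neq 0$ iff $\la^{(r)}$ is $e$-restricted. The "obstacle" you flag is harmless, since $X=I$ means $L(\la)$ has a single $U_q(\Fg)$-composition factor of multiplicity one (and in any case weight-space dimensions only depend on the class in the Grothendieck group).
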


\begin{proof}
By Corollary \ref{Corollary Q_1=Q_2= cdots =Q_r=0}, 
we have that 
$1_\mu L(\la) \not=0$ only if 
$\zeta(\mu) = \zeta (\la)$. 
In particular, 
we have that 
$\la^{(k)}=0$ for any $k <r$ 
if 
$1_\w L(\la) \not=0$. 
On the other hand, 
$L(\la) \cong L(\la^{(1)}) \boxtimes \cdots \boxtimes L(\la^{(r)})$ 
as $U_q(\Fg)$-modules by Corollary \ref{Corollary Q_1=Q_2= cdots =Q_r=0}. 
In particular, 
when $\la^{(k)} = \emptyset$ for any $k <r$, 
we have that 
$L(\la) \cong L(\la^{(r)})$ as $U_q(\Fgl_{m_r})$-modules. 
Moreover,  it is well known that 
$1_\w L(\la^{(r)}) \not=0$ if and only if 
$\la^{(r)}$ is an $e$-restricted partition 
(\cite[Theorem 6.3, 6.8]{DJ}). 
These results imply the corollary. 
\end{proof}





\end{document}